\newtheorem{theorem}{Theorem}[section]
\newtheorem{proposition}{Proposition}[section]
\newtheorem{corollary}{Corollary} [section]
\newtheorem{lemma}[theorem]{Lemma}
\theoremstyle{remark}
\newcommand{\eps}{\varepsilon}
\newcommand{\diverg}{\textup{div}\,}
\newcommand{\grad}{\textup{grad}\,}
\newcommand{\Z}{\mathbb{Z}}
\newcommand{\R}{\mathbb{R}}
\newcommand{\cC}{\mathbb{C}}
\newcommand{\Sp}{\mathbb{S}^2}
\newcommand{\sS}{\mathbb{S}}
\newcommand{\curl}{\textup{curl}\,}
\newcommand{\dist}{\textup{dist}}
\newcommand{\m}{\boldsymbol{m}}
\newcommand{\V}{\boldsymbol{V}}
\newcommand{\X}{\boldsymbol{X}}
\newcommand{\F}{\boldsymbol{F}}
\newcommand{\gG}{\boldsymbol{G}}
\newcommand{\Heff}{\boldsymbol{H}}
\newcommand{\vv}{\boldsymbol{v}}
\newcommand{\w}{\boldsymbol{w}}
\newcommand{\Id}{\textup{Id\,}}
\newcommand{\vortexvector }{\boldsymbol{a}}
\newcommand{\dd}{\boldsymbol{d}}
\newcommand{\xxi}{\boldsymbol{\xi}}
\newcommand{\jj}{\boldsymbol{j}}
\newcommand{\pp}{\boldsymbol{p}}
\newcommand{\eeta}{\boldsymbol{\eta}}
\newcommand{\nnu}{\boldsymbol{\nu}}
\newcommand{\pphi}{\boldsymbol{\phi}}
\newcommand{\vvarphi}{\boldsymbol{\varphi}}
\newcommand{\rR}{\boldsymbol{R}}
\newcommand{\Q}{\boldsymbol{Q}}
\title{Motion of the Ginzburg-Landau vortices for the mixed flow with convective forcing}
\author{Olga Chugreeva}
\address{Lehrstuhl I f\"{u}r Mathematik\\
RWTH Aachen University\\
Pontdriesch 14-16 52056 Aachen Germany}
\email{olga@math1.rwth-aachen.de}
\subjclass[2010]{Primary: 35B20, Secondary: 35Q56, 35B40}
\begin{document}
\maketitle
\begin{abstract}   
We consider the mixed Ginzburg-Landau flow that is supplemented with convective derivatives of the unknown function. We show that the associated vortex motion law is the mixed flow of the renormalized energy with new nonlinear forcing terms. These terms are uniquely determined by the extra terms in the initial PDE. Our proof relies on the assumption that the initial data are close to optimal.
\end{abstract}

\section{Introduction}\label{Intro}
We study the mixed Ginzburg-Landau flow that contains additional convective terms. The equation reads
\begin{equation}\label{GLConvectionDet}
(\lambda_{\eps}+i)\partial_{t} u_{\eps}+k_{\eps}(\F\cdot\nabla) u_{\eps}+(\gG\cdot\nabla) (iu_{\eps})=\Delta u_{\eps}+\frac{1}{\eps^{2}}(1-|u_{\eps}|^{2})u_{\eps}.
\end{equation}
The function $u_{\eps}$ is complex-valued. It is defined on a smooth, bounded, simply connected domain $D\subset \R^{2}$. The vector fields $\F(x,t)$ and $\gG(x,t)$ are smooth in both variables. The convective derivative $(\F\cdot\nabla) u_{\eps}$ is a two-dimensional vector field with the components $(\F,\nabla u_{\eps}^{j})$ for $j=1, 2$. The parameter $\eps$ is positive. The scaling factors are $k_{\eps}=1/|\log\eps|$ and $\lambda_{\eps}=\lambda_{0}\cdot k_{\eps}$, with $\lambda_{0}$ positive. We complement \eqref{GLConvectionDet} with either Dirichlet or zero Neumann boundary conditions. 

We are interested in the asymptotic behavior of the solutions to \eqref{GLConvectionDet} when $\eps$ goes to zero. The key phenomenon is the emergence of point singularities of the solution, the so-called \textit{vortices}. The behavior of $u_{\eps}$ is effectively reduced to the motion of the vortices. Our main result is the identification of the vortex equation that arises from \eqref{GLConvectionDet}. We show that the vortices obey the system of ODEs 
\begin{equation}\label{GLDetVortexMotionIntro}
(\lambda_{0}+i)\dot{a}_{k}-\F(a_{k}(t), t)-d_{k}i\gG(a_{k}(t), t)=-\frac{1}{\pi}\partial_{a_{k}}W(\vortexvector(t)).
\end{equation}
Here, $a_{k}$ is the position of the $k$th vortex and $d_{k}$ is the corresponding \textit{degree} - the topological charge carried by the vortex. The renormalized energy $W(\vortexvector)$ depends on the overall vortex configuration $\vortexvector$ and encodes the interaction between the vortices. We note that the external vector fields $\F$ and $\gG$ are transferred from the initial PDE into the effective motion law. 

The equation \eqref{GLConvectionDet} is the natural generalization of the evolution equations based on the Ginzburg-Landau energy functional 
\begin{equation}\label{GLEnergy}
E_{\eps}(u):=\int\limits_{D}\frac{1}{2}|\nabla u|^{2}+\frac{1}{4\eps^{2}}(1-|u|^{2})^{2}\, dx. 
\end{equation}
The expression on the right-hand side of \eqref{GLConvectionDet} is the negative $L^{2}$ gradient of $E_{\eps}(u_{\eps})$. The functional $E_{\eps}(u)$ is the reduced version of the energy functional that is used to describe superconductivity and superfluidity. In this regard, the vortices of $u_{\eps}$ provide a toy model of the Abrikosov vortices that are observed in type-II superconductors.

The vortex dynamics associated to various flows of the Ginzburg-Landau energy has been studied since early 1990es. It is well understood by now. The vortex motion law for the Ginzburg-Landau-Schr\"odinger equation (Hamiltonian flow of $E_{\eps}(u_{\eps})$) has been first derived by Neu \cite{Neu}. The first result of this kind for the heat (gradient) flow of $E_{\eps}(u_{\eps})$ is due to E \cite{E}. Both authors used the method of matched asymptotics. More rigorous proofs for the heat flow have been given by Lin \cite{LinHeat}, Jerrard-Soner \cite{JS_Dynamics} and Sandier-Serfaty \cite{SSGamma}. The Schr\"odinger flow has been further investigated by Colliander-Jerrard \cite{CollianderJerrard} and Lin-Xin \cite{LinXinIncompressible}. The equation on vortices for the mixed Ginzburg-Landau flow, i.e., for \eqref{GLConvectionDet} with $\F=\gG=0$, has been established by Kurzke-Melcher-Moser-Spirn \cite{KMMS} and by Miot \cite{Miot}. In all the cases, the vortex dynamics is governed by the flow of the renormalized energy~$W$. Depending on the original PDE, the flow of~$W$ is again gradient, Hamiltonian or mixed.

When adding non-variational terms to a Ginzburg-Landau equation, we are interested in perturbations that modify the vortex dynamics without destroying it. Suppose that we start with a forced flow of~$E_{\eps}(u_{\eps})$. It should produce an equation on vortices that itself is a forced flow of~$W$. We show that the forcing with convective derivatives satisfies these requirements. The requirements are further motivated by the prospect of studying \textit{stochastic} Ginzburg-Landau equations. In the stochastic setting, we are concerned with the effect of a \textit{random} external field on the vortex dynamics. First results in this direction are presented in our work \cite{MeTightness}.

The coefficient~$k_{\eps}$ in \eqref{GLConvectionDet} singles out the correct time scale for the vortex motion and the correct amplitude for the external fields. The slow motion of the vortices is characteristic for the Ginzburg-Landau heat flow. The Ginzburg-Landau-Schr\"odinger equation does not need the rescaling. We have to damp the $\F$ term, but not the $\gG$ term, to see the effect of both fields on the vortex motion. This hints that the term $k_{\eps}(\F\cdot\nabla) u_{\eps}$ is related to the damped part of the time-derivative, $\lambda_{\eps}\partial_{t} u_{\eps}$, and thus to the heat flow. In the same way, the term $(\gG\cdot\nabla) (iu_{\eps})$ is related to the Schr\"odinger flow. If we replace in \eqref{GLConvectionDet} the usual time-derivatives with the material derivatives, we arrive at the equation 
\begin{equation*}
\lambda_{\eps}D_{\F}u+ iD_{\gG}u=\Delta u_{\eps}+\frac{1}{\eps^{2}}(1-|u_{\eps}|^{2})u_{\eps}.
\end{equation*}
For an arbitrary vector field $\Heff$, $D_{\Heff}u:=\partial_{t} u+(\Heff\cdot\nabla) u$ is the material derivative defined by $\Heff$. From this point of view, \eqref{GLConvectionDet} is again a combination of the heat and the Schr\"odinger  flow. The scaling on the left-hand side of \eqref{GLConvectionDet} simply reproduces the scaling in the mixed Ginzburg-Landau flow without forcing. This leads to the richest possible vortex dynamics. The equation \eqref{GLDetVortexMotionIntro} takes into account both the interaction between the vortices and the impact of the external fields $\F$ and $\gG$. If the $\F$ term in \eqref{GLConvectionDet} is damped stronger than by $k_{\eps}$, it does not affect the vortex motion. Similarly, if the $\gG$ term is damped in any way at all, it does not appear in \eqref{GLDetVortexMotionIntro}. 

Another reason to study \eqref{GLConvectionDet} is the formal analogy between the mixed Ginzburg-Landau flow and the Landau-Lifshitz-Gilbert equation from the theory of ferromagnetism. The closest counterpart of \eqref{GLConvectionDet} is a particular variant of the latter, the Landau-Lifshitz-Slonczewski equation. It describes the evolution of the magnetization direction $\m_{\eps}:D\to\Sp$ in the presence of an external spin-current $\vv$.  In this context, $D$ represents a thin magnetic sample. The equation reads 
\begin{equation}\label{Landau-Lifshitz-Slonczewski}
\partial_{t}\m_{\eps}+(\vv\cdot\nabla) \m_{\eps}+\beta\m_{\eps}\times(\vv\cdot\nabla) \m_{\eps}=\m_{\eps}\times(\lambda_{\eps} \partial_{t}\m_{\eps}+\nabla_{L^{2}} E_{\eps}(\m_{\eps})).
\end{equation}
The basic mathematical properties of \eqref{Landau-Lifshitz-Slonczewski} have been scrutinized in \cite{MelcherPtashnyk}.

In certain situations, the magnetic energy $E_{\eps}(\m_{\eps})$ is modeled by a functional similar to the Ginzburg-Landau energy. The functional is given by
\[E_{\eps}(\m)=\int\limits_{D}\frac{1}{2}|\nabla \m|^{2}+\frac{1}{2\eps^{2}}m_{3}^{2}\, dx.
\]
Here, $m_{3}$ is the third component of the vector $\m$. For a more detailed discussion, see Section 7 in \cite{KMMS_GL_LLG}.

We can formally link equation \eqref{Landau-Lifshitz-Slonczewski} to \eqref{GLConvectionDet} in the following way. We project \eqref{Landau-Lifshitz-Slonczewski} on the tangent plane $T_{\m_{\eps}}\Sp$ that is identified with $\cC$. Then, the vector multiplication by $\m_{\eps}$ becomes the multiplication by $i$. This transforms equation \eqref{Landau-Lifshitz-Slonczewski} into equation \eqref{GLConvectionDet} with $\F=\gG=\vv$ and $k_{\eps}=\beta$. 

The solutions to \eqref{Landau-Lifshitz-Slonczewski} can develop point singularities similar to the Ginzburg-Landau vortices. These magnetic vortices move according to the Thiele equation, which is analogous to \eqref{GLDetVortexMotionIntro}. For the case $\vv=0$, the equation has been known since the works of Thiele \cite{thiele1973steady} and Huber \cite{huber1982dynamics}. It has been rigorously derived in \cite{KMMS_GL_LLG} and \cite{KMMSExcess}. The passage to the vortex motion law for \eqref{Landau-Lifshitz-Slonczewski} has been justified in \cite{KMM_Spin}. The role of the field $\vv$ is essentially the same as that of the fields $\F$ and $\gG$ in \eqref{GLDetVortexMotionIntro}. Our conclusions are therefore the Ginzburg-Landau counterpart of the results in \cite{KMM_Spin}. 

The study of the Landau-Lifshitz-Slonczewski equation is, however, more delicate than that of the Ginzburg-Landau equation. The main problem is that the solutions of the former equation are not necessarily globally smooth for a positive $\eps$. A magnetic vortex, which reflects the behavior of the whole family $(\m_{\eps})$, is not the only type of singularity. Each function $\m_{\eps}$ could independently nucleate the so-called harmonic bubbles. The question, whether the formation of bubbles really occurs in the Landau-Lifshitz equations, is open. In \cite{KMM_Spin}, the authors have to preclude the possible bubbling. Consequently, the conditions on the field $\vv$ and on the degrees of the magnetic vortices are quite strict. We do not need to adopt these assumptions for \eqref{GLConvectionDet}. We allow the degrees of both signs and we do not impose any restrictions on the structure of the external fields. Since we are not constrained by the physical considerations, the fields $\F$ and $\gG$ are unrelated to each other.

While treating the singularities of \eqref{GLConvectionDet}, our overall strategy is the same as in the works \cite{KMMS} and \cite{KMM_Spin}. We define the vortex positions as the concentration points of the Jacobian of $u_{\eps}$. To identify the vortex motion law, we use the evolution equation for the Jacobian in combination with the estimates on the energy excess. 
Both major parts of the argument have to be adjusted to the situation when the equation contains forcing terms. 

The concentration properties of the Jacobian are tied to the Ginzburg-Landau energy of the solution. The behavior of the energy is the most complicated aspect of \eqref{GLConvectionDet}. The functional $E_{\eps}(u_{\eps})$ can increase with time because the convective terms bring additional amount of energy into the system. This situation is new compared to the evolution Ginzburg-Landau equations. For them, the energy either decreases (heat and mixed flows) or is conserved (Schr\"odinger flow). 
We show that the energy can not increase too strongly. This requires a careful analysis of the impact that the convective terms have on the energy evolution. We find that the roles of the terms driven by $\F$ and $\gG$ are quite different. To quantify the effect of the first of them, it is sufficient to apply the standard integral inequalities. For the second, the usual tricks do not provide satisfactory estimates. We rather make use of the precise structure of the emerging terms.  
 
We establish the vortex motion law by, roughly speaking, passing to the limit in the evolution equation for the Jacobian. The equation includes terms that depend on the external fields. We have to comprehend the limiting behavior of these new quantities. To this end, we show that the stress tensor and the momentum of $u_{\eps}$ concentrate at the vortex cores. These facts are not well-known, but they follow readily from the concentration properties of the Jacobian. Very similar results have been used in \cite{KMM_Spin} for the Landau-Lifshitz-Slonczewski equation. When we pass to the limit, we use, at a certain point, the estimates in terms of the energy excess. For this reason, we have to start with the well-prepared initial data. The energy of our initial data is approximately minimal among all functions with the given vortex configuration.   

In \cite{TiceBoundaryCurrent} and \cite{Serfaty_Tice_pinning}, equations similar to \eqref{GLConvectionDet} appear in the context of the physically complete gauge-invariant Ginzburg-Landau theory. The equations stem from a special gauge transformation in a Ginzburg-Landau equation with magnetic field. The initial equation is, in addition, coupled with a boundary current. Exactly the boundary current produces the extra convective terms. The authors derive the effective motion law for these equations, but in regimes different from ours. In \cite{Serfaty_Tice_pinning}, the vortex dynamics is mixed, like in \eqref{GLDetVortexMotionIntro}. Still, due to the scaling, it is driven by the external vector fields alone (no $\partial_{a_{k}}W$ in the equation). The work \cite{TiceBoundaryCurrent} treats the perturbed Ginzburg-Landau heat flow. In both cases, physics prescribes Neumann boundary conditions and some structural features of the external vector fields. In particular, the fields do not depend on time. 

The article is organized as follows. Section~2 contains all preliminary information on the notation and on the Ginzburg-Landau vortices. In the same section, we formulate our theorem and comment on the existence and regularity of the solution to \eqref{GLConvectionDet}.
Section~\ref{sec:GLDetEnEst} is dedicated to the analysis of the Ginzburg-Landau energy and of the kinetic energy of the solutions. In Section~\ref{sec:GLDetJacobian}, we obtain the existence and continuity of the vortex paths. Moreover, we establish the results on the concentration. It remains to show that the vortex trajectories satisfy \eqref{GLDetVortexMotionIntro}. We prove this via the Gronwall argument in Sections \ref{sec:EnExc} and~\ref{proof_complete}. 

\section{Preliminaries}
\subsection{General notation}

We distinguish between vectors in $\R^{n}$ for $n\geqslant 2$ and scalars, either in $\R$ or in $\cC\simeq \R^{2}$. The vectors are in bold. Scalars and vector components are in light. Still,  for $u\in \cC$ we write $u=(u^{1}, u^{2})$, if we view $u$ as a vector in $\R^{2}$. 

For $u, v\in \cC$ we use the real scalar product $(u, v):=u^{1}v^{2}+u^{2}v^{1}$.
The product of $u\in \cC$ with $i$ is the rotation by $\pi/2$: $iu:=(-u^{2}, u^{1})$. 
The ``vector product'' for $u, v\in \cC$ is given by $u\times v:=(iu, v)=u^{1}v^{2}-u^{2}v^{1}$. 

The matrix $\Id$ is the identity matrix of size two.
For two matrices $A=(a_{jk})$ and $B=(b_{jk})$, $A:B$ is their Frobenius product $A:B=a_{jk}b_{jk}$.

The set $B_{s}(x)$ is the closed ball of radius $s$ centered at $x\in \R^{2}$. We denote by $\vortexvector$ an $N$-tuple $(a_{1}, ... , a_{N})$ of distinct points belonging to the domain $D$. We set \[\rho_{\vortexvector}:=\min_{k\neq l}\{\tfrac{1}{2}|a_{k}-a_{l}|, \dist(a_{k},\partial D)\}.\]  


The operator $\nabla$ is reserved for the differentiation with respect to the spatial variables. For a function $\psi:D\to \R$, $\curl \psi$ is a vector field
$\curl \psi:=(-\partial_{2}\psi, \partial_{1}\psi).$
For a vector field $\pphi:D\to \R^{2}$,  
$\curl \pphi :=-\partial_{2}\phi^{1}+ \partial_{1}\phi^{2}$ is a scalar function. For a matrix $A=(a_{jk}(x))$, $\diverg A$ is the vector with the components $\diverg A^{k}=\partial_{j}a_{jk}$. For a scalar function $\psi$, $\grad \curl \psi$ is a $2\times 2$ matrix 
\[\grad \curl \psi=\begin{pmatrix}
-\partial_{12}\psi & -\partial_{22}\psi \\
\partial_{11}\psi  & \partial_{12}\psi 
\end{pmatrix}.
\]

The space $(C^{0, \alpha})^{*}$ is the dual of the space of $\alpha$-H\"older continuous functions on $D$. The space $(C_{c}^{0, \alpha})^{*}$ is the dual of the space of $\alpha$-H\"older continuous functions that are compactly supported in $D$. 
The spaces $(C^{0, \alpha}([0, T]\times D))^{*}$ and $(C_{c}^{0, \alpha}([0, T]\times D))^{*}$ are defined in the similar way. Sometimes we shall need the elements of these spaces, and sometimes - vectors composed of such elements. We do not distinguish the two situations in our notation. 

We write $q_{\eps}=o(1)$ for a family of positive quantities $q_{\eps}$, if $\lim_{\eps \to 0} q_{\eps}=0$. In the same manner, we write $q_{\eps}\ll r_{\eps}$, if $q_{\eps}/r_{\eps}=o(1)$.

Throughout the text, $C$ is a positive constant that does not depend on $\eps$. It may change from line to line.

\subsection{Quantities related to $u_{\eps}$}

For any sufficiently smooth function \newline $u\,:\R^{2}\to\cC$, the \textit{Jacobian} of $u$ is given by
\begin{equation}\label{def:Jacobian}
J(u):=\det(\nabla u).
\end{equation}
The Jacobian is an \textit{exact} $0$-form. We have that $J(u)=\tfrac{1}{2}\,\curl \jj(u)$, where the vector 
\begin{equation}\label{def:current}
\jj(u):=(u\times\nabla u)=(u\times \partial_{1} u, u\times \partial_{2} u)
\end{equation}
is the \textit{current} of $u$.
In the same vein, for $u=u(x, t):\R^{2}\times \R_{+}\to\cC$, its \textit{total current} 
is identified with the 1-form 
$\jj_{tl}=(u\times \partial_{t} u)\,dt+(u\times \partial_{1} u )\,dx_{1}+(u\times \partial_{2} u) \,dx_{2}.$ 
Accordingly, the \textit{total Jacobian} of $u$ is the exterior derivative of $\jj_{tl}(u)$.
In other words, 
$J_{tl}(u)=V^{1}\,dt\wedge dx_{1}+V^{2}\,dt\wedge dx_{2}+J(u)\,dx_{1}\wedge dx_{2}, 
$
where $J(u)$ is the usual Jacobian \eqref{def:Jacobian} and $V^{j}=\partial_{t}(u\times u_{j})-\partial_{j}(u\times u_{t})$. We write $(J(u), \V(u))$ for the three-dimensional vector representing $J_{tl}$. The two-dimensional vector $\V(u)=(V^{1}, V^{2})$ is the \textit{velocity of the Jacobian}. Since the form $J_{tl}$ is exact, $J(u)$ and $\V(u)$ satisfy the equation
\begin{equation}\label{JacobianRelation}
\partial_{t}J(u)-\curl \V(u)=0.
\end{equation}

The \textit{momentum} of $u$ is the vector 
$
\pp(u):=(\partial_{t}u, \nabla u)=((\partial_{t}u, \partial_{1} u), (\partial_{t}u, \partial_{2} u))
$.

The \textit{stress tensor} is a matrix $(\nabla u\otimes \nabla u)$ with the entries 
$(\nabla u\otimes \nabla u )_{jk}=(\partial_{j} u,\partial_{k} u)
$.

The \textit{energy density} of $u$ is the function
\begin{equation*}
e_{\eps}(u):=\frac{1}{2}|\nabla u|^{2}+\frac{1}{4\eps^{2}}(1-|u|^{2})^{2}.
\end{equation*}
It is the integrand in the Ginzburg-Landau energy \eqref{GLEnergy}. 
The \textit{rescaled energy density} of the solution to \eqref{GLConvectionDet} is given by
\begin{equation}\label{def:mu}
\mu_{\eps}(t):=k_{\eps}e_{\eps}(u_{\eps}(t)).
\end{equation}

\subsection{The vortices and the renormalized energy}\label{sec:RenEn}

The content of this section is standard and is taken from \cite{BBH} and \cite{JerrardSoner}. It mostly concerns the static case.

For a function $u:D\to\cC$, a \textit{vortex} is an isolated zero around which $u/|u|$ has a nontrivial winding number. The winding number is called \textit{degree}. For $\eps$ going to $0$, the limiting vortices of the family $(u_{\eps})$ are distinct points $a_k\in D$ with degrees $d_k\in \Z$. The vortices of high degrees are believed to be unstable. We restrict our discussion to the case $d_k\in\{\pm 1\}$. 

We complement our equation with either Dirichlet or zero Neumann boundary conditions. In the Dirichlet case 
\begin{equation}\label{GLDetBC}
u_{\eps}\big|_{\partial D}=g(x),
\end{equation}
the function $g\in C^{\infty}(\partial D,\sS^{1})$ has a nontrivial degree. This imposes a topological constraint on the solutions of \eqref{GLConvectionDet}. A function $u:D\to\cC$ that coincides with $g$ on $\partial D$ \textit{must} have vortices in the interior of $D$.
In the Neumann case 
\begin{equation}\label{GLDetBCNeumann}
\partial_{\nnu}u_{\eps}\big|_{\partial D}=0,
\end{equation}
we assume that the initial data does have vortices. Moreover, we always assume that the initial data 
\begin{equation}\label{GLDetIC}
u_{\eps}(x,0)=u_{\eps}^{0}(x)
\end{equation}
is smooth. 

A vortex carries an unbounded amount of energy that is proportional to $\log(1/\eps)$.
Therefore, the Ginzburg-Landau energy of a function with vortices diverges as $\eps$ goes to $0$.
The inequality
\begin{equation}\label{upper_bound}
E_\eps(u_\eps)\leqslant C \log(1/\eps)
\end{equation} 
provides a non-sharp upper bound on the actual number of vortices. If \eqref{upper_bound} holds, the Jacobians of $u_{\eps}$ are precompact in the strong topology of $(C^{0,\alpha})^{*}$ for every $\alpha>0$. They concentrate at the vortex sites \cite{JerrardSoner}:
\begin{equation*}
J(u_{\eps})\rightarrow \pi\sum\limits_{k=1}^{N}d_{k}\delta_{a_{k}} \text{ with } \pi\sum\limits_{k=1}^{N}|d_{k}|\leqslant C.
\end{equation*}

A \textit{vortex configuration} $(\vortexvector,\dd)$ is the collection of vortex positions $\vortexvector$ together with the collection of corresponding degrees $\dd\in (\pm 1)^{N}$. We say that a family $(u_\eps)$ \textit{exhibits the vortex configuration} $(\vortexvector,\dd)$, if $J(u_{\eps})\rightarrow \pi\sum_{k=1}^{N}d_{k}\delta_{a_{k}}.$

For a vortex configuration $(\vortexvector, \dd)$, the associated \textit{canonical harmonic map} $u_{*}^{(\vortexvector, \dd)}$ is an $\sS^{1}$-valued map that solves the system
\begin{equation}\label{CHM}
\left\{\begin{array}{l}
\diverg \jj(u_{*})=0,\\
\curl \jj(u_{*})=2\pi\sum_{k=1}^{N}d_{k}\delta_{a_{k}}.
\end{array}
\right. 
\end{equation}
The boundary conditions are $u_{*}=g$ in the Dirichlet case and $(\nnu, \jj(u_{*}))=0$ in the Neumann case. In the former situation, the canonical harmonic map is defined uniquely, in the latter - up to a constant. 

The \textit{renormalized energy} $W$ of the vortex configuration $(\vortexvector, \dd)$ is given by 
\begin{equation}
W(\vortexvector, \dd):=\lim\limits_{s\to 0}\Big(\int_{D\setminus \mathop{\cup}_{k=1}^{N}B_{s}(a_{k})}|\nabla u_{*}|^{2}\, dx-\pi\sum\limits_{k=1}^{N}d_{k}^{2}\log(1/s)\Big),
\end{equation}
with $u_{*}=u_{*}^{(\vortexvector, \dd)}$. We shall sometimes write $W(\vortexvector)$ instead of $W(\vortexvector, \dd)$, since in our situation the parameters $\dd$ are fixed. In both Dirichlet and Neumann case, the renormalized energy has the form
$W(\vortexvector)=-\pi\sum_{j\neq k}d_{k}d_{j}\log|a_{k}-a_{j}|+\textit{boundary terms}$. 
This shows that the vortices interact like charged particles, with the degree playing the role of the charge. The renormalized energy blows up if two vortices bump into each other or a vortex hits the boundary of $D$. 

We denote by $\partial_{a_{k}}W(\vortexvector)$ the vector $(\partial_{a_{k}^{1}}W(\vortexvector), \partial_{a_{k}^{2}}W(\vortexvector))$. The identity below links the stress tensor of the canonical harmonic map to the gradient of the renormalized energy.
\begin{proposition}[\cite{KMMS} Proposition 3.2; \cite{JerrardSpirnRefined}, Lemma 8]\label{prop:GradOfW}
Suppose that a smooth function $\varphi$ 
is affine in $B_{s}(a_{k})$ for some $0<s<\rho(\vortexvector)$ and a fixed $k\in\{1,..., N\}$. Suppose moreover that $\varphi=0$ in a neighborhood of $a_{j}$ for $j\neq k$. Let $u_{*}$ be the canonical harmonic map associated to $(\vortexvector, \dd)$. Then, 
there holds
\begin{equation}\label{GradientOfRenEn}
\int\limits_{D}(\grad \curl \varphi) :(\jj(u_{*})\otimes \jj(u_{*})) \,dx=-\curl \varphi(a_{k})\cdot \partial_{a_{k}}W(\vortexvector, \dd).
\end{equation}
\end{proposition}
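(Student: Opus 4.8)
The plan is to rewrite the left-hand side of \eqref{GradientOfRenEn} as a contour integral over a small circle around $a_{k}$ and then to identify its limit from the local structure of the canonical harmonic map. First I would record the relevant facts about $u_{*}=u_{*}^{(\vortexvector,\dd)}$: away from the points $a_{j}$ it is smooth and $\sS^{1}$-valued, so $|\nabla u_{*}|^{2}=|\jj(u_{*})|^{2}$ there, and by \eqref{CHM} the current $\jj:=\jj(u_{*})$ satisfies $\diverg\jj=0$ in $D$ and $\curl\jj=0$ in $D\setminus\{a_{1},\dots,a_{N}\}$. On a disc $B_{s}(a_{k})$ containing no other vortex, $\jj-d_{k}\nabla\theta_{k}$ (with $\theta_{k}(x)=\arg(x-a_{k})$) is a bounded curl-free and divergence-free field on a simply connected set, so $\jj=d_{k}\nabla\theta_{k}+\nabla\zeta_{k}$ for a real-valued harmonic function $\zeta_{k}$ on $B_{s}(a_{k})$. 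I would also use the classical Bethuel--Brezis--H\'elein identity $\partial_{a_{k}}W(\vortexvector,\dd)=2\pi d_{k}\,(\nabla\zeta_{k}(a_{k}))^{\perp}$, which expresses the gradient of the renormalized energy through the regular part $\zeta_{k}$ of the phase of $u_{*}$ at $a_{k}$ (here $v^{\perp}$ is the rotation of $v$ by $\pi/2$). Finally, since $\varphi$ is affine on $B_{s}(a_{k})$, vanishes near each $a_{j}$ with $j\neq k$, and — as holds for all test functions relevant below — vanishes near $\partial D$, the matrix $\grad\curl\varphi$ is identically zero near every vortex and $\curl\varphi$ vanishes near $\partial D$; in particular the integral on the left of \eqref{GradientOfRenEn} is well defined and supported away from the singularities.

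The core step is the pointwise identity, valid on $\Omega_{\sigma}:=D\setminus\bigcup_{j}B_{\sigma}(a_{j})$ for $0<\sigma<s$,
\[(\grad\curl\varphi):(\jj\otimes\jj)=\diverg\!\big[(\jj\otimes\jj-\tfrac12|\jj|^{2}\Id)\,\curl\varphi\big],\]
which one checks by expanding the right-hand side and using $\diverg\jj=0$, $\curl\jj=0$ on $\Omega_{\sigma}$, $\diverg\curl\varphi=0$, and the trace-freeness of $\grad\curl\varphi$. Integrating over $\Omega_{\sigma}$ and applying the divergence theorem, the $\partial D$-contribution drops out ($\curl\varphi=0$ there) and so do the contributions of $\partial B_{\sigma}(a_{j})$ with $j\neq k$; and because $\grad\curl\varphi\equiv 0$ on $B_{s}(a_{k})$, the integral on the left of \eqref{GradientOfRenEn} equals $\int_{\Omega_{\sigma}}(\grad\curl\varphi):(\jj\otimes\jj)\,dx$ for every such $\sigma$. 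With $\boldsymbol{n}=(x-a_{k})/\sigma$ the outward unit normal of $B_{\sigma}(a_{k})$, this gives
\[\int_{D}(\grad\curl\varphi):(\jj\otimes\jj)\,dx=-\int_{\partial B_{\sigma}(a_{k})}\!\big[(\jj\cdot\boldsymbol{n})(\jj\cdot\curl\varphi)-\tfrac12|\jj|^{2}(\curl\varphi\cdot\boldsymbol{n})\big]\,d\mathcal{H}^{1}\]
for every $\sigma\in(0,s)$; in particular the circle integral is independent of $\sigma$.

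It then remains to compute the circle integral in the limit $\sigma\to 0$. On $\partial B_{\sigma}(a_{k})$ one has $\curl\varphi\equiv\curl\varphi(a_{k})$ (as $\varphi$ is affine) and $\jj=\tfrac{d_{k}}{\sigma}\boldsymbol{n}^{\perp}+\nabla\zeta_{k}(a_{k})+O(\sigma)$. Substituting and expanding in $\sigma$: the $\sigma^{-2}$ term integrates to zero by parity, the remainder is $O(\sigma)$, and the surviving $\sigma^{-1}$ cross-terms are evaluated using $\int_{0}^{2\pi}\boldsymbol{n}\otimes\boldsymbol{n}\,d\vartheta=\pi\Id$. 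One obtains that the circle integral converges to $2\pi d_{k}\big((\nabla\zeta_{k}(a_{k}))^{\perp}\cdot\curl\varphi(a_{k})\big)$, so the left-hand side of \eqref{GradientOfRenEn} equals $-2\pi d_{k}\big((\nabla\zeta_{k}(a_{k}))^{\perp}\cdot\curl\varphi(a_{k})\big)$, which by the Bethuel--Brezis--H\'elein identity is precisely $-\curl\varphi(a_{k})\cdot\partial_{a_{k}}W(\vortexvector,\dd)$.

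There is essentially no analysis to do — $\grad\curl\varphi$ vanishes near all the singularities, so no concentration limits intervene — and the real difficulty is the bookkeeping: keeping track of the several $\pi/2$-rotations relating $\nabla u_{*}$, $\jj(u_{*})$, $\curl\varphi$ and $\partial_{a_{k}}W$, fixing the orientation of the contour consistently, and invoking the correctly normalised Bethuel--Brezis--H\'elein formula. The one point that must not be skipped is the vanishing of the $\partial D$-contribution, which uses that $\curl\varphi$ is zero near the boundary together with the boundary condition satisfied by $u_{*}$.
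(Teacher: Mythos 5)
Your argument is correct, and it is essentially the standard proof of this identity: the paper itself offers no proof (it imports the statement from \cite{KMMS}, Proposition 3.2 and \cite{JerrardSpirnRefined}, Lemma 8), and the route you take --- the pointwise identity $(\grad\curl\varphi):(\jj\otimes\jj)=\diverg\big[(\jj\otimes\jj-\tfrac12|\jj|^{2}\Id)\curl\varphi\big]$ valid where $\jj$ is divergence- and curl-free, followed by the divergence theorem on $D$ minus small discs and a residue computation at $a_{k}$ using $\jj=d_{k}\nabla\theta_{k}+\nabla\zeta_{k}$ --- is exactly the mechanism behind the cited results. I checked the algebra: the identity uses the symmetry and trace-freeness in the right places, the $\sigma^{-2}$ term vanishes by parity, the cross terms give $2\pi d_{k}(\nabla\zeta_{k}(a_{k}))^{\perp}\cdot\curl\varphi(a_{k})$, and the normalization $\partial_{a_{k}}W=2\pi d_{k}(\nabla\zeta_{k}(a_{k}))^{\perp}$ is consistent with the two-vortex interaction $-2\pi d_{j}d_{k}\log|a_{j}-a_{k}|$. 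The one point worth making explicit is the one you already flag: the vanishing of the $\partial D$ contribution genuinely requires an extra hypothesis ($\varphi$ vanishing near $\partial D$, or $\varphi\in W^{2,\infty}_{0}(D)$ as in \cite{KMMS}), which is absent from the statement as printed but satisfied by every test function to which the proposition is applied in this paper.
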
 

The Ginzburg-Landau energy and the renormalized energy are related as follows.
\begin{proposition}
Suppose that $(u_{\eps})$ exhibits a vortex configuration $(\vortexvector, \dd)$ with \linebreak$\dd\in(\pm 1)^N$. Then there exists a constant $\gamma>0$ such that
\begin{equation}\label{GammaConv}
\liminf\limits_{\eps\to 0}(E_{\eps}(u_{\eps})-\pi N \log(1/\eps))\geqslant N\gamma + W(\vortexvector, \dd).
\end{equation}
\end{proposition}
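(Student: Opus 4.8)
The plan is to obtain \eqref{GammaConv} as the $\liminf$ half of the $\Gamma$-expansion of $E_{\eps}$ at the order $\pi N\log(1/\eps)$, by splitting the energy into a near-vortex part that produces $N\gamma$ and the logarithmic divergence, and a far-field part that produces $W(\vortexvector,\dd)$. First I would dispose of the trivial case: if $\liminf_{\eps\to0}(E_{\eps}(u_{\eps})-\pi N\log(1/\eps))=+\infty$ there is nothing to prove, so I pass to a subsequence along which this quantity converges to a finite value $L$; along that subsequence $E_{\eps}(u_{\eps})\leqslant \pi N\log(1/\eps)+C$, in particular \eqref{upper_bound} holds, the Jacobians concentrate as in the hypothesis, and for each fixed $k$ and a.e.\ small $r$ one has $\deg(u_{\eps},\partial B_{r}(a_{k}))=d_{k}$ once $\eps$ is small.

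Next, fix $\sigma$ with $0<\sigma<\rho_{\vortexvector}$ and write $E_{\eps}(u_{\eps})=\sum_{k}\int_{B_{\sigma}(a_{k})}e_{\eps}(u_{\eps})\,dx+\int_{D\setminus\bigcup_{k}B_{\sigma}(a_{k})}e_{\eps}(u_{\eps})\,dx$. For each core I invoke the refined single-vortex lower bound of Bethuel--Brezis--H\'elein (also available through the vortex-ball constructions of Jerrard and of Sandier): since $|d_{k}|=1$,
\[
\int_{B_{\sigma}(a_{k})}e_{\eps}(u_{\eps})\,dx\ \geqslant\ \pi\log\frac{\sigma}{\eps}+\gamma-o_{\eps}(1)-o_{\sigma}(1),
\]
where $\gamma>0$ is the universal renormalized core energy of a degree-one vortex. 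Summing over $k$ gives $\sum_{k}\int_{B_{\sigma}(a_{k})}e_{\eps}(u_{\eps})\,dx\geqslant \pi N\log(\sigma/\eps)+N\gamma-o_{\eps}(1)-o_{\sigma}(1)$.

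For the far field, this core bound together with $E_{\eps}(u_{\eps})\leqslant\pi N\log(1/\eps)+C$ shows that $\int_{D\setminus\bigcup_{k}B_{\sigma}(a_{k})}e_{\eps}(u_{\eps})\,dx$ is bounded uniformly in $\eps$; hence $u_{\eps}$ is bounded in $H^{1}(D\setminus\bigcup_{k}B_{\sigma}(a_{k}))$ and, along a further subsequence, converges weakly to some $u_{\sigma}$. The potential term forces $|u_{\sigma}|\equiv1$, the Dirichlet (resp.\ Neumann) boundary condition is preserved, and $\deg(u_{\sigma},\partial B_{\sigma}(a_{k}))=d_{k}$ for a suitable choice of $\sigma$. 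By weak lower semicontinuity,
\[
\liminf_{\eps\to0}\int_{D\setminus\bigcup_{k}B_{\sigma}(a_{k})}\tfrac12|\nabla u_{\eps}|^{2}\,dx\ \geqslant\ \tfrac12\int_{D\setminus\bigcup_{k}B_{\sigma}(a_{k})}|\nabla u_{\sigma}|^{2}\,dx\ \geqslant\ m_{\sigma},
\]
where $m_{\sigma}$ is the minimum of $\tfrac12\int_{D\setminus\bigcup_{k}B_{\sigma}(a_{k})}|\nabla v|^{2}\,dx$ over all $\sS^{1}$-valued $v\in H^{1}$ with the prescribed boundary data and with $\deg(v,\partial B_{\sigma}(a_{k}))=d_{k}$. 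The minimizer is the canonical harmonic map of $(\vortexvector,\dd)$ viewed through holes of radius $\sigma$, and the classical Bethuel--Brezis--H\'elein expansion of the energy of a punctured domain yields $m_{\sigma}=\pi N\log(1/\sigma)+W(\vortexvector,\dd)+o_{\sigma}(1)$.

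Adding the two estimates gives $E_{\eps}(u_{\eps})\geqslant\pi N\log(1/\eps)+N\gamma+W(\vortexvector,\dd)-o_{\eps}(1)-o_{\sigma}(1)$ along the chosen subsequence; taking $\liminf_{\eps\to0}$ shows $L\geqslant N\gamma+W(\vortexvector,\dd)-o_{\sigma}(1)$, and letting $\sigma\to0$ yields \eqref{GammaConv}. I expect the genuinely delicate point to be the core estimate with the \emph{sharp} constant $\gamma$: extracting it from only the weak information $J(u_{\eps})\to\pi d_{k}\delta_{a_{k}}$ requires locating the concentration set at scale $\eps$ (the ball-construction machinery) and the Bethuel--Brezis--H\'elein analysis of the optimal profile; securing the matching $o_{\sigma}(1)$ in the far-field term is the standard renormalized-energy computation and should be routine.
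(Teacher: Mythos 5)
The paper does not prove this proposition at all: Section 2.3 is prefaced by the remark that its content ``is standard and is taken from \cite{BBH} and \cite{JerrardSoner}'', and \eqref{GammaConv} is simply quoted as the known second-order lower bound. Your argument is exactly the standard proof behind that citation --- reduce to a subsequence with bounded excess, split into vortex cores and a perforated far field, extract $N\gamma$ plus the logarithm from the sharp single-vortex core bound, and recover $W(\vortexvector,\dd)$ from weak lower semicontinuity together with the Bethuel--Brezis--H\'elein expansion of the minimal $\sS^{1}$-valued energy on the punctured domain --- so it is correct and consistent with what the paper relies on. The only point worth flagging is the one you already flag yourself: the core estimate with the \emph{sharp} constant $\gamma$ for arbitrary (non-minimizing) sequences, and the preservation of the boundary degrees under the weak limit on the annuli, are the genuinely nontrivial imported ingredients, and in a self-contained write-up they would have to be quoted precisely from \cite{JerrardSoner} or \cite{JerrardSpirnRefined} rather than rederived.
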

The generic constant $\gamma$ is the finite part of the energy of the vortex core. 

The \textit{approximate energy} $W_{\eps}(\vortexvector, \dd)$ is given by 
\begin{equation*}
W_{\eps}(\vortexvector, \dd):=\pi N\log(1/\eps)+N\gamma +W(\vortexvector, \dd).
\end {equation*}
It is the minimal Ginzburg-Landau energy that a family $(u_{\eps})$ with a vortex configuration $(\vortexvector, \dd)$ can have. 
For such a family, the \textit{energy excess} $D_{\eps}$ is defined by
\begin{equation}\label{eq:EnergyExcess}
D_{\eps}:=E_{\eps}(u_{\eps})-W_{\eps}(\vortexvector, \dd).
\end{equation}
If $(u_{\eps})$ depends on time, so does $D_{\eps}$.

The family $(u_{\eps})$ is \textit{well-prepared}, if for it $D_{\eps}=o(1)$. 

\subsection{Requirements on the external vector fields}
The vector fields $\F(x,t)$ and $\gG(x, t)$ are smooth in both variables and satisfy
\[\F(x,0)\big|_{\partial D}=\gG(x, 0)\big|_{\partial D}=0.
\]
Then, the compatibility conditions for the initial-boundary value problems \eqref{GLConvectionDet} - \eqref{GLDetBC} - \eqref{GLDetIC} and \eqref{GLConvectionDet} - \eqref{GLDetBCNeumann} - \eqref{GLDetIC} are fulfilled.
In the Neumann case, we need in addition the equality
\begin{equation}\label{CondOnG}
(\gG(x,t), \nnu)|_{\partial D}=0
\end{equation}
to hold for all $t\geqslant 0$. Here, $\nnu$ is the unit outer normal on $\partial D$.

\subsection{Statement of the main result}\label{sec:MainResult}
The aim of the present work is to prove the theorem below.
\begin{theorem}\label{ThmGLDet}
Let $(u_{\eps})$ be the family of solutions to \eqref{GLConvectionDet} with either Dirichlet or Neumann boundary conditions. Suppose that the initial data $u_{\eps}^{0}$ are well-prepared. In other words, there exists a configuration $(\vortexvector^{0}, \dd)$ of $N$ vortices with $\dd\in(\pm 1)^N$ such that 
\begin{equation}\label{EnergyAtTimeZero}
E_{\eps}(u_{\eps}^{0})=\pi N \log (1/\eps)+N\gamma +W(\vortexvector^{0},\dd)+o(1)
\end{equation}
and
\begin{equation}\label{JacobianAtTimeZero}
J(u_{\eps}^{0})\xrightarrow[\eps\to 0]{} \pi\sum_{k=1}^{N}d_{k}\delta_{a^{0}_{k}} \text{ in the space } (C_{c}^{0,1})^{*}.
\end{equation}
Then, there exists a time $T^{*}>0$ such that 
for $t\in[0, T^{*})$, the following holds.
\begin{enumerate}
\item The family $(u_{\eps}(t))$ is well-prepared;
\item the Jacobian $J(u_{\eps}(t))$ converges in $(C_{c}^{0,1})^{*}$ to the measure $\pi\sum_{k=1}^{N}d_{k}\delta_{a_{k}(t)}$;
\item the vortex trajectories $a_{k}(t)$ solve the system of ODEs
\begin{equation}\label{GLDetVortexMotion}
\left\{\begin{array}{l}
(\lambda_{0}+i)\,\dot{a}_{k}=-\frac{1}{\pi}\partial_{a_{k}}W(\vortexvector(t), \dd)+\F(a_{k}(t), t)+d_{k}i\gG(a_{k}(t), t),\\
a_{k}(0)=a_{k}^{0}.
\end{array}
\right.
\end{equation}
\end{enumerate}
In the Dirichlet case, two vortices collide at time $T^{*}$. In the Neumann case, at $T^{*}$ either two vortices collide or a vortex hits the boundary of the domain.
\end{theorem}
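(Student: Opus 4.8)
The plan is to follow the Jacobian-evolution strategy of \cite{KMMS} and \cite{KMM_Spin}, adapted to the two new forcing terms. First I would set up the analysis of the energy. Starting from the well-prepared initial data, I would differentiate $E_{\eps}(u_{\eps}(t))$ in time and use equation \eqref{GLConvectionDet} to express $\tfrac{d}{dt}E_{\eps}(u_{\eps})$. The $L^{2}$-gradient structure of the right-hand side produces the expected dissipative term $-\lambda_{\eps}\int_{D}|\partial_{t}u_{\eps}|^{2}$ together with contributions from the convective terms: one of the form $k_{\eps}\int_{D}(\F\cdot\nabla)u_{\eps}\cdot\partial_{t}u_{\eps}$ and one of the form $\int_{D}(\gG\cdot\nabla)(iu_{\eps})\cdot\partial_{t}u_{\eps}$. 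The first is controlled by absorbing a small multiple of the dissipation and bounding $k_{\eps}\int_{D}|\nabla u_{\eps}|^{2}$ by $C\,k_{\eps}E_{\eps}(u_{\eps})\le C\mu_{\eps}(D)$; this only costs a Gronwall factor. The second, Schr\"odinger-type term is the delicate one: a naive Cauchy--Schwarz estimate loses a factor of $|\log\eps|$, so one must instead integrate by parts and exploit the precise algebraic structure, rewriting $\int_{D}(\gG\cdot\nabla)(iu_{\eps})\cdot\partial_{t}u_{\eps}$ in terms of the current $\jj(u_{\eps})$ and the energy density, to show it is bounded by $C(\mu_{\eps}(D)+1)$ without the logarithmic loss. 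Together these give $E_{\eps}(u_{\eps}(t))\le \pi N\log(1/\eps)+Ct$ up to lower-order terms, preserving the upper bound \eqref{upper_bound} on a time interval, which is what the Jacobian concentration results require.

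Next I would establish the existence, continuity, and concentration of the vortex paths. With the energy bound \eqref{upper_bound} in hand, the results of \cite{JerrardSoner} give precompactness of $J(u_{\eps}(t))$ in $(C^{0,\alpha})^{*}$; using the transport identity \eqref{JacobianRelation} together with an $L^{1}$-in-space, $L^{1}$-in-time bound on $\V(u_{\eps})$ (again controlled via the energy and the kinetic energy estimate), one obtains equicontinuity in time of $t\mapsto J(u_{\eps}(t))$ as a curve in $(C_{c}^{0,\alpha})^{*}$, hence a limiting measure $\pi\sum_{k}d_{k}\delta_{a_{k}(t)}$ with continuous trajectories $a_{k}(t)$ on a maximal interval $[0,T^{*})$ where the vortices stay distinct and away from $\partial D$. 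The quantitative concentration statements --- that $e_{\eps}(u_{\eps})$, the stress tensor $(\nabla u_{\eps}\otimes\nabla u_{\eps})$, and the momentum $\pp(u_{\eps})$ all concentrate at the vortex cores modulo $o(1/k_{\eps})$ errors --- follow from the energy excess estimates, i.e.\ from $D_{\eps}(t)=o(1)$, exactly as in \cite{KMM_Spin}; the point is that well-preparedness is propagated (item (1)), which itself is part of the Gronwall loop.

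To extract the motion law \eqref{GLDetVortexMotion}, I would test the evolution equation \eqref{JacobianRelation} for the Jacobian against a spatial test function of the form $\curl\varphi$, where $\varphi$ is affine near $a_{k}$, vanishes near the other vortices, and is chosen so that $\curl\varphi(a_{k})$ is an arbitrary fixed vector. On the left one recovers, in the limit, $\pi d_{k}\,\curl\varphi(a_{k})\cdot\dot a_{k}$ times the appropriate coefficient; on the right one must identify the limit of $\int_{D}(\grad\curl\varphi):(\nabla u_{\eps}\otimes\nabla u_{\eps})$ with $-\curl\varphi(a_{k})\cdot\partial_{a_{k}}W(\vortexvector(t))$ via Proposition~\ref{prop:GradOfW} applied to the canonical harmonic map, using that the stress tensor of $u_{\eps}$ concentrates and agrees asymptotically with that of $u_{*}^{(\vortexvector,\dd)}$ away from the cores (this is where the energy excess bound enters crucially). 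The genuinely new contributions are the limits of the forcing terms: the $\F$-term contributes $k_{\eps}\int_{D}(\F\cdot\nabla)u_{\eps}$-type quantities that, after writing them through the current and using Jacobian concentration, converge to $\lambda_{0}\,\F(a_{k}(t),t)$ paired against the test function --- note the scaling $\lambda_{\eps}=\lambda_{0}k_{\eps}$ is what produces the clean coefficient --- while the $\gG$-term, being tied to the Schr\"odinger part, contributes $d_{k}i\gG(a_{k}(t),t)$, the degree appearing because the relevant quantity is the Jacobian rather than the energy measure. Combining all limits and using that $\curl\varphi(a_{k})$ is arbitrary yields \eqref{GLDetVortexMotion}, with $a_{k}(0)=a_{k}^{0}$ from \eqref{JacobianAtTimeZero}. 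Finally, $T^{*}$ is characterized as the first time $\rho(\vortexvector(t))\to 0$, i.e.\ a collision or (in the Neumann case) a boundary collision, since the ODE system \eqref{GLDetVortexMotion} has smooth coefficients as long as $W$ is finite.

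The main obstacle, as flagged above, is the energy estimate for the $\gG$-driven convective term: it is the one place where the standard Ginzburg-Landau toolkit (energy monotonicity or conservation) genuinely fails, the energy can increase, and a crude bound is off by a power of $|\log\eps|$. Controlling it requires identifying and exploiting a cancellation in the structure of $\int_{D}(\gG\cdot\nabla)(iu_{\eps})\cdot\partial_{t}u_{\eps}$ --- essentially that this term, unlike a heat-type forcing, does not see the full kinetic energy but only a current-type quantity of lower order. Once this estimate is secured, the Gronwall loop (energy bound $\Rightarrow$ Jacobian concentration $\Rightarrow$ well-preparedness $\Rightarrow$ stress/momentum concentration $\Rightarrow$ motion law, feeding back into the energy bound) closes on $[0,T^{*})$ in the now-standard way.
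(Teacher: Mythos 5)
Your overall architecture coincides with the paper's: energy bounds, Jacobian/stress/momentum concentration, testing the conservation law for the Jacobian against $\curl\varphi$ with Proposition~\ref{prop:GradOfW}, and a Gronwall loop in the excess $D_{\eps}$ and the position error. But there is a genuine gap at exactly the point you flag as the main obstacle. You claim that integrating by parts and ``exploiting the algebraic structure'' bounds $\int_{D}((\gG\cdot\nabla)(iu_{\eps}),\partial_{t}u_{\eps})$ by $C(\mu_{\eps}(D)+1)$. The algebraic identity is $((\gG\cdot\nabla)iu_{\eps},\partial_{t}u_{\eps})=-(\gG,\V(u_{\eps}))$, and after integrating by parts in space and time one is left with terms such as $\int\!\!\int\diverg\gG\,(u_{\eps}\times\partial_{t}u_{\eps})$ and $\int_{D}(\gG(T),\jj(u_{\eps}(T)))$. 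These still contain $\partial_{t}u_{\eps}$ and $\nabla u_{\eps}$, and without further input they can only be estimated by $C\log(1/\eps)$ (absorbing into the dissipation costs a factor $1/\lambda_{\eps}\sim\log(1/\eps)$, and $\|\jj(u_{\eps})\|_{L^{1}}$ is only controlled by $\sqrt{E_{\eps}}$). So your direct differential inequality yields $E_{\eps}(t)\leqslant C\log(1/\eps)$, not $\pi N\log(1/\eps)+Ct$. The missing ingredient is the kinetic-energy bound $k_{\eps}\int_{0}^{\bar T}\!\!\int_{D}|\partial_{t}u_{\eps}|^{2}\leqslant C$, which is a separate, nontrivial proposition: the paper proves it by contradiction, rescaling time so that the vortices freeze (via the Sandier--Serfaty product estimate) while the energy would have to dissipate by an amount incompatible with the well-preparedness \eqref{EnergyAtTimeZero}. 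Only with this bound does the total Jacobian $(J(u_{\eps}),\V(u_{\eps}))$ become compact in $(C^{0,\alpha})^{*}$, which is what finally lets one replace $C\log(1/\eps)$ by $C$ in the estimate of $\int\!\!\int(\gG,\V(u_{\eps}))$ and obtain $E_{\eps}(t)\leqslant\pi N\log(1/\eps)+C$. You invoke ``the kinetic energy estimate'' in passing but never indicate how to prove it; as written, the energy step of your argument does not close.

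Two smaller inaccuracies in the motion-law step: the $\F$-forcing enters the Jacobian evolution through $-k_{\eps}\curl(\F\cdot(\nabla u_{\eps}\otimes\nabla u_{\eps}))$, so its limit is identified via the concentration of the \emph{stress tensor} (equipartition, $k_{\eps}(\nabla u_{\eps}\otimes\nabla u_{\eps})\to\pi\Id\,\delta_{\xi_{k}}$), not via Jacobian concentration, and it contributes $\F(a_{k}(t),t)$ with coefficient $1$ --- the factor $\lambda_{0}$ you attach to it does not appear, and the scaling $\lambda_{\eps}=\lambda_{0}k_{\eps}$ is responsible for the $\lambda_{0}\dot a_{k}$ term on the left-hand side instead. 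It is the $\gG$-term, entering as $-\curl((i\gG)\cdot J(u_{\eps}))$, that uses Jacobian concentration and hence picks up the degree $d_{k}$.
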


\subsection{On the existence of the solution to the equation \eqref{GLConvectionDet}}\label{sec:DetGLExistence}

First, we can always find initial data satisfying the assumptions of Theorem~\ref{ThmGLDet} - see \cite{JerrardSpirnRefined}, Lemma~14. 
Second, the following proposition holds true.
\begin{proposition}\label{prop:Existence}
For every fixed $\eps>0$, each of the problems \eqref{GLConvectionDet} - \eqref{GLDetBC}- \eqref{GLDetIC} and \eqref{GLConvectionDet} - \eqref{GLDetBCNeumann}- \eqref{GLDetIC} has a unique global classical solution. 
\end{proposition}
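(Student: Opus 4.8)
The plan is to establish local well-posedness by a fixed-point argument and then upgrade to a global classical solution via \textit{a priori} bounds coming from the energy structure of \eqref{GLConvectionDet}. First I would rewrite the equation in the form
\begin{equation*}
\partial_{t} u_{\eps} = (\lambda_{\eps}+i)^{-1}\Bigl(\Delta u_{\eps} + \tfrac{1}{\eps^{2}}(1-|u_{\eps}|^{2})u_{\eps} - k_{\eps}(\F\cdot\nabla) u_{\eps} - (\gG\cdot\nabla)(iu_{\eps})\Bigr),
\end{equation*}
noting that $\lambda_{\eps}+i\neq 0$ so the principal part is $(\lambda_{\eps}+i)^{-1}\Delta$, a generator of an analytic semigroup on $L^{2}(D)$ (and on the $L^{p}$ or Hölder scales) under either the Dirichlet or the Neumann realization of the Laplacian; the compatibility conditions $\F(\cdot,0)|_{\partial D}=\gG(\cdot,0)|_{\partial D}=0$ and \eqref{CondOnG} are exactly what is needed for the boundary conditions to be preserved and for the mild solution to be a genuine classical solution up to the boundary. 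The cubic nonlinearity $\tfrac{1}{\eps^{2}}(1-|u|^{2})u$ and the first-order convective terms $k_{\eps}(\F\cdot\nabla)u$ and $(\gG\cdot\nabla)(iu)$ are both locally Lipschitz from $H^{1}$ (or $C^{1,\alpha}$) into $L^{2}$ (resp. $C^{0,\alpha}$) on bounded sets, so a standard contraction-mapping argument in $C([0,T_{0}];X)$ for a suitable Banach space $X$ and small $T_{0}=T_{0}(\eps,\|u_{\eps}^{0}\|_{X})$ yields a unique local mild solution. Parabolic bootstrapping — using the smoothing of the analytic semigroup together with the smoothness of $\F$, $\gG$ and the data — then promotes this solution to $C^{\infty}((0,T_{0}]\times\overline{D})$, and smoothness up to $t=0$ follows from the assumed smoothness of $u_{\eps}^{0}$ together with the compatibility conditions.

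The next step is to show the local solution does not blow up in finite time, so that $T_{0}$ can be taken arbitrarily large. For fixed $\eps$ this is where the energy identity enters: testing \eqref{GLConvectionDet} against $\partial_{t}u_{\eps}$ and taking real parts, one obtains (this is essentially the computation carried out in Section~\ref{sec:GLDetEnEst})
\begin{equation*}
\frac{d}{dt}E_{\eps}(u_{\eps}) = -\lambda_{\eps}\int_{D}|\partial_{t}u_{\eps}|^{2}\,dx - \int_{D}\bigl(k_{\eps}(\F\cdot\nabla)u_{\eps}+(\gG\cdot\nabla)(iu_{\eps}),\,\partial_{t}u_{\eps}\bigr)\,dx,
\end{equation*}
and the convective terms are controlled by Young's inequality against $\tfrac{\lambda_{\eps}}{2}\|\partial_{t}u_{\eps}\|_{L^{2}}^{2}$ plus $C(\eps)\|\nabla u_{\eps}\|_{L^{2}}^{2}\le C(\eps)E_{\eps}(u_{\eps})$, so Gronwall gives $E_{\eps}(u_{\eps}(t))\le E_{\eps}(u_{\eps}^{0})e^{C(\eps)t}$ on any interval of existence. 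This bounds $\|u_{\eps}(t)\|_{H^{1}}$ and, via the potential term, $\|u_{\eps}(t)\|_{L^{4}}$, by quantities finite for every finite $t$; feeding this back into the Duhamel formula and iterating the smoothing estimates controls $\|u_{\eps}(t)\|_{X}$ on any bounded time interval. Hence the maximal existence time is $+\infty$, and uniqueness is inherited from the local contraction argument (or proved directly by a Gronwall estimate on the difference of two solutions).

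The step I expect to be the main obstacle is handling the first-order convective terms at the boundary and the compatibility needed for a \emph{classical} (not merely weak or mild) global solution. Unlike the unforced mixed flow, here one must verify that the boundary data are propagated by a semigroup whose generator is $(\lambda_{\eps}+i)^{-1}\Delta$ — a complex multiple of the Laplacian — which is routine but requires care in the Neumann case because the conormal condition $\partial_{\nnu}u_{\eps}=0$ interacts with the convective term $(\gG\cdot\nabla)(iu_{\eps})$; condition \eqref{CondOnG} is precisely the hypothesis that makes the forcing compatible with the zero-flux boundary condition for all $t$, so that no boundary layer is generated and the higher-order compatibility conditions for $C^{\infty}$ regularity hold. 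One could alternatively avoid semigroup technology entirely and argue by Galerkin approximation plus the energy estimate above for existence of a weak solution, followed by elliptic/parabolic regularity for smoothness; I would, however, prefer the semigroup route since it delivers uniqueness and classicality in one package. In either case the argument is by now standard for semilinear parabolic systems with smooth lower-order terms, and I would present it concisely, referring to \cite{KMMS} and \cite{MelcherPtashnyk} for the analogous details in the unforced mixed-flow and Landau--Lifshitz settings.
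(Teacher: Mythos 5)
Your argument is correct, and it shares the paper's key reduction: dividing \eqref{GLConvectionDet} by $\lambda_{\eps}+i$, whose inverse has positive real part, turns the principal part into a strictly parabolic operator on the real pair $(u_{\eps}^{1},u_{\eps}^{2})$. Where you diverge is in the existence machinery. The paper (following \cite{KMMS}, Remark~2.4) constructs a global weak solution directly by Galerkin approximation --- for which the fixed-$\eps$ energy estimate you derive is the compactness input --- and then upgrades to a classical solution via parabolic Calder\'on--Zygmund estimates \cite{Taylor_III}; you instead run the analytic-semigroup/contraction-mapping scheme, which delivers local existence, uniqueness and smoothing in one stroke and relegates the energy identity to the role of a continuation criterion. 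Both routes are standard for this semilinear system: yours makes uniqueness automatic and isolates cleanly why no blow-up occurs for fixed $\eps$, while the paper's Galerkin route sidesteps the (routine but fussy) points you correctly flag, namely lifting the inhomogeneous Dirichlet datum $g$ in \eqref{GLDetBC} into the mild formulation, the behaviour of analytic semigroups on H\"older scales, and the higher-order compatibility conditions at $t=0$ required for classical regularity up to the parabolic boundary. Two small confirmations: your claim that the cubic term is locally Lipschitz from $H^{1}$ to $L^{2}$ is valid precisely because $D\subset\R^{2}$, and your Gronwall bound $E_{\eps}(u_{\eps}(t))\leqslant E_{\eps}(u_{\eps}^{0})e^{C(\eps)t}$ is exactly the crude, $\eps$-dependent version of the estimates of Section~\ref{sec:GLDetEnEst}, which is all that globality for fixed $\eps$ requires.
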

\begin{proof}
The reasoning is the same as in \cite{KMMS}, Remark 2.4. The equation \eqref{GLConvectionDet} is equivalent to a strictly parabolic system of semilinear equations on the real functions $u_{\eps}^{1}$ and $u_{\eps}^{2}$. For such a system, we construct a weak solution via the Galerkin approximation. We then improve the regularity with the help of the Calderon-Zygmund estimates \cite{Taylor_III}. 
\qquad\end{proof}

Even though the system on $u_{\eps}^{1}$ and $u_{\eps}^{2}$ is parabolic, the weak maximum principle does not hold for it. This is due to the strong coupling. Hence, the uniform estimate $|u_{\eps}|\leqslant 1$ is \textit{not} available, even in the Dirichlet case. The lack of this additional information is the only reason to use the excess energy estimates. Otherwise we could employ a more robust method of \cite{KMMSExcess} for the derivation of the effective equation. This would allow us to lift the assumption on the well-preparedness of the initial data.

\subsection{Conservation laws}\label{sec:ConsLaws}  

Here, we derive several intermediate evolution equations on the quantities defined in Section~\ref{sec:RenEn}. 

For the field $(\gG\cdot \nabla)iu_{\eps}$, we easily find that 
\begin{equation}\label{GWithNablaU}
\big((\gG\cdot \nabla)iu_{\eps}, \nabla u_{\eps}\big)=i\gG\cdot J(u_{\eps})
\end{equation}
and
\begin{equation}\label{GWithTimeDerivative}
\big((\gG\cdot \nabla)iu_{\eps}, \partial_{t} u_{\eps}\big)=-(\gG , \V(u_{\eps})).
\end{equation}

We obtain \textit{the conservation law for the energy density} by multiplying both sides of \eqref{GLConvectionDet} with $\partial_{t} u_{\eps}$ and using \eqref{GWithTimeDerivative}. The law is given by
\begin{equation}\label{ConsOfEn}
\partial_{t}e_{\eps}(u_{\eps})=-\lambda_{\eps}|\partial_{t}u_{\eps}|^{2}-k_{\eps}(\F, \pp( u_{\eps}))+(\gG, \V(u_{\eps})) +\diverg \pp(u_{\eps}).
\end{equation}

We also need \textit{the conservation law for the Jacobian}. 
We first use the identity \eqref{JacobianRelation} to see that 
$\partial_{t}J(u_{\eps})=\curl (\partial_{t}u_{\eps}\times \nabla u_{\eps})=\curl (i\partial_{t}u_{\eps}, \nabla u_{\eps}).
$
We then substitute $i \partial_{t} u_{\eps}$ from \eqref{GLConvectionDet}. Several formal transformations and \eqref{GWithNablaU} finally yield 
\begin{multline}\label{ConsOfJacobian}
\partial_{t}J(u_{\eps})+\lambda_{\eps}\curl \pp(u_{\eps})=\curl \diverg (\nabla u_{\eps}\otimes \nabla u_{\eps})\\
-k_{\eps}\curl(\F\cdot (\nabla u_{\eps}\otimes \nabla u_{\eps}))-\curl((i\gG)\cdot J(u_{\eps})).
\end{multline}
 
The \textit{conservation law for the mass} reads
\begin{align}\label{ConsOfMass}
\partial_{t}(\tfrac{1-|u_{\eps}|^{2}}{2})+\lambda_{\eps}(u_{\eps}\times \partial_{t} u_{\eps})+k_{\eps}(\F,\jj(u_{\eps}))+(\gG, \nabla (\tfrac{1-|u_{\eps}|^{2}}{2}))=\diverg \jj(u_{\eps}).
\end{align}
We arrive at it when we multiply \eqref{GLConvectionDet} with $iu_{\eps}$. 

\section{Energy estimates and corollaries}\label{sec:GLDetEnEst}

In this section, we study the Ginzburg-Landau energy and the kinetic energy of the solution to \eqref{GLConvectionDet}. We first prove a rather crude estimate on the Ginzburg-Landau energy in Lemma~\ref{Lem:GLEn}. From it, we deduce an optimal estimate on the kinetic energy in Proposition~\ref{prop:kinetic_energy}. This implies the relative compactness of the total Jacobian in Corollary~\ref{cor:CompOfTheTotalJacobian}. We finally improve the control on the Ginzburg-Landau energy in Proposition~\ref{prop:GLEnergyUpperBound}.

\begin{lemma}\label{Lem:GLEn}
For every $T>0$, there exists a constant $C>0$ such that for all $0\leqslant t\leqslant T$, there holds
\begin{equation}\label{GLEnergyUpperBoundCrude}
E_{\eps}(u_{\eps}(t))\leqslant C\log(1/\eps).
\end{equation}
\end{lemma}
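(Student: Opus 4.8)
The plan is to integrate the energy conservation law \eqref{ConsOfEn} in time and absorb the forcing contributions of $\F$ and $\gG$ by Gronwall-type arguments, keeping careful track of which terms can actually grow the energy. Integrating \eqref{ConsOfEn} over $D$ and using the boundary conditions to kill $\int_D \diverg \pp(u_\eps)\,dx$ (here the hypotheses $\F(\cdot,0)|_{\partial D}=\gG(\cdot,0)|_{\partial D}=0$, and \eqref{CondOnG} in the Neumann case, together with the parabolic regularity from Proposition~\ref{prop:Existence}, guarantee the boundary term vanishes), I obtain
\[
\frac{d}{dt}E_\eps(u_\eps(t)) = -\lambda_\eps\!\int_D |\partial_t u_\eps|^2\,dx \;-\; k_\eps\!\int_D (\F,\pp(u_\eps))\,dx \;+\; \int_D (\gG,\V(u_\eps))\,dx.
\]
The first term is a good (dissipative) term. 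The task is to show the remaining two terms are controlled by $C(E_\eps(u_\eps)+1)$, after which Gronwall gives $E_\eps(u_\eps(t))\le e^{Ct}(E_\eps(u_\eps^0)+1)$, and the initial bound \eqref{EnergyAtTimeZero} (which gives $E_\eps(u_\eps^0)\le \pi N\log(1/\eps)+O(1)$) yields \eqref{GLEnergyUpperBoundCrude} with a constant depending on $T$.

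For the $\F$ term, I would use Young's inequality: $|k_\eps(\F,\pp(u_\eps))| \le k_\eps\|\F\|_\infty |\partial_t u_\eps|\,|\nabla u_\eps| \le \frac{\lambda_\eps}{2}|\partial_t u_\eps|^2 + \frac{k_\eps^2\|\F\|_\infty^2}{2\lambda_\eps}|\nabla u_\eps|^2$. Since $\lambda_\eps=\lambda_0 k_\eps$, the coefficient $k_\eps^2/\lambda_\eps = k_\eps/\lambda_0$ is bounded (in fact $o(1)$), so the second piece is $\le C\int_D|\nabla u_\eps|^2 \le C E_\eps(u_\eps)$, while the first piece is absorbed into the dissipative term. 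This is exactly the ``standard integral inequalities'' the introduction alludes to for the $\F$ contribution.

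The main obstacle is the $\gG$ term $\int_D(\gG,\V(u_\eps))\,dx$: the velocity of the Jacobian $\V(u_\eps)=(V^1,V^2)$ with $V^j=\partial_t(u_\eps\times \partial_j u_\eps)-\partial_j(u_\eps\times\partial_t u_\eps)$ involves a time derivative, so one cannot naively bound it by the energy density — the crude estimate $|\V| \lesssim |\partial_t u_\eps||\nabla u_\eps|$ would force a term like $k_\eps^{-1}\|\gG\|_\infty |\partial_t u_\eps||\nabla u_\eps|$ (there is no $k_\eps$ prefactor on the $\gG$ term), and the Young split would then produce $\frac{\|\gG\|_\infty^2}{2\lambda_\eps}|\nabla u_\eps|^2 = \frac{C}{k_\eps}|\nabla u_\eps|^2$, i.e. a coefficient blowing up like $|\log\eps|$, which Gronwall cannot handle over a fixed time. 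The fix, following the strategy flagged in the introduction (``we rather make use of the precise structure of the emerging terms''), is to integrate by parts to move the $\partial_t$ off $\V$: write $\int_D(\gG,\V(u_\eps))\,dx$ using $V^j=\partial_t(u_\eps\times\partial_j u_\eps)-\partial_j(u_\eps\times\partial_t u_\eps)$, integrate the second summand by parts in $x$ (producing $\int_D \curl\gG\,(u_\eps\times\partial_t u_\eps)$ plus a boundary term that vanishes by \eqref{CondOnG}), and handle the first summand by pulling $\partial_t$ out of the space integral: $\int_D g^j\,\partial_t(u_\eps\times\partial_j u_\eps)\,dx = \frac{d}{dt}\int_D g^j(u_\eps\times\partial_j u_\eps)\,dx - \int_D \partial_t g^j\,(u_\eps\times\partial_j u_\eps)\,dx$. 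Then $\int_D g^j(u_\eps\times\partial_j u_\eps)\,dx = \int_D (\gG,\jj(u_\eps))\,dx$ is itself bounded by $C\int_D|u_\eps||\nabla u_\eps| \le C(\|u_\eps\|_{L^2}^2 + \|\nabla u_\eps\|_{L^2}^2)$; to close this I need an a priori $L^2$ bound on $u_\eps$, which follows from the mass conservation law \eqref{ConsOfMass} integrated in time (or from a direct energy estimate), noting $\||u_\eps|^2-1\|_{L^1}\le 2\eps^2 E_\eps(u_\eps)$ keeps $\|u_\eps\|_{L^2}^2$ comparable to $|D|$ plus lower-order terms. After this manipulation, define $\widetilde E_\eps(t):=E_\eps(u_\eps(t)) - \int_D(\gG(t),\jj(u_\eps(t)))\,dx$; the inequality becomes $\frac{d}{dt}\widetilde E_\eps \le C(\widetilde E_\eps + 1)$ with $C=C(T,\|\F\|_{C^1},\|\gG\|_{C^1})$, and since $\widetilde E_\eps$ differs from $E_\eps$ by $O(E_\eps^{1/2}+1)$, Gronwall on $\widetilde E_\eps$ followed by undoing the substitution delivers \eqref{GLEnergyUpperBoundCrude}. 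The one delicate bookkeeping point throughout is checking that every integration-by-parts boundary term genuinely vanishes under Dirichlet/Neumann conditions combined with the decay assumptions on $\F,\gG$ at $t=0$ and \eqref{CondOnG}.
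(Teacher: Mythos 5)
Your proposal is correct and follows essentially the same route as the paper: integrate the energy law, absorb the $\F$ term into the dissipation by Cauchy--Schwarz/Young, and—crucially—exploit the structure of $\V(u_{\eps})$ by integrating by parts in $x$ and $t$ so that the $\gG$ term reduces to quantities controlled by $\|u_{\eps}\|_{L^{2}}$ and the energy, before closing with Gronwall and the well-prepared initial data (the paper integrates in time and absorbs the endpoint term $\int_{D}(\gG(T),\jj(u_{\eps}(T)))$ rather than introducing your modified functional $\widetilde E_{\eps}$, but these are equivalent). Two harmless slips: the spatial integration by parts produces $\diverg\gG\,(u_{\eps}\times\partial_{t}u_{\eps})$, not $\curl\gG$, and the correct bound is $\| |u_{\eps}|^{2}-1\|_{L^{1}}\leqslant C\eps E_{\eps}^{1/2}$ rather than $2\eps^{2}E_{\eps}$.
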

\begin{proof}
We integrate \eqref{ConsOfEn} over the set $D\times[0, T]$. Then, we obtain 
\begin{align}\label{eq:EnergyEvolution}
E_{\eps}(u_{\eps}(T))+\lambda_{\eps}\int\limits_{0}^{T}&\int\limits_{D}|\partial_{t}u_{\eps}|^{2}
= E_{\eps}(u_{\eps}^{0})
-k_{\eps}\int\limits_{0}^{T}\int\limits_{D}(\F, \pp(u_{\eps}))+\int\limits_{0}^{T}\int\limits_{D}(\gG,\V(u_{\eps})).
\end{align}
We estimate the integrals on the right-hand side of \eqref {eq:EnergyEvolution}. To the first of them, we apply successively the Cauchy-Schwartz inequality and the Young inequality with appropriately chosen coefficients. This gives
\begin{multline}\label{EstOnF}
\Big|k_{\eps}\int\limits_{0}^{T}\int\limits_{D}(\F, \pp(u_{\eps}))\Big|
\leqslant C\Big(\lambda_{\eps}\int\limits_{0}^{T}\int\limits_{D}|\partial_{t}u_{\eps}|^{2}\Big)^{\tfrac{1}{2}}\cdot\Big(k_{\eps}\int\limits_{0}^{T}\int\limits_{D}|\nabla u_{\eps}|^{2}\Big)^{\tfrac{1}{2}}\\
\leqslant \frac{\lambda_{\eps}}{4}\int\limits_{0}^{T}\int\limits_{D}|\partial_{t}u_{\eps}|^{2}+ Ck_{\eps} \int\limits_{0}^{T}E_{\eps}(u_{\eps}(t))\,dt.
\end{multline}

The estimate for the second integral is slightly more involved. The decisive step is to use the structure of $\V(u_{\eps})$ first. The remaining estimates are again standard. We start with integration by parts
\begin{multline*}
\int\limits_{0}^{T}\int\limits_{D}(\gG,\V(u_{\eps}))
=-\frac{1}{2}\int\limits_{0}^{T}\int\limits_{D}\diverg\gG\cdot(u_{\eps}\times \partial_{t}u_{\eps})+\frac{1}{2}\int\limits_{0}^{T}\int\limits_{D}(\partial_{t}\gG, \jj(u_{\eps}))\\
+\frac{1}{2}\int\limits_{D}(\gG(0),\jj(u_{\eps}^{0}))-(\gG(T),\jj(u_{\eps}(T)))\,dx.
\end{multline*}
The integral over $\partial D$, which appears when we integrate by parts with respect to $x$, is equal to zero. In the Dirichlet case, this follows from the fact that the boundary data $g(x)$ is independent of time. In the Neumann case, this follows from the additional assumption \eqref{CondOnG} on the field $\gG$. 

We work with the three new terms. We take into account the bound
\begin{equation}\label{Modulus}
\int\limits_{0}^{T}\int\limits_{D}|u_{\eps}|^{2}=\int\limits_{0}^{T}\int\limits_{D}(|u_{\eps}(x,t)|^{2}-1)+T\mathcal{L}^{2}(D)\leqslant C+C\eps^{2}\int\limits_{0}^{T}E_{\eps}(u_{\eps}(t))\,dt.
\end{equation}
Here, $\mathcal L^{2}$ is the Lebesgue measure on $\R^{2}$. 
We obtain, by using the Cauchy-Schwartz inequality, \eqref{Modulus}, and the Young inequality, the estimates
\begin{multline*}
\Big|\int\limits_{0}^{T}\int\limits_{D}\diverg\gG\cdot(u_{\eps}\times \partial_{t}u_{\eps})\Big|\\
\leqslant \frac{\lambda_{\eps}}{4}\int\limits_{0}^{T}\int\limits_{D}|\partial_{t}u_{\eps}|^{2}+C\eps^{2}\log(1/\eps)\cdot\int\limits_{0}^{T}E_{\eps}(u_{\eps}(t))\,dt+C\log(1/\eps),
\end{multline*}

\begin{multline*}
\Big|\int\limits_{0}^{T}\int\limits_{D}(\partial_{t}\gG, \jj(u_{\eps}))\Big|\leqslant C \Big(\int\limits_{0}^{T}\int\limits_{D}|u_{\eps}|^{2}\Big)^{\tfrac{1}{2}}\cdot \Big(\int\limits_{0}^{T}\int\limits_{D}|\nabla u_{\eps}(x,t)|^{2}\Big)^{\tfrac{1}{2}}\\
\leqslant Ck_{\eps}\int\limits_{0}^{T}E_{\eps}(u_{\eps}(t))\, dt+C\eps^{2}\log(1/\eps)\int\limits_{0}^{T}E_{\eps}(u_{\eps}(t))\,dt+C\log(1/\eps),
\end{multline*}
and
\begin{multline*}\Big|\int\limits_{D}(\gG(0),\jj(u_{\eps}^{0}))-(\gG(T),\jj(u_{\eps}(T)))\,dx\Big|\\
\leqslant C\log(1/\eps)+C(\eps^{2}\log(1/\eps)+k_{\eps})E_{\eps}(u_{\eps}(T)).
\end{multline*}

Together with \eqref{EstOnF}, this gives
\begin{multline}\label{EnIneqPrelim}
E_{\eps}(u_{\eps}(T))+\frac{\lambda_{\eps}}{2}\int\limits_{0}^{T}\int\limits_{D}|\partial_{t}u_{\eps}|^{2}\leqslant E_{\eps}(u_{\eps}^{0})+C(\eps^{2}\log(1/\eps)+k_{\eps})E_{\eps}(u_{\eps}(T))\\
+C(\eps^{2}\log(1/\eps)+k_{\eps})\int\limits_{0}^{T}E_{\eps}(u_{\eps}(t))\,dt+C\log(1/\eps).
\end{multline}
By choosing $\eps$ small enough, we absorb the term $C(\eps^{2}\log(1/\eps)+k_{\eps})E_{\eps}(u_{\eps}(T))$ on the left-hand side. 
Now, the Gronwall Lemma and the well-preparedness of the initial data imply \eqref{GLEnergyUpperBoundCrude}.
\qquad\end{proof}

\begin{proposition}\label{prop:kinetic_energy}
There exists a time $\bar{T}>0$ and a constant $C>0$ such that 
\begin{equation}\label{kinetic_energy}
k_{\eps}\int\limits_{0}^{\bar{T}}\int\limits_{D}|\partial_{t}u_{\eps}|^{2}dx dt\leqslant C.
\end{equation}
\end{proposition}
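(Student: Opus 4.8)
The plan is to extract the kinetic energy bound from the energy evolution identity \eqref{eq:EnergyEvolution} by choosing the time horizon $\bar T$ small enough. Recall that from Lemma~\ref{Lem:GLEn} we already have $E_{\eps}(u_{\eps}(t))\leqslant C\log(1/\eps)$ uniformly on $[0,T]$, and more precisely the intermediate inequality \eqref{EnIneqPrelim}, which after absorbing the $E_{\eps}(u_{\eps}(T))$ term reads (up to constants)
\[
E_{\eps}(u_{\eps}(T))+\frac{\lambda_{\eps}}{4}\int_{0}^{T}\int_{D}|\partial_{t}u_{\eps}|^{2}
\leqslant E_{\eps}(u_{\eps}^{0})+C(\eps^{2}\log(1/\eps)+k_{\eps})\int_{0}^{T}E_{\eps}(u_{\eps}(t))\,dt+C\log(1/\eps).
\]
Dividing by $\lambda_{\eps}=\lambda_{0}k_{\eps}$ and multiplying by $k_{\eps}$ would give an extra $\log(1/\eps)$ factor, so the naive approach is too lossy; instead one must work with the sharp form of the energy.

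The key point I would exploit is the well-preparedness: by assumption \eqref{EnergyAtTimeZero}, $E_{\eps}(u_{\eps}^{0})=\pi N\log(1/\eps)+N\gamma+W(\vortexvector^{0})+o(1)$, and by the lower bound \eqref{GammaConv} together with continuity of the vortex trajectories (established in Section~\ref{sec:GLDetJacobian}, which one may invoke here, or alternatively one reorganizes so that only the crude bound is used) we have $E_{\eps}(u_{\eps}(t))\geqslant \pi N\log(1/\eps)+N\gamma+W(\vortexvector(t))-o(1)$. Subtracting, the leading $\pi N\log(1/\eps)$ terms cancel, and on a sufficiently short time interval $[0,\bar T]$ the renormalized energy stays bounded, $W(\vortexvector(t))\leqslant W(\vortexvector^{0})+C\bar T$ say (since the vortices start at a configuration with finite $W$ and move continuously, staying away from collisions for short times). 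Hence $E_{\eps}(u_{\eps}(0))-E_{\eps}(u_{\eps}(T))\leqslant C$ uniformly in $\eps$ for $T\leqslant\bar T$. Returning to the sharp energy identity \eqref{eq:EnergyEvolution} and estimating the $\F$ and $\gG$ integrals exactly as in the proof of Lemma~\ref{Lem:GLEn} — but now tracking that the forcing contributions are bounded by $C + Ck_{\eps}\int_{0}^{T}E_{\eps}(u_{\eps})\,dt + (\text{absorbable terms})$, where $k_{\eps}\int_{0}^{T}E_{\eps}(u_{\eps})\,dt\leqslant C\bar T\log(1/\eps)\cdot k_{\eps}=C\bar T$ — one finds
\[
\lambda_{\eps}\int_{0}^{T}\int_{D}|\partial_{t}u_{\eps}|^{2}\leqslant E_{\eps}(u_{\eps}^{0})-E_{\eps}(u_{\eps}(T))+C\leqslant C.
\]
Dividing by $\lambda_{0}$ converts $\lambda_{\eps}$ into $k_{\eps}$ and yields \eqref{kinetic_energy}.

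The main obstacle is the bookkeeping of the forcing terms at the \emph{sharp} level: in Lemma~\ref{Lem:GLEn} one was content to bound everything by $C\log(1/\eps)$, but here every contribution must be either genuinely $O(1)$ or of the form $o(1)\cdot E_{\eps}(u_{\eps}(T))$ (absorbable) or $Ck_{\eps}\int_0^T E_\eps\,dt$ (which is $O(\bar T)$ by the crude bound). The delicate term is $\int_{0}^{T}\int_{D}(\gG,\V(u_{\eps}))$: after the integration by parts used in Lemma~\ref{Lem:GLEn}, the boundary-in-time pieces $\int_{D}(\gG(T),\jj(u_{\eps}(T)))$ must be controlled by $\delta E_{\eps}(u_{\eps}(T))+C_{\delta}\log(1/\eps)$ — but with the $\log(1/\eps)$ now multiplied by an honest $o(1)$ coming from $(\eps^{2}\log(1/\eps)+k_{\eps})$-type prefactors, so that it stays $O(1)$ after using the crude bound again. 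I would therefore present the estimate of each of the three terms from the $\gG$-integration by parts keeping careful track of which prefactors carry a power of $k_{\eps}$ or $\eps$, conclude that their total is $\leqslant \tfrac{\lambda_{\eps}}{4}\int_0^T\!\int_D|\partial_t u_\eps|^2 + C$ for $T\leqslant\bar T$ and $\eps$ small, and then read off \eqref{kinetic_energy}. The choice of $\bar T$ is dictated solely by requiring the vortex configuration to remain non-degenerate (no collision, no boundary contact) on $[0,\bar T]$, which is exactly the short-time existence window for the limiting ODE system and is independent of $\eps$.
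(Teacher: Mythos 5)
Your overall strategy---cancel the $\pi N\log(1/\eps)$ terms between $E_{\eps}(u_{\eps}^{0})$ and a lower bound for $E_{\eps}(u_{\eps}(T))$ coming from \eqref{GammaConv}---is the right heuristic, but as written it is circular. To apply \eqref{GammaConv} at time $T$ you must know that $(u_{\eps}(T))$ still exhibits a configuration of the \emph{same} $N$ vortices with the same degrees; a priori (with only the crude bound of Lemma~\ref{Lem:GLEn} in hand) vortices of opposite degree could annihilate in a time interval of length $o(1)$, in which case $E_{\eps}(u_{\eps}^{0})-E_{\eps}(u_{\eps}(T))$ is of order $\log(1/\eps)$ and the argument collapses. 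The continuity of the vortex trajectories that you propose to invoke is established in Section~\ref{sec:GLDetJacobian} only as a consequence of Corollary~\ref{cor:CompOfTheTotalJacobian}, which itself requires \eqref{kinetic_energy}; the ``reorganization'' you allude to is precisely the missing content. The paper closes this loop by arguing by contradiction: assuming $k_{\eps_{n}}\int_{0}^{t_{n}}\int_{D}|\partial_{t}u_{\eps_{n}}|^{2}=1$ with $t_{n}\to 0$, it rescales time twice and applies the Sandier--Serfaty product estimate \eqref{eq:ProdEst} to the rescaled functions $w_{n}$, for which the normalized kinetic energy is $\tau_{n}\ll\log(1/\eps_{n})$. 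This forces the limiting velocity $\V$ to vanish, hence the vortices provably do not move on the rescaled interval, and only then can \eqref{GammaConv} be applied at the final time to contradict well-preparedness. That rescaling step is the key idea absent from your proposal.

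A second, independent gap is your treatment of $\int_{0}^{T}\int_{D}(\gG,\V(u_{\eps}))$. After the integration by parts of Lemma~\ref{Lem:GLEn}, the boundary-in-time contributions such as $\int_{D}(\gG(T),\jj(u_{\eps}(T)))\,dx$ are controlled only by $C\lVert u_{\eps}(T)\rVert_{L^{2}}\lVert\nabla u_{\eps}(T)\rVert_{L^{2}}\leqslant C\sqrt{\log(1/\eps)}$; they carry no small prefactor of type $\eps^{2}\log(1/\eps)+k_{\eps}$, so no amount of bookkeeping reduces them to $O(1)$. The genuinely $O(1)$ bound \eqref{eq:EstimateOnVG} on this term is obtained in Proposition~\ref{prop:GLEnergyUpperBound} from the relative compactness of $\V(u_{\eps})$, i.e.\ again from Corollary~\ref{cor:CompOfTheTotalJacobian}, downstream of the very estimate you are trying to prove. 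In the paper's contradiction argument this difficulty disappears because the $\gG$-term is rewritten as $\int_{0}^{1}\int_{D}(\gG(t_{n}t),\V(w_{n}))$ and $\V(w_{n})\to 0$ by the product estimate, so it is $o(1)$ for free.
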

\begin{proof}
Our proof is by contradiction. It is a modification of the argument in Lemma III.1 of \cite{SSGamma}. We assume that the claim is false. Then we rescale time in such a way that the vortices do not move, but the energy is dissipated strongly enough. This contradicts the well-preparedness of the initial data \eqref{EnergyAtTimeZero}.

Suppose that the statement of the proposition does not hold. 
Then there exist sequences $t_{n}\to 0$ and $\eps_{n}\to0$ such that  
\[k_{\eps_{n}}\int\limits_{0}^{t_{n}}\int\limits_{D}|\partial_{t}u_{\eps_{n}}|^{2}=1.
\]
For the rest of this proof, we write $u_{n}$ instead of $u_{\eps_{n}}$ and $k_{n}$ instead of $k_{\eps_{n}}$.

We first rescale time by $k _n$ and define the functions $v_{n}(x,t):=u_{n}(x, k_n t)$.
For them, our assumption gives the equality
\begin{equation}\label{RescaledKineticEn1}
\int\limits_{0}^{\tau_{n}}\int\limits_{D}|\partial_{t}v_{n}|^{2}=1,
\end{equation}
with $\tau_{n}:=t_{n}\log(1/\eps_{n})$. 
We rescale time once again by $\tau_{n}$ and define the functions $w_{n}(x,t):=v_{n}(x,\tau_{n}t)$.
Note that $w_{n}(x,t)=u_{n}(x,t_{n}t)$. For $w_{n}$, there holds
\begin{equation}\label{RescaledKineticEn2}
\int\limits_{0}^{1}\int\limits_{D}|\partial_{t}w_{n}|^{2}=\tau_{n}\ll \log(1/\eps_{n})
\end{equation}
and 
\begin{equation}\label{RescaledGLEn2}
E_{\eps}(w_{n}(t))=E_{\eps_{n}}(u_{n}(t_{n}t))\leqslant C\log(1/\eps_{n}).
\end{equation}

Due to Theorem~3 in \cite{SSProd}, the estimates \eqref{RescaledKineticEn2} and \eqref{RescaledGLEn2} imply that the total Jacobian of $w_{n}$ is relatively compact. Hence, the limits $\lim_{{n}\to \infty}J(w_{n}(t))=J(t)$ and $\lim_{n\to \infty}\V(w_{n}(t))=\V(t)$
exist in $(C^{0, \alpha})^{*}$. For them, we have $\partial_{t}J(t)-\curl \V(t)=0$ for every $t\in[0,1]$.
In other words, the vorticity of $w_{n}$ is transported by $\V$. Moreover, $\V$ satisfies the product estimate from \cite{SSProd}
\begin{equation}\label{eq:ProdEst}
\Big| \int\limits_{0}^{1}\int\limits_{D} \varphi\V\cdot\X \Big|\leqslant \liminf\limits_{n\to \infty} k_{n}\Big(\int\limits_{0}^{1}\int\limits_{D}|\X\cdot \nabla w_{n}|^{2}\Big)^{\tfrac{1}{2}}\cdot \Big(\int\limits_{0}^{1}\int\limits_{D}\varphi^{2}|\partial_{t}w_{n}|^{2}\Big)^{\tfrac{1}{2}}.
\end{equation}
The inequality holds for every H\"older continuous scalar function $\varphi$ and every H\"older continuous vector field $\X$. The right-hand side of \eqref{eq:ProdEst} is zero, by \eqref{RescaledKineticEn2} and \eqref{RescaledGLEn2}. Consequently, $\V=0$, and the vortices of $w_{n}$ do not move on the time interval $[0,1]$. They are the same as the vortices of $u_{\eps}^{0}$, and $J(w_{n}(t))\rightarrow \pi\sum_{k=1}^{N}d_{k}\delta_{a^{0}_{k}}$, for every $t\in [0,1]$. 

We return to $v_{n}$. Its vortices do not move until the time $\tau_{n}$. With \eqref{GammaConv}, we have
$E_{\eps_{n}}(v_{n}(\tau_{n}))\geqslant \pi N \log(1/\eps)+N\gamma +W(\vortexvector^{0},\dd)+o(1)= E_{\eps}(u_{\eps}^{0})+o(1)$.
Moreover, 
\begin{equation}\label{RescaledGLEn1}
E_{\eps_{n}}(v_{\eps}(\tau_{\eps}))=E_{\eps}(u_{\eps}(t_{\eps}))\leqslant C\log(1/\eps),
\end{equation}
by Lemma \ref{Lem:GLEn}. 

The functions $v_{n}$ solve an equation of the same form as \eqref{GLConvectionDet}. The only difference is the scaling of the time-derivative. 
Therefore, as in the proof of the Lemma \ref{Lem:GLEn}, we find that 
\begin{multline}\label{eq:Energy evolutionRescaled}
\lambda_{0}\int\limits_{0}^{\tau_{n}}\int\limits_{D}|\partial_{t}v_{n}|^{2}=E_{\eps_{n}}(u_{n}^{0})-E_{\eps_{n}}(v_{n}(\tau_{n}))\\-k_{n}\int\limits_{0}^{\tau_{n}}\int\limits_{D}(\F(k_{n}t),\pp(v_{n}))
+\int\limits_{0}^{\tau_{n}}\int\limits_{D}(\gG(k_{n}t),\V(v_{n})).
\end{multline}
The quantity on the right-hand side in the first line is either positive of order $o(1)$ or negative. Below we show that both integrals in the second line are of order $o(1)$. This leads to a contradiction, which concludes the proof. 

For the first of the integrals we use the Cauchy-Schwartz inequality, \eqref {RescaledKineticEn1} and \eqref{RescaledGLEn1} to obtain
\[\Big| k_{n}\int\limits_{0}^{\tau_{n}}\int\limits_{D}(\F(k_{n}t),\pp(v_{n}))\Big|
\leqslant Ck_{n}\Big(\int\limits_{0}^{\tau_{n}}E_{\eps_{n}}(v_{n}(t))\Big)^{\tfrac{1}{2}}\leqslant 
Ct_{n}^{1/2}=o(1).
\]

In the second integral, we switch to the time-scale of $w_{n}$ and get 
\[\int\limits_{0}^{\tau_{n}}\int\limits_{D}(\gG(k_{n}t),\V(v_{n}))=\int\limits_{0}^{1}\int\limits_{D}(\gG(t_{n}t),\V(w_{n})).
\]
As we already know, $\V(w_{n}(t))\to 0$ for $t\in [0,1]$. Then, this integral is of order $o(1)$.
\qquad\end{proof}

The most important consequence of the energy estimates established so far is
\begin{corollary}\label{cor:CompOfTheTotalJacobian}
The total Jacobian $(J(u_{\eps}), \V(u_{\eps}))$ of $u_{\eps}$ is relatively compact in the space $(C^{0,\alpha}([0,\bar{T}]\times D))^{*}$, for every $\alpha \in(0, 1]$.
\end{corollary}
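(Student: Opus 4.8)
The plan is to deduce Corollary~\ref{cor:CompOfTheTotalJacobian} from the two quantitative bounds already in hand, namely the crude Ginzburg--Landau energy bound $E_{\eps}(u_{\eps}(t))\leqslant C\log(1/\eps)$ of Lemma~\ref{Lem:GLEn} (equivalently $\mu_{\eps}(t)\leqslant C$ after multiplication by $k_{\eps}$) and the kinetic energy bound $k_{\eps}\int_{0}^{\bar T}\int_{D}|\partial_{t}u_{\eps}|^{2}\leqslant C$ of Proposition~\ref{prop:kinetic_energy}, by invoking the compactness part of the Sandier--Serfaty product-estimate machinery (\cite{SSProd}, Theorem~3), exactly as was already done for the rescaled functions $w_{n}$ inside the proof of Proposition~\ref{prop:kinetic_energy}. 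First I would record that these two bounds are precisely the hypotheses of that theorem: a uniform-in-$\eps$ bound on the rescaled energy $k_{\eps}E_{\eps}(u_{\eps}(t))$ on $[0,\bar T]\times D$ together with a uniform bound on $k_{\eps}\int_{0}^{\bar T}\int_{D}|\partial_{t}u_{\eps}|^{2}$; the latter controls the ``time-like'' part of the total energy of the space-time map, so the full space-time energy density $k_{\eps}\big(e_{\eps}(u_{\eps})+\tfrac12|\partial_{t}u_{\eps}|^{2}\big)$ is bounded in $L^{1}([0,\bar T]\times D)$ uniformly in $\eps$.

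Next I would apply the Jacobian compactness result in this space-time setting: under these bounds the total Jacobian $J_{tl}(u_{\eps})$, whose components are $J(u_{\eps})$ and $\V(u_{\eps})$, is precompact in the strong topology of $(C^{0,\alpha}([0,\bar T]\times D))^{*}$ for every $\alpha\in(0,1]$. The argument for this is standard once the energy bounds are available — one uses the $\Gamma$-convergence/Jacobian estimates of Jerrard--Soner and Sandier--Serfaty to show that any subsequence has a further subsequence along which $(J(u_{\eps}),\V(u_{\eps}))$ converges to a measure supported on a rectifiable set, with the $(C^{0,\alpha})^{*}$ norms equibounded and equicontinuous in the relevant sense — but since the statement we need is literally \cite{SSProd}, Theorem~3 (which was already cited and used verbatim for $w_{n}$), I would simply cite it rather than reproduce the argument. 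The only bookkeeping point is that Proposition~\ref{prop:kinetic_energy} gives the kinetic bound on the interval $[0,\bar T]$, so the conclusion of the corollary is correspondingly stated on $[0,\bar T]$.

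Concretely, the steps in order are: (i) multiply the conclusion of Lemma~\ref{Lem:GLEn} by $k_{\eps}$ to get $\mu_{\eps}(t)=k_{\eps}e_{\eps}(u_{\eps}(t))\leqslant C$ uniformly, hence $\int_{0}^{\bar T}\int_{D}\mu_{\eps}\leqslant C$; (ii) combine this with \eqref{kinetic_energy} to see that the space-time energy density $k_{\eps}(e_{\eps}(u_{\eps})+\tfrac12|\partial_{t}u_{\eps}|^{2})$ is bounded in $L^{1}([0,\bar T]\times D)$ independently of $\eps$; (iii) invoke \cite{SSProd}, Theorem~3, to conclude that $(J(u_{\eps}),\V(u_{\eps}))$ is relatively compact in $(C^{0,\alpha}([0,\bar T]\times D))^{*}$ for every $\alpha\in(0,1]$, which is the assertion of the corollary. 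I do not expect a genuine obstacle here: all the analytic content is in Lemma~\ref{Lem:GLEn} and Proposition~\ref{prop:kinetic_energy}, and the corollary is a direct application of an external compactness theorem. The only thing to be mildly careful about is making sure the bounds are stated uniformly over the whole slab $[0,\bar T]\times D$ (not just pointwise in $t$), which follows because the constant $C$ in Lemma~\ref{Lem:GLEn} is uniform on $[0,T]$ for any fixed $T\geqslant\bar T$.
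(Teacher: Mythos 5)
Your proposal is correct and follows essentially the same route as the paper: the paper likewise deduces the corollary directly from \eqref{GLEnergyUpperBoundCrude} and \eqref{kinetic_energy} by invoking a space-time Jacobian compactness theorem (it cites Theorem~5.1 of \cite{JerrardSoner}, with Theorem~3 of \cite{SSProd} playing the same role elsewhere), so the choice of reference is immaterial. The one point you gloss over is that these compactness results are stated for the dual space $(C_{c}^{0,\alpha}([0,\bar{T}]\times D))^{*}$, so one must note, as the paper does, that the proof carries over to the smaller space $(C^{0,\alpha}([0,\bar{T}]\times D))^{*}$.
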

\begin{proof}
The claim follows directly from \eqref{GLEnergyUpperBoundCrude} and \eqref{kinetic_energy} by the virtue of Theorem~5.1 in \cite{JerrardSoner}. Even though the Theorem~is formulated for the space $(C^{0,\alpha}_{c}([0,T]\times D))^{*}$, the proof works for the smaller space $(C^{0,\alpha}([0,T]\times D))^{*}$ as well. See a related discussion in the proof of Theorem~3 in \cite{SSProd}.
\qquad\end{proof}

\begin{proposition}\label{prop:GLEnergyUpperBound}
There exists a constant $C>0$ such that, for all $t\in[0, \bar{T}]$, there holds
\begin{equation}\label{GLEnergyUpperBound}
E_{\eps}(u_{\eps}(t))\leqslant \pi N \log(1/\eps)+C.
\end{equation}
\end{proposition}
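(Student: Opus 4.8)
The plan is to improve the crude bound $E_\eps(u_\eps(t))\leqslant C\log(1/\eps)$ from Lemma~\ref{Lem:GLEn} to the sharp leading-order bound $\pi N\log(1/\eps)+C$, by revisiting the energy evolution identity \eqref{eq:EnergyEvolution} now that we have the optimal kinetic energy estimate \eqref{kinetic_energy} at our disposal. First I would start again from the integrated conservation law
\begin{equation*}
E_{\eps}(u_{\eps}(t))+\lambda_{\eps}\int\limits_{0}^{t}\int\limits_{D}|\partial_{t}u_{\eps}|^{2}
= E_{\eps}(u_{\eps}^{0})
-k_{\eps}\int\limits_{0}^{t}\int\limits_{D}(\F, \pp(u_{\eps}))+\int\limits_{0}^{t}\int\limits_{D}(\gG,\V(u_{\eps})),
\end{equation*}
valid for $t\in[0,\bar T]$. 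The initial energy is $\pi N\log(1/\eps)+O(1)$ by the well-preparedness assumption \eqref{EnergyAtTimeZero}, so the whole point is to show that the two forcing integrals on the right are bounded by a constant \emph{plus a small multiple} of $\sup_{s\leqslant t}E_\eps(u_\eps(s))$, after which a Gronwall/absorption step closes the argument. The dissipation term $\lambda_\eps\int|\partial_t u_\eps|^2$ is nonnegative and can simply be dropped (or kept on the left).

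For the $\F$-term I would use Cauchy--Schwarz exactly as in \eqref{EstOnF}, but now estimate $k_\eps\int_0^t\int_D(\F,\pp(u_\eps))$ by
\[
C\Big(k_\eps\int\limits_{0}^{t}\int\limits_{D}|\partial_{t}u_{\eps}|^{2}\Big)^{1/2}\cdot\Big(k_\eps\int\limits_{0}^{t}\int\limits_{D}|\nabla u_{\eps}|^{2}\Big)^{1/2}\leqslant C\Big(k_\eps\int\limits_{0}^{t}E_{\eps}(u_{\eps}(s))\,ds\Big)^{1/2},
\]
where the first factor is bounded by a constant thanks to \eqref{kinetic_energy}; then Young's inequality turns this into $\tfrac14\sup_{s\leqslant t}E_\eps(u_\eps(s))+C+Ck_\eps\int_0^t E_\eps(u_\eps(s))\,ds$ (the $\sup$ term is absorbed into the left-hand side since $k_\eps t$ is small). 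For the $\gG$-term I would again integrate by parts in $x$ and $t$ as in the proof of Lemma~\ref{Lem:GLEn} to rewrite $\int_0^t\int_D(\gG,\V(u_\eps))$ in terms of $\diverg\gG\cdot(u_\eps\times\partial_t u_\eps)$, $(\partial_t\gG,\jj(u_\eps))$ and the boundary-in-time terms $(\gG(s),\jj(u_\eps(s)))$ at $s=0,t$; each of these is handled by Cauchy--Schwarz together with \eqref{Modulus} and either \eqref{kinetic_energy} or the crude bound \eqref{GLEnergyUpperBoundCrude}, yielding again a contribution of the form $C+o(1)\sup_{s\leqslant t}E_\eps(u_\eps(s))+Ck_\eps\int_0^t E_\eps(u_\eps(s))\,ds$. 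Collecting everything gives
\[
E_\eps(u_\eps(t))\leqslant \pi N\log(1/\eps)+C+Ck_\eps\int\limits_0^t E_\eps(u_\eps(s))\,ds,
\]
and since $Ck_\eps\int_0^{\bar T}\log(1/\eps)\,ds=C\bar T=O(1)$ by \eqref{GLEnergyUpperBoundCrude}, the last term is itself $O(1)$, so \eqref{GLEnergyUpperBound} follows (a bare Gronwall step also works).

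The main obstacle is the $\gG$-driven term: unlike the heat-flow situation, $\V(u_\eps)$ need not be controlled by the dissipation, so one cannot naively bound $\int(\gG,\V(u_\eps))$ by Cauchy--Schwarz against $\int|\partial_t u_\eps|^2$ without losing a factor of $|\log\eps|$. The resolution, as in Lemma~\ref{Lem:GLEn}, is structural: integrating by parts exposes $u_\eps\times\partial_t u_\eps$ (which pairs well with the kinetic energy) and $\jj(u_\eps)$ (which is controlled by $|u_\eps|\,|\nabla u_\eps|$ and hence by $E_\eps$ via \eqref{Modulus}), so that the dangerous term never appears with the wrong power of $|\log\eps|$. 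One must also check that the boundary integral over $\partial D$ vanishes — in the Dirichlet case because $g$ is time-independent, and in the Neumann case because of the compatibility condition \eqref{CondOnG} on $\gG$ — exactly as in the proof of Lemma~\ref{Lem:GLEn}.
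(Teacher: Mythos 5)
Your treatment of the $\F$-term is fine and coincides with the paper's: Cauchy--Schwarz together with \eqref{kinetic_energy} for the kinetic factor and the crude bound \eqref{GLEnergyUpperBoundCrude} for the gradient factor already give $\bigl|k_\eps\int_0^{t}\int_D(\F,\pp(u_\eps))\bigr|\leqslant C$ outright, so your extra Young/absorption step is superfluous but harmless. The genuine gap is in the $\gG$-term. If you repeat the integration by parts of Lemma~\ref{Lem:GLEn} and then apply Cauchy--Schwarz, the best you can get for the first resulting piece is
\[
\Big|\int\limits_{0}^{t}\int\limits_{D}\diverg\gG\cdot(u_\eps\times\partial_t u_\eps)\Big|\leqslant C\Big(\int\limits_{0}^{t}\int\limits_{D}|u_\eps|^2\Big)^{1/2}\cdot\Big(\int\limits_{0}^{t}\int\limits_{D}|\partial_t u_\eps|^2\Big)^{1/2}\leqslant C\cdot\big(Ck_\eps^{-1}\big)^{1/2}=C\sqrt{\log(1/\eps)},
\]
even with the sharp kinetic estimate \eqref{kinetic_energy} in hand; the pieces $(\partial_t\gG,\jj(u_\eps))$ and the time-boundary terms are of the same order, since each pairs one factor of size $\lVert\nabla u_\eps\rVert_{L^2}\sim\sqrt{\log(1/\eps)}$ or $\lVert\partial_t u_\eps\rVert_{L^2}\sim\sqrt{k_\eps^{-1}}$ against $\lVert u_\eps\rVert_{L^2}=O(1)$ from \eqref{Modulus}. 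A remainder of the form $o(1)\cdot\sup_{s\leqslant t}E_\eps(u_\eps(s))$ does not rescue this: after absorbing it into the left-hand side you are still left with an error $o(1)\cdot\log(1/\eps)$, so your argument proves only $E_\eps(u_\eps(t))\leqslant \pi N\log(1/\eps)+C\sqrt{\log(1/\eps)}$. That is strictly weaker than \eqref{GLEnergyUpperBound} and insufficient downstream, because the constant $C$ of this proposition is used as the input $K_0$ in the equipartition result (Proposition~\ref{KurzkeSpirn}) inside the proof of Proposition~\ref{prop:convergence of the stress tensor}.

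The idea you are missing is that by this point one has more than integral bounds on $\V(u_\eps)$: Corollary~\ref{cor:CompOfTheTotalJacobian}, which follows from \eqref{GLEnergyUpperBoundCrude} and \eqref{kinetic_energy} via the Jerrard--Soner compactness theorem, says that $\V(u_\eps)$ is relatively compact, hence bounded, in $(C^{0,\alpha}([0,\bar T]\times D))^{*}$. Pairing with the fixed smooth field $\gG$ then gives $\bigl|\int_0^{\bar T}\int_D(\gG,\V(u_\eps))\bigr|\leqslant C$ directly, with no integration by parts and no loss of $\sqrt{\log(1/\eps)}$; this is exactly \eqref{eq:EstimateOnVG} in the paper and is the step your proposal needs to substitute for the Lemma~\ref{Lem:GLEn}-style estimate.
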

\begin{proof}
We return to \eqref{eq:EnergyEvolution} and set $T=\bar{T}$.
Now we can estimate the integrals containing $\F$ and $\gG$ much more precisely. With \eqref{GLEnergyUpperBoundCrude} and \eqref{kinetic_energy}, we obtain that
\begin{equation*}
\Big|k_{\eps}\int\limits_{0}^{\bar{T}}\int\limits_{D}(\F, \pp(u_{\eps}))\Big|\leqslant C\Big(k_{\eps}\int\limits_{0}^{\bar{T}}\int\limits_{D}|\partial_{t}u_{\eps}|^{2} \Big)^{\tfrac{1}{2}}\cdot\Big(k_{\eps}\int\limits_{0}^{\bar{T}}\int\limits_{D}|\nabla u_{\eps}|^{2}\Big)^{\tfrac{1}{2}}\leqslant C.
\end{equation*}

For the second integral, we use Corollary \ref{cor:CompOfTheTotalJacobian}. Since $\V(u_{\eps})$ is relatively compact and $\gG$ is smooth, 
there exists a constant $C$ such that
\begin{equation}\label{eq:EstimateOnVG}
\Big|\int\limits_{0}^{\bar{T}}\int\limits_{D}(\gG,\V(u_{\eps}))\Big|\leqslant C.
\end{equation}
This gives the inequality $E_{\eps}(u_{\eps}(\bar{T}))\leqslant E_{\eps}(u_{\eps}^{0})+C$. Now, \eqref{GLEnergyUpperBound} is actually a consequence of \eqref{EnergyAtTimeZero}.
\qquad\end{proof}

\section{Vortex trajectories and concentration}\label{sec:GLDetJacobian}
The vortices are the concentration points of the Jacobian. From Corollary~\ref{cor:CompOfTheTotalJacobian}, we deduce in Proposition~\ref{prop:Compactness of the Jacobian} the existence and continuity of the vortex paths. We study the concentration properties of the stress tensor and the momentum of $u_{\eps}$ in Propositions \ref{prop:convergence of the stress tensor} and~\ref{prop:convergence of the momentum}. Proposition~\ref{DivCurrent} is a technical fact, which we need in the next section. It follows from the energy bounds.

\begin{proposition}\label{prop:Compactness of the Jacobian}
Let $\bar{T}$ be the time found in Proposition \ref{prop:kinetic_energy}. For every sequence \newline $\eps_{n}\to 0$, the sequence $(J(u_{\eps_{n}}),\V(u_{\eps_{n}}))$ converges, modulo a subsequence, in the space $(C^{0,\alpha}([0,\bar{T}]\times D))^{*}$. For the limit $(J(t), \V(t))$, we have
\[J(t)=\pi\sum\limits_{k=1}^{N}d_{k}\delta_{\xi_{k}(t)} \text{ and } 
\V(t)=-\pi i\sum\limits_{k=1}^{N}d_{k}\dot{\xi}_{k}(t)\delta_{\xi_{k}(t)}.
\]
The functions $\xi_{k}(t):[0,\bar{T}]\to \R^{2}$ belong to the space $H^{1}(0,\bar{T};D)$. The coefficients $d_{k}$ are integers, with $d_{k}\in \{\pm1\}$.
\end{proposition}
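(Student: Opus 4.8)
The plan is to extract the vortex structure of the limiting total Jacobian from the energy bounds, and then to use the conservation law \eqref{JacobianRelation} to identify the velocity $\V$ in terms of the vortex trajectories. First I would invoke Corollary~\ref{cor:CompOfTheTotalJacobian}: along any sequence $\eps_n\to 0$, a subsequence of $(J(u_{\eps_n}),\V(u_{\eps_n}))$ converges strongly in $(C^{0,\alpha}([0,\bar T]\times D))^*$ to some limit $(J(t),\V(t))$. Passing to the limit in \eqref{JacobianRelation} — which is a linear relation among distributions and is therefore stable under this convergence — yields $\partial_t J(t)-\curl\V(t)=0$ in the distributional sense on $(0,\bar T)\times D$.

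Next I would identify the spatial slices. For each fixed $t$, the bound \eqref{GLEnergyUpperBound} gives $E_{\eps}(u_{\eps}(t))\le \pi N\log(1/\eps)+C$, which is exactly the sharp upper bound forcing at most $N$ vortices; by the Jacobian concentration results of \cite{JerrardSoner} recalled in Section~\ref{sec:RenEn}, the slice $J(t)$ must be of the form $\pi\sum_{k=1}^{N}d_k(t)\,\delta_{\xi_k(t)}$ with $d_k(t)\in\{\pm1\}$ (allowing points to coincide if fewer than $N$ vortices are present, and noting that $\sum|d_k|$ is bounded). The degrees $d_k(t)$ are integer-valued and, since $J(t)$ depends continuously on $t$ in $(C^{0,\alpha})^*$ while the positions $\xi_k(t)$ stay separated on $[0,\bar T]$ (the collision time $T^*\ge\bar T$ can be arranged, or one works on a subinterval), the $d_k$ are locally constant, hence constant; one checks they coincide with the initial degrees via \eqref{JacobianAtTimeZero}. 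It remains to read off $\V(t)$. Writing $J(t)=\pi\sum_k d_k\delta_{\xi_k(t)}$ and differentiating in $t$ distributionally gives $\partial_t J(t)=-\pi\sum_k d_k\,\dot\xi_k(t)\cdot\nabla\delta_{\xi_k(t)}$; comparing with $\curl\V(t)$ and using that $\curl$ of $(-\pi i\sum_k d_k\dot\xi_k\delta_{\xi_k})$ produces precisely $-\pi\sum_k d_k\dot\xi_k\cdot\nabla\delta_{\xi_k}$ (because $\curl(\boldsymbol v\,\delta_{\xi})$ acting on a test function reproduces the directional derivative of $\delta_\xi$ paired against $i\boldsymbol v$, matching the convention $\curl\pphi=-\partial_2\phi^1+\partial_1\phi^2$ and $iu=(-u^2,u^1)$), one concludes $\V(t)=-\pi i\sum_k d_k\dot\xi_k(t)\delta_{\xi_k(t)}$, at least up to a divergence-free part; that part is ruled out because $\V(t)$ must itself be a finite sum of point masses, which follows from the kinetic energy bound \eqref{kinetic_energy} together with the product/compactness estimates from \cite{JerrardSoner, SSProd} controlling $\V(u_\eps)$ by $\mu_\eps$.

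Finally I would establish the regularity $\xi_k\in H^1(0,\bar T;D)$. The key is again the uniform kinetic energy bound \eqref{kinetic_energy}, $k_\eps\int_0^{\bar T}\int_D|\partial_t u_\eps|^2\le C$, which after the standard lower-bound (Jacobian/velocity) inequality of Jerrard--Soner translates into a uniform bound $\int_0^{\bar T}\sum_k|\dot\xi_k(t)|^2\,dt\le C$ on the limiting velocities; combined with the a priori boundedness of $\xi_k(t)$ in $\overline D$, this gives $\xi_k\in H^1$. The continuity of $\xi_k$ as functions of $t$ is then immediate from the $H^1$ embedding in one dimension.

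The main obstacle I expect is the rigorous identification that $\V(t)$ has no extra divergence-free component and is genuinely of the claimed point-mass form with the coefficients $-\pi i d_k\dot\xi_k$: the relation $\partial_tJ-\curl\V=0$ only determines $\V$ up to a gradient, so one must separately use the quantitative control of $\V(u_\eps)$ by the kinetic energy (the structure theorem for the total Jacobian in \cite{JerrardSoner}, whose hypotheses are met thanks to \eqref{GLEnergyUpperBound} and \eqref{kinetic_energy}) to pin down that $\V(t)$ is carried by the same points $\xi_k(t)$ with exactly those coefficients. The bookkeeping of signs and of the rotation by $i$ in matching $\curl\V$ to $\partial_t\bigl(\pi\sum d_k\delta_{\xi_k}\bigr)$ also needs care, but is routine once the structure is in place.
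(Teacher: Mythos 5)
Your proposal is correct and follows essentially the same route as the paper: compactness from Corollary~\ref{cor:CompOfTheTotalJacobian}, the delta-structure of the slices $J(t)$ from the Jerrard--Soner Jacobian theory under the bound \eqref{GLEnergyUpperBound}, constancy of the degrees and $H^1$ regularity of the paths from the Sandier--Serfaty product estimate and mobility bound, and identification of $\V(t)$ through the conservation law \eqref{JacobianRelation}. Your extra care in ruling out the curl-free (gradient) ambiguity in $\V$ by noting that the limit of $\V(u_\eps)$ is itself carried by the points $\xi_k(t)$ is a worthwhile explicit step that the paper's one-line justification leaves implicit (note only the minor slip of calling that ambiguity ``divergence-free'' before correctly calling it a gradient).
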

\begin{proof}
The existence of a convergent subsequence is given by Corollary \ref{cor:CompOfTheTotalJacobian}. The measure $J(t)$ is, for every $t$, a weighted sum of deltas by Theorem~3.1 of \cite{JerrardSoner}. Moreover, Theorem~3 of \cite{SSProd} yields that $J(t)$ belongs to the space $C^{0,1/2}([0,\bar{T}], (C^{0,1})^{*})$. The regularity of the functions $\xi_{k}$ is determined in \cite{SSGamma}, Propositions III.1 and III.2. The last two facts also imply that the coefficients $d_{k}$ are constant. Finally, the form of $\V(t)$ is completely specified by the representation for $J(t)$ via \eqref{JacobianRelation}.
\qquad\end{proof}

In addition, we have a mobility bound on the functions $\xi_{k}(t)$.
\begin{proposition}[\cite{SSProd}, Corollary 7 ]\label{prop: mobility bound}
Let $\xi_{k}(t)$ be the functions found in Proposition \ref{prop:Compactness of the Jacobian}. Then, for all $0\leqslant t_{1}<t_{2}\leqslant \bar{T}$, there holds 
\begin{equation}\label{eq:mobility bound}
\pi \sum\limits_{k=1}^{N}\int\limits_{t_{1}}^{t_{2}}|\dot{\xi}_{k}|^{2}\,dt \leqslant  \liminf\limits_{\eps\to 0}k_{\eps}\int\limits_{t_{1}}^{t_{2}}\int\limits_{D}|\partial_{t}u_{\eps}|^{2}.
\end{equation}
\end{proposition}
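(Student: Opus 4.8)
The statement to prove is Proposition~\ref{prop: mobility bound}, the mobility bound $\pi \sum_k \int_{t_1}^{t_2}|\dot\xi_k|^2 \leqslant \liminf_{\eps\to 0} k_\eps \int_{t_1}^{t_2}\int_D |\partial_t u_\eps|^2$, which is attributed to \cite{SSProd}, Corollary 7.

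\textbf{Plan of proof.} The plan is to derive this from the product estimate \eqref{eq:ProdEst} — exactly the Sandier--Serfaty product estimate used already in the proof of Proposition~\ref{prop:kinetic_energy} — together with the explicit form of the limiting velocity $\V(t) = -\pi i\sum_k d_k \dot\xi_k(t)\,\delta_{\xi_k(t)}$ obtained in Proposition~\ref{prop:Compactness of the Jacobian}. First I would recall that $\V(u_\eps) \to \V$ in $(C^{0,\alpha}([0,\bar T]\times D))^*$ along the subsequence, and that by Corollary~\ref{cor:CompOfTheTotalJacobian} and Lemma~\ref{Lem:GLEn} the hypotheses of the product estimate (energy bound $E_\eps(u_\eps) \leqslant C\log(1/\eps)$ on the whole time interval, plus the finiteness furnished by Proposition~\ref{prop:kinetic_energy}) are in force on $[0,\bar T]$. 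The product estimate then reads, for Hölder continuous $\varphi \geqslant 0$ and Hölder vector field $\X$ (now on the time interval $[t_1,t_2]$ rather than $[0,1]$),
\begin{equation*}
\Big| \int_{t_1}^{t_2}\int_D \varphi\,\V\cdot\X \Big| \leqslant \liminf_{\eps\to 0} k_\eps \Big(\int_{t_1}^{t_2}\int_D |\X\cdot\nabla u_\eps|^2\Big)^{\!1/2}\cdot\Big(\int_{t_1}^{t_2}\int_D \varphi^2|\partial_t u_\eps|^2\Big)^{\!1/2}.
\end{equation*}

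\textbf{Key steps.} The first step is to choose good test objects. One takes $\varphi \equiv 1$ (or a smooth cutoff equal to $1$ on a neighborhood of all the vortex paths, which is harmless since $\V$ is supported on those paths) and lets $\X$ range over Hölder vector fields; plugging in the explicit form of $\V$, the left side becomes $\pi\big|\sum_k \int_{t_1}^{t_2} d_k (i\dot\xi_k(t))\cdot\X(\xi_k(t),t)\,dt\big|$. The second step is to bound the first factor on the right: by the Cauchy--Schwarz inequality in space, $k_\eps\int_D |\X\cdot\nabla u_\eps|^2 \leqslant \|\X\|_\infty^2 \, k_\eps\int_D |\nabla u_\eps|^2 \leqslant \|\X\|_\infty^2 \cdot 2k_\eps E_\eps(u_\eps) \leqslant C\|\X\|_\infty^2$ uniformly on $[0,\bar T]$ by Lemma~\ref{Lem:GLEn}; hence the first factor is $\leqslant C\|\X\|_\infty$. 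Actually, to get the \emph{sharp} constant $\pi$ one needs the refined localized version: one localizes near each $\xi_k$ and uses that near an isolated vortex of degree $d_k$, $k_\eps\int_{B_r} |\X\cdot\nabla u_\eps|^2$ has $\liminf$ bounded by $|\X(\xi_k)|^2$ (the energy of a single vortex against a direction is $|\X(\xi_k)|^2$ after the $\pi\log$ renormalization cancels in the product). Combining the localized estimates and optimizing over $\X$ — e.g.\ taking $\X$ to approximate $i\dot\xi_k(t)/|\dot\xi_k(t)|$ near $\xi_k$, which is legitimate after mollification since $\dot\xi_k \in L^2$ — the left side becomes $\pi \sum_k \int_{t_1}^{t_2} |\dot\xi_k|^2\,dt$ (up to $o(1)$) while the right side becomes $\big(\pi\sum_k\int|\dot\xi_k|^2\big)^{1/2} \cdot \liminf_\eps \big(k_\eps\int_{t_1}^{t_2}\int_D |\partial_t u_\eps|^2\big)^{1/2}$. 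Dividing through by $\big(\pi\sum_k\int|\dot\xi_k|^2\big)^{1/2}$ (the case where this vanishes being trivial) yields the claim.

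\textbf{Main obstacle.} Since this is an invocation of \cite{SSProd}, Corollary~7, the cleanest proof is simply to verify that the hypotheses of that corollary hold in our setting — namely the logarithmic energy bound uniformly in time (Lemma~\ref{Lem:GLEn}), the finiteness of the rescaled kinetic energy (Proposition~\ref{prop:kinetic_energy}), and the convergence of the total Jacobian with $\V$ of the stated form (Proposition~\ref{prop:Compactness of the Jacobian}) — and then quote the corollary verbatim; the convective forcing terms in \eqref{GLConvectionDet} play no role here because the product estimate is a purely kinematic statement about families with bounded energy and velocity, not about the PDE they solve. The one point requiring care, and the main technical obstacle if one insists on a self-contained argument, is the passage from the ``testing'' inequality with arbitrary Hölder $\X$ to the sharp $L^2$-in-time bound with constant exactly $\pi$: this requires the refined product estimate localized at each vortex (so that the single-vortex energy constant $\pi$ enters correctly) together with an approximation argument allowing $\X$ to track the $L^2$, possibly discontinuous, direction field $i\dot\xi_k(t)$ — precisely the content of the derivation in \cite{SSProd} leading to their Corollary~7.
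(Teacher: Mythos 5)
Your proposal is correct and matches the paper, which offers no proof of its own for this proposition but simply cites \cite{SSProd}, Corollary 7; the right move is exactly what you identify in your ``main obstacle'' paragraph, namely to check that the hypotheses of that corollary (the logarithmic energy bound of Lemma~\ref{Lem:GLEn}, the kinetic energy bound of Proposition~\ref{prop:kinetic_energy}, and the convergence of the total Jacobian from Proposition~\ref{prop:Compactness of the Jacobian}) hold in this setting and then quote the result, the convective terms being irrelevant because the product estimate is purely kinematic. Your sketch of the internal derivation (localized product estimate, sharp constant $\pi$, approximation of the direction field) accurately describes what happens inside \cite{SSProd} but is not reproduced in the paper.
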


We use the part of Proposition~\ref{prop:Compactness of the Jacobian} that concerns $\V(u_{\eps})$ in a different, but equivalent, form. Namely, we have that for all $0\leqslant t_{1}<t_{2}\leqslant \bar{T}$ and all smooth vector fields $\w:D\times [0, \bar{T}]\rightarrow \R^{2}$, there holds
\begin{equation}\label{eq:ConvOfVelocity}
\lim\limits_{\eps\to 0}\int\limits_{t_{1}}^{t_{2}}\int\limits_{D} (\V(u_{\eps}(t)), \w)=\pi\sum\limits_{k=1}^{N}d_{k}\int\limits_{t_{1}}^{t_{2}}(\dot{\xi}_{k}(t),i\w({\xi}_{k}(t),t))\,dt.
\end{equation}

The set of paths $\xxi(t)$ coincides with the solution set $\vortexvector(t)$ of \eqref{GLDetVortexMotion} at the time $t=0$. This is due to the condition \eqref{JacobianAtTimeZero} on the initial data. Our aim is to prove that the two sets coincide for all $t\in[0, T^{*})$. We point out that $\xxi(t)$ is produced by some sequence $\eps_{n}\to 0$. Up to the end of Proposition \ref{prop:short-time}, we use the notation $\eps\to 0$ for this particular sequence. Eventually, Proposition \ref{prop:short-time} and Theorem~\ref{ThmGLDet} imply that the family $\xxi(t)$ is independent of the sequence $\eps_{n}\to 0$.
 
In view of \eqref{eq:mobility bound}, we may assume that $\bar{T}$ is smaller than the collision time $T^{*}$. Then, by Proposition \ref{prop:GLEnergyUpperBound} and the definition of the energy excess \eqref{eq:EnergyExcess}, we find that 
\begin{equation}\label{eq:energy excess of order one}
D_{\eps}(t)\ll \log(1/\eps)
\end{equation}
for every $t\in[0,\bar{T}]$.

We now deduce from Proposition \ref {prop:Compactness of the Jacobian} the remaining results on the concentration. 

\begin{proposition}[Concentration of the rescaled energy density]\label{prop:convergence of the energy density}
Suppose that $J(u_{\eps}(t))$ converges as described in Proposition \ref{prop:Compactness of the Jacobian}. Then the rescaled energy density 
$\mu_{\eps}(t)$ \eqref{def:mu} concentrates at the vortex cores:  
\begin{equation}\label{eq:concentration of the energy density}
\lim_{\eps\to 0} \mu_{\eps}(t) = \pi\sum\limits_{k=1}^{N}\delta_{\xi_{k}(t)} \text{ in } (C_{c}^{0,1})^{*},
\end{equation}
for every $t\in [0,\bar{T}]$.
\end{proposition}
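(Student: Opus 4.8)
The plan is to derive \eqref{eq:concentration of the energy density} by combining the energy bounds of Section~\ref{sec:GLDetEnEst} with the already-established concentration of the Jacobian and the $\Gamma$-lower-bound \eqref{GammaConv}. First I would fix $t\in[0,\bar T]$ at which $J(u_\eps(t))\to\pi\sum_k d_k\delta_{\xi_k(t)}$, and extract from Proposition~\ref{prop:GLEnergyUpperBound} the upper bound $E_\eps(u_\eps(t))\le \pi N\log(1/\eps)+C$, so that $\mu_\eps(t)=k_\eps e_\eps(u_\eps(t))$ is a sequence of nonnegative measures with uniformly bounded mass, $\mu_\eps(t)(D)\le \pi N+o(1)$. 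Hence, passing to a subsequence, $\mu_\eps(t)\rightharpoonup\mu$ for some nonnegative measure $\mu$ with $\mu(D)\le\pi N$. The goal is to show $\mu=\pi\sum_k\delta_{\xi_k(t)}$, which by uniqueness of the limit upgrades to convergence of the full family; the choice of test space $(C^{0,1}_c)^*$ matches the regularity in which the Jacobian concentration holds.

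The heart of the argument is a localization/lower-bound step. For each vortex $\xi_k(t)$ and a small radius $r<\rho_{\xxi(t)}$, I would apply the standard local lower bound for the Ginzburg-Landau energy on $B_r(\xi_k(t))$ in terms of the local degree (Jerrard-type lower bounds, as in \cite{JerrardSoner}, or the clearing-out/$\eta$-ellipticity estimates): since $J(u_\eps(t))\to\pi d_k\delta_{\xi_k(t)}$ on $B_r(\xi_k(t))$, the degree of $u_\eps(t)$ on a suitable circle $\partial B_s$ is $d_k$ for small $\eps$, whence $k_\eps\int_{B_r(\xi_k(t))}e_\eps(u_\eps(t))\ge \pi|d_k|^2+o_r(1)=\pi+o_r(1)$. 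Summing over $k$ gives $\mu\big(\overline{B_r(\xi_k(t))}\big)\ge\pi$ for each $k$ on the one hand, while the total mass is at most $\pi N$ on the other; since the $N$ balls are disjoint, this forces $\mu=\sum_k \pi\,\nu_k$ with each $\nu_k$ a probability measure supported in $\overline{B_r(\xi_k(t))}$ and no mass escaping to $\partial D$ or to the region away from the vortices. Letting $r\to0$ pins $\nu_k=\delta_{\xi_k(t)}$, which is the claim. Alternatively, and perhaps more cleanly, one invokes \eqref{GammaConv}: the lower bound $\liminf_\eps(E_\eps(u_\eps(t))-\pi N\log(1/\eps))\ge N\gamma+W(\xxi(t))>-\infty$ together with the matching upper bound shows the energy is ``tight'' at scale $\log(1/\eps)$, which is exactly the hypothesis under which the rescaled energy density is known to concentrate as a sum of equal-mass Diracs at the vortices (this is the content of the concentration results of \cite{JerrardSoner}; see also the analogous statement in \cite{KMM_Spin}).

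I expect the main obstacle to be the absence of the pointwise bound $|u_\eps|\le1$, noted in Section~\ref{sec:DetGLExistence}: without it one cannot directly quote the cleanest forms of the local lower bounds, and one must instead use versions that are robust to $|u_\eps|$ exceeding $1$ (e.g. the Jerrard lower bounds, which only require the energy upper bound \eqref{upper_bound} and not a modulus constraint), or control the set $\{|u_\eps|>1\}$ using the crude bound \eqref{GLEnergyUpperBoundCrude} and the potential term $\tfrac{1}{4\eps^2}(1-|u_\eps|^2)^2$ in $e_\eps$. A secondary technical point is ensuring there is no energy concentration on $\partial D$; in the Dirichlet case this is ruled out by the boundary data being $\sS^1$-valued and smooth, and in the Neumann case by the standard boundary $\eta$-ellipticity estimates. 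Once the local lower bounds are in hand, the matching of total masses is immediate from Proposition~\ref{prop:GLEnergyUpperBound}, and the conclusion \eqref{eq:concentration of the energy density} follows; since the limit is uniquely identified, the convergence holds for the whole family and not merely along subsequences.
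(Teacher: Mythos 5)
Your proposal is correct, but it is worth noting that the paper does not actually argue this point: its ``proof'' is a bare citation of \cite{CollianderGLStability}, Theorem~1.4.4 (the weak stability of vortices), which states precisely that the sharp energy upper bound $E_{\eps}(u_{\eps}(t))\leqslant \pi N\log(1/\eps)+C$ of Proposition~\ref{prop:GLEnergyUpperBound} together with the Jacobian concentration of Proposition~\ref{prop:Compactness of the Jacobian} forces $\mu_{\eps}(t)\to\pi\sum_{k}\delta_{\xi_{k}(t)}$. What you have written is essentially a reconstruction of the proof of that cited result: mass comparison between the $O(1)$ upper bound on $\mu_{\eps}(t)(D)$ and the local lower bounds $k_{\eps}\int_{B_{r}(\xi_{k}(t))}e_{\eps}\geqslant \pi+o(1)$ coming from the nontrivial degree near each $\xi_{k}(t)$, which pins down the limit measure and rules out mass escaping elsewhere. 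Your two flagged technical points are also the right ones: the lower bounds must be taken in the Jerrard--Sandier form that requires only the logarithmic energy bound and not $|u_{\eps}|\leqslant 1$ (which, as Section~\ref{sec:DetGLExistence} notes, is unavailable here), and boundary concentration must be excluded. The trade-off is the usual one: the citation is shorter and delegates these technicalities to the literature, while your self-contained route makes visible exactly which estimates (sharp upper bound, local degree lower bound) the conclusion rests on; both are valid, and your argument would serve as an acceptable replacement for the citation.
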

\begin{proof}
See \cite{CollianderGLStability}, Theorem~1.4.4. We denote the limiting measure $\pi\sum\limits_{k=1}^{N}\delta_{\xi_{k}(t)}$ by $\mu(t)$.
\qquad\end{proof}
\begin{proposition}[Concentration of the stress tensor]\label{prop:convergence of the stress tensor}
For every $t\in [0,\bar{T}]$ and for every smooth vector field $\vv:\, D\to \R^{2}$, there holds 
 \begin{equation}\label{eq:concentration of the stress tensor}
 \lim\limits_{\eps\to 0}\int\limits_{D}\vv\cdot k_{\eps}(\nabla u_{\eps}(t)\otimes \nabla u_{\eps}(t))\,dx= \pi\sum\limits_{k=1}^{N}\vv(\xi_{k}(t)).
\end{equation}
\end{proposition}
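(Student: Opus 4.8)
The plan is to exploit the decomposition of the stress tensor into its trace part, which is controlled by the energy density, and its trace-free part, which carries the geometric information. First I would recall the algebraic identity relating the stress tensor to the current and the modulus: for a map $u_\eps$ one has $(\nabla u_\eps \otimes \nabla u_\eps)_{jk} = (\jj(u_\eps)\otimes\jj(u_\eps))_{jk}/|u_\eps|^2 + \text{terms involving } \nabla|u_\eps|$, or more directly work with the splitting $k_\eps(\nabla u_\eps\otimes\nabla u_\eps) = k_\eps|\nabla u_\eps|^2 \,\tfrac12\Id + k_\eps\big[(\nabla u_\eps\otimes\nabla u_\eps) - \tfrac12|\nabla u_\eps|^2\Id\big]$. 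The first summand, tested against $\vv$, is essentially $\int_D (\diverg\vv)\, k_\eps e_\eps(u_\eps)$ up to the potential term, which vanishes in the limit, plus lower-order contributions; by Proposition \ref{prop:convergence of the energy density} this converges to $\pi\sum_k (\text{something involving }\vv\text{ at }\xi_k)$. So the heart of the matter is the trace-free part.

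For the trace-free part I would use the standard fact (as in \cite{JerrardSpirnRefined} or \cite{KMMS}) that away from the vortex cores $u_\eps$ is close, in a suitable sense, to the canonical harmonic map $u_*$, and that the energy excess bound \eqref{eq:energy excess of order one}, together with the concentration of $\mu_\eps$, forces $k_\eps(\nabla u_\eps\otimes\nabla u_\eps)$ to have no mass outside arbitrarily small balls $B_s(\xi_k)$ in the limit. Inside each ball one uses that the trace-free part of the stress tensor of $u_*^{(\vortexvector,\dd)}$ integrates, against an affine test field, to a bounded quantity (indeed this is exactly the content of Proposition \ref{prop:GradOfW}), while the $\log(1/s)$ divergence sits in the trace part only. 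The key point is that for the \emph{trace-free} part there is no logarithmic divergence as $s\to 0$, so one can localize: choosing $\vv$ to be the restriction of a test field that is affine on each $B_s(\xi_k)$ and matching it to $\vv(\xi_k)$ up to $O(s)$ errors, the trace-free contribution is $O(s) + o(1)$ and disappears.

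The cleanest way to organize this is: (i) fix $t$ and a smooth $\vv$; (ii) for small $s>0$, split $D = \bigcup_k B_s(\xi_k) \cup \big(D\setminus\bigcup_k B_s(\xi_k)\big)$; (iii) on the complement, use the lower bound \eqref{GammaConv}/excess estimate to show $k_\eps\int_{D\setminus\cup B_s}|\nabla u_\eps|^2 \to \pi\sum_k\log(1/s) + W(\vortexvector) + o_s(1)$ while the trace-free part there converges to the corresponding quantity for $u_*$, which is $O_s(1)$ bounded; (iv) on $B_s(\xi_k)$, replace $\vv$ by its value $\vv(\xi_k)$ at the center (error $O(s)\,k_\eps\int_{B_s}|\nabla u_\eps|^2 = O(s\log(1/s))\to 0$ as $\eps\to0$ then $s\to0$), so that the contribution is $\vv(\xi_k)\cdot k_\eps\int_{B_s(\xi_k)}(\nabla u_\eps\otimes\nabla u_\eps)$, whose trace converges to $\pi\log(1/s)\cdot 2\cdot\tfrac12 + O_s(1)$ hmm — actually the trace gives $\vv(\xi_k)\cdot \tfrac12\Id\cdot(\text{energy in }B_s)$ which must be balanced against the complement; (v) add everything and let $s\to0$, observing that all the $\log(1/s)$ and $W$ contributions cancel or are absorbed into the $\mu_\eps$-limit, leaving exactly $\pi\sum_k\vv(\xi_k)$. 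I expect the main obstacle to be step (iv)–(v): making the bookkeeping between the trace part (which carries the divergence and must be handled via $\mu_\eps$) and the trace-free part (which is finite and handled via $u_*$) fully rigorous, i.e. showing that the limit of $k_\eps(\nabla u_\eps\otimes\nabla u_\eps)$ as a measure is precisely $\pi\sum_k\delta_{\xi_k}\,\tfrac12\Id$ plus a trace-free part that integrates to zero against any test field vanishing to first order at the $\xi_k$ — this last assertion is what one really needs, and it follows by combining Proposition \ref{prop:convergence of the energy density} with the structure of $u_*$ near its singularities, but the uniformity in $\eps$ of the near-core comparison requires the energy excess estimate \eqref{eq:energy excess of order one} in an essential way.
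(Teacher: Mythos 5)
Your overall skeleton agrees with the paper's: first use the compactness coming from Proposition~\ref{prop:GLEnergyUpperBound} together with the concentration of $\mu_{\eps}$ (Proposition~\ref{prop:convergence of the energy density}) to conclude that $k_{\eps}(\nabla u_{\eps}\otimes\nabla u_{\eps})$ can only charge the points $\xi_{k}(t)$, so the limit is $\sum_{k}A_{k}(t)\delta_{\xi_{k}(t)}$ for some matrices $A_{k}(t)$; then identify $A_{k}(t)=\pi\Id$. The trace part of your decomposition is fine and gives $\trace A_{k}=2\pi$ (after checking that $k_{\eps}\eps^{-2}(1-|u_{\eps}|^{2})^{2}$ does not contribute). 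The genuine gap is in the identification of the trace-free part. What you need is that the trace-free part of $k_{\eps}(\nabla u_{\eps}\otimes\nabla u_{\eps})$ carries no mass at the vortex in the limit, and this is a statement about $u_{\eps}$ \emph{inside} the core region $B_{s}(\xi_{k})$, precisely where the $\log(1/s)$ energy lives and where $u_{\eps}$ is \emph{not} close to the canonical harmonic map $u_{*}$ (which is singular there). Your appeal to Proposition~\ref{prop:GradOfW} does not help here: that identity concerns $\jj(u_{*})\otimes\jj(u_{*})$ away from the singularities and computes $\partial_{a_{k}}W$; it says nothing about the core contribution of $u_{\eps}$ itself. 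Your own step (iv)--(v) bookkeeping stalls at exactly this point, and the closing assertion (``the limit is $\pi\sum_{k}\delta_{\xi_{k}}\Id$ plus a trace-free part that integrates to zero'') is asserted rather than proved; the bound \eqref{eq:energy excess of order one} alone ($D_{\eps}\ll\log(1/\eps)$) does not by itself yield angular equidistribution of the gradient in the core.

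The paper closes this gap by importing the quantitative equipartition theorem of Kurzke and Spirn (Proposition~\ref{KurzkeSpirn} in the appendix): under the hypotheses that $J(u_{\eps})$ is close to $\pi\delta_{0}$ in $(C_{c}^{0,1}(B_{\sigma}))^{*}$ and that $\int_{B_{\sigma}}e_{\eps}(u_{\eps})\leqslant\pi\log(\sigma/\eps)+K_{0}$ --- both verified here via Propositions~\ref{prop:Compactness of the Jacobian}, \ref{prop:convergence of the energy density} and \ref{prop:GLEnergyUpperBound} --- one gets $\big|\int_{B_{\sigma}}k_{\eps}(\nabla u_{\eps}\otimes\nabla u_{\eps})-\pi\Id\,dx\big|\leqslant K_{1}\sqrt{k_{\eps}}$, which gives $A_{k}=\pi\Id$ directly. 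If you want to avoid citing that result, you would essentially have to reprove it (via the refined Jacobian estimates of Jerrard--Spirn), which is a substantial piece of analysis and not a few lines of bookkeeping. So either invoke Proposition~\ref{KurzkeSpirn} explicitly at the point where you identify the core contribution, or acknowledge that this is the nontrivial input your argument is missing.
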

\begin{proof}
Due to Proposition \ref{prop:GLEnergyUpperBound}, the family $(k_{\eps}(\nabla u_{\eps}(t)\otimes \nabla u_{\eps}(t)))$ is relatively compact in the space $(C_{c}^{0,1})^{*}$. Moreover, due to Proposition \ref{prop:convergence of the energy density}, for every Borel set $A\subset D\setminus \{\xxi(t)\}$, there holds
\[\int\limits_{A}k_{\eps}|(\partial_{j}u_{\eps}(t), \partial_{l}u_{\eps}(t))|\,dx \leqslant \int\limits_{A} \mu_{\eps}(t) \,dx =o(1).
\]
Therefore, the stress tensor converges to the measure of the form $\sum_{k=1}^{N}A_{k}(t)\delta_{\xi_{k}(t)}$,
where $A_{k}(t)$ are some $2\times2$ matrices. It remains to show that $A_{k}(t)=\pi \Id$, for all $k$ and $t$. 
To this end, we use the result of Kurzke and Spirn on the equipartition of the Ginzburg-Landau energy (Proposition \ref{KurzkeSpirn}). 

We fix $t$, set  $\sigma=\tfrac{\rho_{\xxi(t)}}{3}$ and apply Proposition \ref{KurzkeSpirn} in $B_{\sigma}(\xi_{k}(t))$, for a $k\in\{1,..., N\}$. The assumptions are satisfied for all sufficiently small $\eps$, due to Propositions \ref{prop:Compactness of the Jacobian} and~\ref{prop:convergence of the energy density}. Moreover, we may take as $K_{0}$ the constant $C$ from Proposition \ref{prop:GLEnergyUpperBound}. Consequently, the constant is the same for all $t$ and $k$. We finally obtain
\[\Big| \int\limits_{B_{\sigma}}A_{k}(t)-\pi\Id dx\Big|\leqslant\lim\limits_{\eps\to0} \Big| \int\limits_{B_{\sigma}}k_{\eps}(\nabla u_{\eps}\otimes \nabla u_{\eps})-\pi\Id dx\Big|\leqslant \lim\limits_{\eps\to0}\frac{C}{\sqrt{\log(1/\eps)}}=0.
\]
\qquad\end{proof}

\begin{proposition}[Concentration of the momentum]\label{prop:convergence of the momentum}
For all $0\leqslant t_{1}<t_{2}\leqslant \bar{T}$ and all smooth vector fields $\w:D\times [0, \bar{T}]\rightarrow \R^{2}$, there holds
\begin{equation}\label{eq:concentration of the momentum}
 \lim\limits_{\eps\to 0}\int\limits_{t_{1}}^{t_{2}}\int\limits_{D} (k_{\eps}\pp(u_{\eps}(t)), \w) =-\pi\sum\limits_{k=1}^{N}\int\limits_{t_{1}}^{t_{2}}(\dot{\xi}_{k}(t),\w({\xi}_{k}(t),t))\, dt.
\end{equation}
\end{proposition}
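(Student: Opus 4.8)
The plan is to derive the concentration of the momentum $\pp(u_{\eps})$ from the already-established concentration of the velocity of the Jacobian $\V(u_{\eps})$, using the pointwise identity that links $\pp(u_{\eps})$, $\V(u_{\eps})$, the current $\jj(u_{\eps})$, and a divergence term, together with the fact that $1-|u_{\eps}|^2$ is controlled in $L^2$ by $\eps^2 E_{\eps}$. Recall from the definition of the velocity of the Jacobian that $V^{j}(u_{\eps})=\partial_t(u_{\eps}\times\partial_j u_{\eps})-\partial_j(u_{\eps}\times\partial_t u_{\eps})$, while the $j$-th component of the momentum is $(\partial_t u_{\eps},\partial_j u_{\eps})$. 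Expanding both, one finds a pointwise relation of the schematic form
\begin{equation*}
(\partial_t u_{\eps},\partial_j u_{\eps})=-V^{j}(u_{\eps})+\partial_t\big(\text{something involving }u_{\eps}\times\partial_j u_{\eps}\big)+\partial_j\big(\text{something}\big)+\text{lower order},
\end{equation*}
but a cleaner route is to observe that $\partial_t(u_{\eps}\times\partial_j u_{\eps})=(\partial_t u_{\eps}\times\partial_j u_{\eps})+(u_{\eps}\times\partial_{jt}u_{\eps})$ and $(\partial_t u_{\eps},\partial_j u_{\eps})=(i\partial_t u_{\eps}\times\partial_j u_{\eps})$ up to reordering, so that the momentum differs from $-i\V(u_{\eps})$ only by exact derivatives in $x$ and $t$ plus terms that vanish as $\eps\to0$ because $|u_{\eps}|\to1$. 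I would first write out this identity carefully, isolating the ``main term'' proportional to $\V(u_{\eps})$ and a remainder that is a sum of a spatial divergence, a time derivative of a boundary-in-time quantity, and a term multiplied by $1-|u_{\eps}|^2$.

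Next, I would test the identity against $k_{\eps}\w$ and integrate over $[t_1,t_2]\times D$. The main term $\int_{t_1}^{t_2}\int_D k_{\eps}(-i\V(u_{\eps}),\w)$ converges by \eqref{eq:ConvOfVelocity} (applied with $i\w$ in place of $\w$, using $(\dot\xi_k,i(i\w))=-(\dot\xi_k,\w)$) to $-\pi\sum_k d_k\int_{t_1}^{t_2}(\dot\xi_k,\w(\xi_k,t))\,dt$; wait — I must be careful with signs and with the role of $d_k$. Since Proposition~\ref{prop:Compactness of the Jacobian} gives $\V(t)=-\pi i\sum_k d_k\dot\xi_k(t)\delta_{\xi_k(t)}$, the contribution from the $\V$-term already carries the correct $\pi\sum_k\int(\dot\xi_k,\w(\xi_k,t))$ structure once the factor $-i$ and the pairing are tracked, and the $d_k^2=1$ collapses the degree. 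The spatial-divergence remainder, after integration by parts in $x$, becomes $-\int\int k_{\eps}(\text{stress-tensor-type term}):\nabla\w$ plus a boundary integral on $\partial D$; the boundary term vanishes because of the boundary conditions on $u_{\eps}$ and the structure of $\w$ (or can be made to vanish by choosing $\w$ compactly supported first and then approximating), and the interior term I expect to be $o(1)$ — this is where I would invoke the energy excess bound \eqref{eq:energy excess of order one} together with the equipartition/concentration machinery behind Propositions~\ref{prop:convergence of the energy density} and~\ref{prop:convergence of the stress tensor}, since a genuine divergence of the stress tensor integrates to something controlled by the excess rather than by the full $\log(1/\eps)$. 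The time-derivative remainder reduces to evaluating $\int_D k_{\eps}(u_{\eps}\times\partial_j u_{\eps})\,w^j$ at $t=t_2$ and $t=t_1$, i.e. the current paired against $\w$; since $k_{\eps}\jj(u_{\eps})(t)\to 2\pi$-type limit is not quite what appears — rather $k_{\eps}|\jj(u_{\eps})|\lesssim k_{\eps}|u_{\eps}||\nabla u_{\eps}|$, and Cauchy--Schwarz with the energy bound gives $k_{\eps}\int_D|\jj(u_{\eps})|\lesssim k_{\eps}(E_{\eps})^{1/2}(\mathcal L^2(D))^{1/2}\to0$, so this endpoint term vanishes. Finally the $(1-|u_{\eps}|^2)$-weighted remainder is handled by Cauchy--Schwarz and $\int_D(1-|u_{\eps}|^2)^2\le 4\eps^2 E_{\eps}\le C\eps^2\log(1/\eps)$ against the $L^2$-bound $k_{\eps}\int\int|\nabla u_{\eps}|^2\le C$ or $k_{\eps}\int\int|\partial_t u_{\eps}|^2\le C$ from \eqref{kinetic_energy}.

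I expect the main obstacle to be showing that the spatial-divergence remainder — an integral of the form $\int_{t_1}^{t_2}\int_D k_{\eps}\,\partial_j(\text{quadratic in }\nabla u_{\eps})\,w$ — is $o(1)$ rather than merely $O(1)$. Naively the quadratic term is of size $\mu_{\eps}\sim\pi\sum_k\delta_{\xi_k}$ in the limit, so its spatial derivative tested against $\w$ would produce $-\pi\sum_k\partial_j\w$-type terms, which are genuinely present and must be shown to cancel against a matching contribution hidden in the decomposition, OR the term must be identified as a true remainder that the excess estimate kills. The resolution is that the relevant combination appearing multiplied by the divergence is antisymmetric or is exactly $u_{\eps}\times\partial_j u_{\eps}$ (the current), not the energy density, and the current concentrates to zero after rescaling by $k_{\eps}$ as noted above; so the key is to organize the pointwise identity so that the only $x$-divergence that survives is of the current, not of the stress tensor. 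I would therefore spend the most care on the algebraic step: writing $(\partial_t u_{\eps},\partial_j u_{\eps})=-V^{j}(u_{\eps})-\partial_j(u_{\eps}\times\partial_t u_{\eps})+\partial_t(u_{\eps}\times\partial_j u_{\eps})$ exactly (this is just the definition of $V^j$ rearranged, noting $(\partial_t u_{\eps},\partial_j u_{\eps})=-(u_{\eps}\times\partial_{jt}u_{\eps})+\partial_j(\ldots)+\ldots$ — to be checked), so that the three remainder pieces are precisely a $V$-term (handled by \eqref{eq:ConvOfVelocity}), an $x$-derivative of $u_{\eps}\times\partial_t u_{\eps}$, and a $t$-derivative of the current $u_{\eps}\times\partial_j u_{\eps}$. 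The first remainder, $\int\int k_{\eps}\,\partial_j(u_{\eps}\times\partial_t u_{\eps})\,w^j = -\int\int k_{\eps}(u_{\eps}\times\partial_t u_{\eps})\,\partial_j w^j+\text{bdry}$, is $o(1)$ by Cauchy--Schwarz using $k_{\eps}\int\int|\partial_t u_{\eps}|^2\le C$ and $k_{\eps}\int\int|u_{\eps}|^2\to0$; the second, the current endpoint term, is $o(1)$ as above. Collecting, only the $V$-term survives and yields exactly \eqref{eq:concentration of the momentum}.
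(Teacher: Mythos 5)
There is a fatal gap at the heart of your argument: the ``algebraic step'' you propose to spend the most care on cannot be carried out. By the very definition of the velocity of the Jacobian, $V^{j}(u_{\eps})=\partial_{t}(u_{\eps}\times\partial_{j}u_{\eps})-\partial_{j}(u_{\eps}\times\partial_{t}u_{\eps})$, so the right-hand side of your proposed identity $(\partial_{t}u_{\eps},\partial_{j}u_{\eps})=-V^{j}(u_{\eps})-\partial_{j}(u_{\eps}\times\partial_{t}u_{\eps})+\partial_{t}(u_{\eps}\times\partial_{j}u_{\eps})$ is identically zero. More fundamentally, expanding the definition gives $V^{j}=2\,(\partial_{t}u_{\eps}\times\partial_{j}u_{\eps})=2(i\partial_{t}u_{\eps},\partial_{j}u_{\eps})$, whereas the momentum is $p^{j}=(\partial_{t}u_{\eps},\partial_{j}u_{\eps})$; writing $\partial_{t}u_{\eps}$ and $\partial_{j}u_{\eps}$ in the frame $\{u_{\eps},iu_{\eps}\}$ shows these are two \emph{independent} bilinear quantities (essentially the real and imaginary parts of $\overline{\partial_{t}u_{\eps}}\,\partial_{j}u_{\eps}$), and no pointwise identity modulo exact derivatives and $(1-|u_{\eps}|^{2})$-terms converts one into the other. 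The mismatch is already visible in the limits: $\V(t)=-\pi i\sum_{k}d_{k}\dot{\xi}_{k}\delta_{\xi_{k}}$ carries the degree $d_{k}$ and a rotation by $i$, while the momentum limit $-\pi\sum_{k}\dot{\xi}_{k}\delta_{\xi_{k}}$ carries neither; there is no second factor of $d_{k}$ anywhere in your computation to produce the $d_{k}^{2}=1$ you invoke, so even formally the answer would come out wrong.

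The concentration of the momentum is not a topological fact deducible from the Jacobian; it genuinely requires the PDE. The paper obtains it from the energy conservation law \eqref{ConsOfEn}: multiplying by $k_{\eps}\phi$ and integrating, the terms $\lambda_{\eps}k_{\eps}|\partial_{t}u_{\eps}|^{2}$, $k_{\eps}^{2}(\F,\pp(u_{\eps}))$ and $k_{\eps}(\gG,\V(u_{\eps}))$ all vanish in the limit by \eqref{kinetic_energy}, \eqref{EstOnF} and \eqref{eq:EstimateOnVG}, leaving the distributional transport identity $-\int\int\nabla\phi\,d\pp=\big[\int\phi\,d\mu\big]_{t_{1}}^{t_{2}}-\int\int\partial_{t}\phi\,d\mu$. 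Combined with the compactness of $(k_{\eps}\pp(u_{\eps}))$ (from \eqref{kinetic_energy} and \eqref{GLEnergyUpperBound}) and the fact that the limit must be supported on the trajectories $\xi_{k}(t)$ (from the concentration of the energy density), testing with $\vvarphi(x-\xi_{k}(t))$, $\vvarphi(x)=x$ near the origin, identifies $\pp_{k}=-\pi\dot{\xi}_{k}$. If you want to salvage your outline, you must replace the vacuous algebraic decomposition by an actual use of the equation, for instance via \eqref{ConsOfEn} as above, and you must separately establish that the limiting momentum is a sum of Dirac masses at the $\xi_{k}$ before you can identify its coefficients.
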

\begin{proof}
The family $(k_{\eps}\pp(u_{\eps}))$ is precompact in the space $(C^{0,1}(D\times [0, \bar{T}]))^{*}$, grace to \eqref {kinetic_energy} and \eqref{GLEnergyUpperBound}. Moreover,  due to \eqref{eq:concentration of the energy density}, the limiting measure-valued vector field $\pp$ has the form
$\pp=\pi \sum_{k=1}^{N}\pp_{k}(t) \delta_{\xi_{k}(t)}$. The functions $\pp_{k}:[0, \bar{T}]\to \R^{2}$ satisfy
$\smallint_{0}^{\bar{T}}|\pp_{k}|\cdot| \dot{\xi}_{k}|\,dt<\infty$ for every $k\in \{1,..., N\}$.
It remains to prove that $\pp_{k}=-\pi\dot{\xi}_{k}$ almost everywhere. 

We observe that for any smooth function $\phi(x,t)$, 
there holds
\begin{equation}\label{DifferenceBetweenEnDensityAndFluidTensor}
 - \int\limits_{t_{1}}^{t_{2}}\int\limits_{D}\nabla\phi\, d\pp=\int\limits_{D} \phi(x, t_{2}) \, d\mu(t_{2})-\int\limits_{D} \phi(x, t_{1}) \, d\mu(t_{1})- \int\limits_{t_{1}}^{t_{2}}\int\limits_{D} \partial_{t}\phi(x,t) \, d\mu(t)\,dt.
\end{equation}

Indeed, we multiply \eqref{ConsOfEn} with $k_{\eps}\phi(x,t)$ and integrate over $D\times [t_{1},t_{2}]$. We obtain  
\begin{multline*}
\lambda_{\eps}k_{\eps}\int\limits_{t_{1}}^{t_{2}}\int\limits_{D}|\partial_{t}u_{\eps}|^{2}\phi +k_{\eps}^{2} \int\limits_{t_{1}}^{t_{2}}\int\limits_{D}(\F, \pp(u_{\eps}(t))) \phi -k_{\eps} \int\limits_{t_{1}}^{t_{2}}\int\limits_{D}
(\gG,\V(u_{\eps}))\phi  \\
=-k_{\eps}\int\limits_{t_{1}}^{t_{2}}\int\limits_{D} \partial_{t}e_{\eps}(u_{\eps})\phi  + k_{\eps}\int\limits_{t_{1}}^{t_{2}}\int\limits_{D} \diverg\pp(u_{\eps}) \phi. 
\end{multline*}
All terms in the first line converge to zero as $\eps$ goes to $0$. This follows from \eqref{kinetic_energy}, \eqref{EstOnF}, and \eqref{eq:EstimateOnVG}. We integrate by parts in the second line and arrive at \eqref{DifferenceBetweenEnDensityAndFluidTensor}.

We construct a special test function $\phi(x,t)$. For
$\rho_{0}:=\inf\{\rho_{\xxi(t)}, t\in [0,\bar{T}]\}>0
$, 
we set $\sigma=\rho_{0}/2$. We fix a smooth function $\vvarphi(x)$ such that 
$\vvarphi(x)=x $ in $B_{\sigma}(0)$ and $\vvarphi(x)=0$ outside of $B_{2\sigma}(0)$. We finally set $\pphi(x,t):=\vvarphi(x-\xi_{k}(t))$, for a fixed $k\in\{1,..., N\}$. When we consider \eqref{DifferenceBetweenEnDensityAndFluidTensor} with the test functions $\phi^{j}$, $j=1,2$, the identity takes the form
\[- \int_{t_{1}}^{t_{2}}p_{k}^{j}(t)\,dt=\pi\int_{t_{1}}^{t_{2}}\dot{\xi}^{j}_{k}(t) \, dt
.\]
The equality holds for all $[t_{1},t_{2}]\subset[0, \bar{T}]$. Thus $\pp_{k}=-\pi\dot{\xi}_{k}$ almost everywhere, as required. 
\qquad\end{proof}

\begin{proposition}
\label{DivCurrent}
The quantity $\diverg \jj(u_{\eps})$ converges to $0$ in the dual space of $W^{1,4}_{0}(D\times[0,\bar{T}])$.
\end{proposition}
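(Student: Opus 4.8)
The plan is to derive the claim from the conservation law for the mass \eqref{ConsOfMass}, which we rewrite as an expression for $\diverg\jj(u_\eps)$. Solving \eqref{ConsOfMass} for the divergence gives
\[
\diverg\jj(u_\eps)=\partial_t\big(\tfrac{1-|u_\eps|^2}{2}\big)+\lambda_\eps(u_\eps\times\partial_t u_\eps)+k_\eps(\F,\jj(u_\eps))+(\gG,\nabla(\tfrac{1-|u_\eps|^2}{2})).
\]
I would test this against an arbitrary $\psi\in W^{1,4}_0(D\times[0,\bar T])$ and show each of the four resulting terms tends to zero. For the first term I integrate by parts in $t$, moving the derivative onto $\psi$ (the boundary contributions at $t=0$ and $t=\bar T$ also appear and are handled the same way), so it suffices to control $\int (1-|u_\eps|^2)\partial_t\psi$ and $\int (1-|u_\eps|^2)\psi$; by Hölder these are bounded by $\|1-|u_\eps|^2\|_{L^{4/3}}\|\psi\|_{W^{1,4}}$, and $\|1-|u_\eps|^2\|_{L^2}^2\le 4\eps^2 E_\eps(u_\eps)\le C\eps^2\log(1/\eps)$ by \eqref{GLEnergyUpperBound}, which together with the uniform $L^\infty$-in-time bound and interpolation forces this to be $o(1)$. (One uses $\mathcal L^2(D)<\infty$ to pass between $L^2$ and $L^{4/3}$.)

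For the second term, Cauchy–Schwarz gives $\lambda_\eps\int|u_\eps\times\partial_t u_\eps|\,|\psi|\le \lambda_\eps\big(\int|u_\eps|^2|\psi|^2\big)^{1/2}\big(\int|\partial_t u_\eps|^2\big)^{1/2}$; using $\lambda_\eps=\lambda_0 k_\eps$, the kinetic energy bound \eqref{kinetic_energy}, the estimate \eqref{Modulus}-type control on $\int|u_\eps|^2$, and $\lambda_\eps=o(1)$, this is $o(1)$. The third term is $k_\eps\int(\F,\jj(u_\eps))\psi$; since $|\jj(u_\eps)|\le |u_\eps|\,|\nabla u_\eps|$, Cauchy–Schwarz bounds it by $k_\eps\big(\int|u_\eps|^2\big)^{1/2}\big(\int|\nabla u_\eps|^2|\psi|^2\big)^{1/2}$, and $k_\eps\int|\nabla u_\eps|^2\le C$ by \eqref{GLEnergyUpperBound} while $\int|u_\eps|^2\le C+C\eps^2\log(1/\eps)$ stays bounded; the prefactor $k_\eps^{1/2}=o(1)$ then closes it. The fourth term $\int(\gG,\nabla(\tfrac{1-|u_\eps|^2}{2}))\psi$ I would integrate by parts in $x$ to move the gradient off $1-|u_\eps|^2$ (the boundary term vanishes because $\psi$ has compact support in $D$, or by the boundary conditions on $\gG$ in the Neumann case), leaving $\int(1-|u_\eps|^2)(\diverg\gG\,\psi+\gG\cdot\nabla\psi)$, which is again controlled by $\|1-|u_\eps|^2\|_{L^{4/3}}\|\psi\|_{W^{1,4}}=o(1)$ as for the first term.

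Collecting these four estimates yields $|\langle\diverg\jj(u_\eps),\psi\rangle|\le C(\eps)\|\psi\|_{W^{1,4}_0}$ with $C(\eps)=o(1)$, which is precisely the assertion that $\diverg\jj(u_\eps)\to0$ in $(W^{1,4}_0(D\times[0,\bar T]))^*$. The main obstacle is bookkeeping the powers of $\eps$: the term $\int|u_\eps|^2$ is only bounded (not small), so one must be careful to always pair it with a genuinely vanishing prefactor ($k_\eps$, $\lambda_\eps$, or $\eps^2\log(1/\eps)$), and the term involving $1-|u_\eps|^2$ needs the $W^{1,4}$ (rather than merely $C^{0,1}$) regularity of the test function exactly so that $L^{4/3}$-control of $1-|u_\eps|^2$, which is where the smallness $O(\eps\sqrt{\log(1/\eps)})$ enters, is enough; choosing the right Hölder exponents is the only delicate point.
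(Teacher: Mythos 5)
Your proposal is correct and follows essentially the same route as the paper: both rewrite the mass conservation law \eqref{ConsOfMass} as an identity for $\diverg\jj(u_{\eps})$ tested against $\phi\in W^{1,4}_{0}(D\times[0,\bar T])$, integrate by parts in $t$ (resp.\ $x$) on the two terms involving $1-|u_{\eps}|^{2}$ to exploit the bound $\|1-|u_{\eps}|^{2}\|_{L^{2}}\leqslant C\eps\sqrt{\log(1/\eps)}$, and close the remaining two terms by Cauchy--Schwarz with the kinetic-energy bound \eqref{kinetic_energy} and the energy bound \eqref{GLEnergyUpperBound}, each time gaining a factor $k_{\eps}^{1/2}$. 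The only cosmetic difference is that you invoke $L^{4/3}$--$L^{4}$ duality where the paper uses the Sobolev embedding $W^{1,4}_{0}\hookrightarrow L^{\infty}$ together with $L^{2}$ pairings; both are adequate.
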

\begin{proof}
We test the conservation law for the mass \eqref{ConsOfMass} with a function $\phi$ from $W^{1,4}_{0}(D\times[0,\bar{T}])$ and integrate over $D\times[0,\bar{T}]$. This gives 
\begin{multline}
-\int\limits_{0}^{\bar{T}}\int\limits_{D}(\nabla\phi,\jj(u_{\eps}))=\int\limits_{0}^{\bar{T}}\int\limits_{D}\phi\cdot \partial_{t}(\tfrac{1-|u_{\eps}|^{2}}{2})+k_{\eps}\int\limits_{0}^{\bar{T}}\int\limits_{D}\phi\cdot(\F,\jj(u_{\eps}))\nonumber\\
+\int\limits_{0}^{\bar{T}}\int\limits_{D}\phi\cdot(\gG, \nabla(\tfrac{1-|u_{\eps}|^{2}}{2}))+\lambda_{\eps}\int\limits_{0}^{\bar{T}}\int\limits_{D}\phi\cdot(u_{\eps}\times\partial_{t}u_{\eps}).
\end{multline}
We estimate the terms on the right-hand side using Propositions \ref{prop:kinetic_energy} and \ref{prop:GLEnergyUpperBound} together with the Sobolev embedding $W^{1,4}_{0}(D\times[0,\bar{T}])\hookrightarrow L^{\infty}(D\times[0,\bar{T}])$. We have
\begin{align*}
\Big|\int\limits_{0}^{\bar{T}}\int\limits_{D}\phi\partial_{t}(\tfrac{1-|u_{\eps}|^{2}}{2}) \Big|&=\Big|\int\limits_{0}^{\bar{T}}\int\limits_{D}\partial_{t}\phi\cdot (\tfrac{1-|u_{\eps}|^{2}}{2}) \Big|\leqslant C\lVert\partial_{t}\phi\rVert_{L^{2}}\eps\sqrt{\log(1/\eps)};
\end{align*}
\begin{multline*}
\Big|k_{\eps}\int\limits_{0}^{\bar{T}}\int\limits_{D}\phi(\F,\jj(u_{\eps}))\Big|\leqslant Ck_{\eps}\lVert\phi\rVert_{L^{\infty}}
\Big(\int\limits_{0}^{\bar{T}}\int\limits_{D}|u_{\eps}|^{2}\Big)^{\tfrac{1}{2}}\cdot\Big(\int\limits_{0}^{\bar{T}}\int\limits_{D}|
\nabla u_{\eps}|^{2}\Big)^{\tfrac{1}{2}}\\
\leqslant C\lVert\phi\rVert_{W^{1,4}_{0}}k_{\eps}^{1/2};
\end{multline*}
\begin{align*}
\Big|\int\limits_{0}^{\bar{T}}\int\limits_{D}\phi(\gG, \nabla(\tfrac{1-|u_{\eps}|^{2}}{2}))\Big|=\Big|\int\limits_{0}^{\bar{T}}\int\limits_{D}\nabla(\phi \gG)\cdot (\tfrac{1-|u_{\eps}|^{2}}{2}))\Big|
\leqslant C\lVert\nabla\phi\rVert_{W_{0}^{1,4}}\eps\sqrt{\log(1/\eps)};
\end{align*}
\begin{multline*}
\Big|\int\limits_{0}^{\bar{T}}\int\limits_{D}\phi(u_{\eps}\times\partial_{t}u_{\eps})\Big|\leqslant \lambda_{\eps}C\lVert\phi\rVert_{L^{\infty}}\Big(\int\limits_{0}^{\bar{T}}\int\limits_{D}|u_{\eps}|^{2}\Big)^{\tfrac{1}{2}}\cdot\Big(\int\limits_{0}^{\bar{T}}\int\limits_{D}|
\partial_{t} u_{\eps}|^{2}\Big)^{\tfrac{1}{2}}\\
\leqslant C\lVert\phi\rVert_{W^{1,4}_{0}}k_{\eps}^{1/2}.
\end{multline*}
All the quantities on the right-hand side are of the order $o(1)$, and so the claim follows.\quad\end{proof}

\section{Estimates involving the energy excess}\label{sec:EnExc}
We measure the difference between $\vortexvector (t)$ and $\xxi(t)$ on the level of the energy in terms of the energy excess. We now introduce two more quantities that we use for the same purpose.

The first of them is the \textit{vortex position error} $\eeta(t)$. It is a vector in $\R^{2N}$ with the two-dimensional components $\eta_{k}(t)=\xi_{k}(t)-a_{k}(t)$. The second quantity is the \textit{ODE error} $\rR(t)\in \R^{2N}$. We define it via the components as 
\begin{equation}\label{def:R}
R_{k}(t):=\lambda_{0}\dot{\xi}_{k}(t)+d_{k}i\dot{\xi}_{k}(t)+\tfrac{1}{\pi}\partial_{a_{k}}W(\vortexvector (t))-\F(a_{k}(t))-d_{k}i\gG(a_{k}(t)).
\end{equation}
In other words,
$R_{k}(t)=(\lambda_{0}+d_{k}i)\dot{\eta}_{k}(t).
$
Consequently, 
\begin{equation}\label{eq:RelationBetweenRandXi}
|\rR(t)|=\sqrt{\lambda_{0}^{2}+1}|\dot{\eeta}(t)|.
\end{equation}
In this section, we establish a relation between $D_{\eps}(t)$, $\eeta(t)$ and $\rR(t)$. It is given by the inequalities \eqref{GronwallOneDir} and \eqref{DiffFluidTensorAndCHM}.

We first need the following auxiliary estimate.
\begin{proposition}
For every $0\leqslant t_{1}<t_{2}\leqslant \bar{T}$, the energy excess satisfies the inequality
\begin{align}\label{EnExcAndPosDiff}
D_{\eps}(t_{2})-D_{\eps}(t_{1})\leqslant \, \pi\sum\limits_{k=1}^{N}\int\limits_{t_{1}}^{t_{2}}-2\lambda_{0} (\dot{a}_{k}, \dot{\eta}_{k}) +(\Q_{k},\dot{\eta}_{k})+(S_{k}\cdot \eta_{k}, \dot{a}_{k})\, dt + o(1).
\end{align}
Here, the functions 
$\Q_k(t):=\F(\xi_{k}(t),t)+d_{k}i\gG(\xi_{k}(t), t)
$ 
are vector-valued and the functions 
$S_{k}(t):=\smallint_{0}^{1}\nabla \F(a_{k}(t)+s\cdot\eta_{k}(t),t)+d_k\nabla (i\gG)(a_{k}(t)+s\cdot\eta_{k}(t),t)\, ds
$
are matrix-valued. 
\end{proposition}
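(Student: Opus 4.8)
The plan is to split $D_{\eps}(t_{2})-D_{\eps}(t_{1})$ into a contribution that depends only on $u_{\eps}$ and one that depends only on the vortex trajectories $\vortexvector(t)$, to estimate the two separately, and then to add them so that the unwanted quadratic terms cancel. Since $D_{\eps}(t)=E_{\eps}(u_{\eps}(t))-\pi N\log(1/\eps)-N\gamma-W(\vortexvector(t))$, we have
\[
D_{\eps}(t_{2})-D_{\eps}(t_{1})=\big(E_{\eps}(u_{\eps}(t_{2}))-E_{\eps}(u_{\eps}(t_{1}))\big)-\big(W(\vortexvector(t_{2}))-W(\vortexvector(t_{1}))\big),
\]
and I will handle the two differences in turn, always working along the subsequence $\eps_{n}\to 0$ that produces $\xxi(t)$.

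For the energy difference I would integrate the conservation law \eqref{ConsOfEn} over $D\times[t_{1},t_{2}]$; the boundary term $\int_{\partial D}(\partial_{t}u_{\eps},\partial_{\nnu}u_{\eps})$ vanishes because $\partial_{t}u_{\eps}|_{\partial D}=0$ in the Dirichlet case and $\partial_{\nnu}u_{\eps}|_{\partial D}=0$ in the Neumann case. This yields
\begin{multline*}
E_{\eps}(u_{\eps}(t_{2}))-E_{\eps}(u_{\eps}(t_{1}))=-\lambda_{0}k_{\eps}\int_{t_{1}}^{t_{2}}\int_{D}|\partial_{t}u_{\eps}|^{2}\\
-k_{\eps}\int_{t_{1}}^{t_{2}}\int_{D}(\F,\pp(u_{\eps}))+\int_{t_{1}}^{t_{2}}\int_{D}(\gG,\V(u_{\eps})).
\end{multline*}
The dissipative term is controlled by the mobility bound \eqref{eq:mobility bound}, which after multiplication by $\lambda_{0}$ states that $\liminf_{\eps}\lambda_{0}k_{\eps}\int_{t_{1}}^{t_{2}}\int_{D}|\partial_{t}u_{\eps}|^{2}\geqslant\lambda_{0}\pi\sum_{k}\int_{t_{1}}^{t_{2}}|\dot\xi_{k}|^{2}\,dt$; since this term enters with a minus sign, this one-sided estimate is exactly what is needed to bound it above by $-\lambda_{0}\pi\sum_{k}\int_{t_{1}}^{t_{2}}|\dot\xi_{k}|^{2}\,dt+o(1)$. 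For the $\F$-integral I would invoke the concentration of the momentum, Proposition~\ref{prop:convergence of the momentum}, with the smooth vector field $\w=\F$, and for the $\gG$-integral the convergence of the velocity of the Jacobian \eqref{eq:ConvOfVelocity} with $\w=\gG$; adding the two limits and using $(\dot\xi_{k},\F(\xi_{k},t))+d_{k}(\dot\xi_{k},i\gG(\xi_{k},t))=(\dot\xi_{k},\Q_{k})$ collects them into $\pi\sum_{k}\int_{t_{1}}^{t_{2}}(\dot\xi_{k},\Q_{k})\,dt$. Hence $E_{\eps}(u_{\eps}(t_{2}))-E_{\eps}(u_{\eps}(t_{1}))\leqslant\pi\sum_{k}\int_{t_{1}}^{t_{2}}\big(-\lambda_{0}|\dot\xi_{k}|^{2}+(\dot\xi_{k},\Q_{k})\big)\,dt+o(1)$.

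For the renormalized-energy difference I would differentiate $W$ along the smooth solution of \eqref{GLDetVortexMotion}. Writing $\widetilde{\Q}_{k}(t):=\F(a_{k}(t),t)+d_{k}i\gG(a_{k}(t),t)$ for the analogue of $\Q_{k}$ with $\xi_{k}$ replaced by $a_{k}$, the ODE gives $\partial_{a_{k}}W(\vortexvector(t))=-\pi\big((\lambda_{0}+i)\dot a_{k}-\widetilde{\Q}_{k}\big)$, and since $(i\dot a_{k},\dot a_{k})=\dot a_{k}\times\dot a_{k}=0$ we obtain
\[
W(\vortexvector(t_{2}))-W(\vortexvector(t_{1}))=\int_{t_{1}}^{t_{2}}\sum_{k}(\partial_{a_{k}}W,\dot a_{k})\,dt=-\pi\sum_{k}\int_{t_{1}}^{t_{2}}\big(\lambda_{0}|\dot a_{k}|^{2}-(\widetilde{\Q}_{k},\dot a_{k})\big)\,dt.
\]
Subtracting this from the energy estimate and substituting $\dot\xi_{k}=\dot a_{k}+\dot\eta_{k}$, the expansion $|\dot\xi_{k}|^{2}=|\dot a_{k}|^{2}+2(\dot a_{k},\dot\eta_{k})+|\dot\eta_{k}|^{2}$ makes the term $-\lambda_{0}\pi\sum_{k}\int|\dot a_{k}|^{2}$ cancel against the $+\lambda_{0}\pi\sum_{k}\int|\dot a_{k}|^{2}$ coming from $-\big(W(t_{2})-W(t_{1})\big)$; what remains of the quadratic part is $-2\lambda_{0}(\dot a_{k},\dot\eta_{k})-\lambda_{0}|\dot\eta_{k}|^{2}$, and I would discard the nonpositive $-\lambda_{0}\pi\sum_{k}\int_{t_{1}}^{t_{2}}|\dot\eta_{k}|^{2}\,dt$. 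The forcing terms reorganize as $(\dot\xi_{k},\Q_{k})-(\widetilde{\Q}_{k},\dot a_{k})=(\Q_{k},\dot\eta_{k})+(\dot a_{k},\Q_{k}-\widetilde{\Q}_{k})$, and the fundamental theorem of calculus along the segment from $a_{k}$ to $\xi_{k}=a_{k}+\eta_{k}$ identifies $\Q_{k}-\widetilde{\Q}_{k}=S_{k}\cdot\eta_{k}$, which is precisely the definition of the matrix $S_{k}$. Collecting all the pieces yields \eqref{EnExcAndPosDiff}.

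The main obstacle is the simultaneous passage to the limit $\eps\to0$ in the three $u_{\eps}$-dependent integrals: the $\F$- and $\gG$-integrals genuinely converge and therefore contribute only an $o(1)$, whereas the dissipation admits only the one-sided control provided by the mobility bound \eqref{eq:mobility bound}. It is essential that the dissipation appears with the favorable sign, so that a $\liminf$ lower bound on $k_{\eps}\int|\partial_{t}u_{\eps}|^{2}$ is exactly what is required. The second, purely algebraic, subtlety is that the clean form of \eqref{EnExcAndPosDiff} hinges on the cancellation of the $\lambda_{0}|\dot a_{k}|^{2}$ terms---which relies both on the matching of the scaling $\lambda_{\eps}=\lambda_{0}k_{\eps}$ with the coefficient $\lambda_{0}$ in \eqref{GLDetVortexMotion} and on the identity $(i\dot a_{k},\dot a_{k})=0$---together with recognizing the first-order remainder $\Q_{k}-\widetilde{\Q}_{k}$ as exactly $S_{k}\cdot\eta_{k}$.
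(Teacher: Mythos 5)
Your proposal is correct and follows essentially the same route as the paper: integrate the energy conservation law, control the dissipation one-sidedly via the mobility bound \eqref{eq:mobility bound}, pass to the limit in the $\F$- and $\gG$-integrals via Propositions \ref{prop:convergence of the momentum} and \eqref{eq:ConvOfVelocity}, substitute the ODE \eqref{GLDetVortexMotion} into the variation of $W$, and reorganize with $\dot{\xi}_{k}=\dot{a}_{k}+\dot{\eta}_{k}$ and the mean value theorem. The only difference is organizational (the paper pairs each $u_{\eps}$-term with its ODE counterpart as $B_{1},B_{2},B_{3}$ before estimating), which is immaterial.
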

\begin{proof}
By the definition of the energy excess, we have
\begin{align*}
D_{\eps}(t_{2})-D_{\eps}(t_{1})=E_{\eps}(u_{\eps}(t_{2}))-E_{\eps}(u_{\eps}(t_{1}))+W(\vortexvector (t_{1}))-W(\vortexvector (t_{2})).
\end{align*}
We rewrite the right-hand side with the help of \eqref{GLDetVortexMotion} and \eqref{eq:EnergyEvolution}. We obtain that 
\[D_{\eps}(t_{2})-D_{\eps}(t_{1})=B_{1}+B_{2}+B_{3}.
\]
The terms $B_{j}$ are given by
\[B_{1}=-\lambda_{\eps}\int\limits_{t_{1}}^{t_{2}}\int\limits_{D}|\partial_{t}u_{\eps}|^{2}+\pi \lambda_{0}\sum_{k=1}^{N}\int\limits_{t_{1}}^{t_{2}}|\dot{a}_{k}|^{2}\, dt,
\]
\[B_{2}=-k_{\eps}\int\limits_{t_{1}}^{t_{2}}\int\limits_{D}(\F,\pp(u_{\eps}))-\pi\sum_{k=1}^{N}\int\limits_{t_{1}}^{t_{2}}(\F(a_{k}(t),t), \dot{a}_{k})\,dt,
\]
and
\[B_{3}=\int\limits_{t_{1}}^{t_{2}}\int\limits_{D}(\gG,\V(u_{\eps}))-\pi\sum_{k=1}^{N}d_{k}\int \limits_{t_{1}}^{t_{2}}(i\gG(a_{k}(t),t),\dot{a}_{k})\,dt.
\]
We consider them one by one. 

Proposition \ref{prop: mobility bound} implies that
\begin{multline*}
B_{1}\leqslant \pi \lambda_{0}\sum_{k=1}^{N}\int\limits_{t_{1}}^{t_{2}}(|\dot{a}_{k}|^{2}-|\dot{\xi}_{k}|^{2})\, dt+o(1)
=\pi \lambda_{0}\sum_{k=1}^{N}\int\limits_{t_{1}}^{t_{2}}(-\dot{\eta}_{k}, \dot{\eta}_{k}+2\dot{a}_{k})\, dt +o(1)\\
\leqslant -2\pi\lambda_{0} \sum\limits_{k=1}^{N}\int\limits_{t_{1}}^{t_{2}}(\dot{a}_{k}, \dot{\eta}_{k})\, dt+o(1).
\end{multline*}
 
 The estimates for $B_{2}$ and $B_{3}$ follow from similar arguments. 
 By Proposition \ref{prop:convergence of the momentum}, we obtain 
 \begin{multline*}
 B_{2}=\pi\sum_{k=1}^{N}\int\limits_{t_{1}}^{t_{2}}(\F(\xi_{k}(t),t), \dot{\xi}_{k})-(\F(a_{k}(t),t), \dot{a}_{k})\,dt+o(1)\\
= \pi\sum_{k=1}^{N}\int\limits_{t_{1}}^{t_{2}}(\F(\xi_{k}(t),t),\dot{\eta}_{k})+(\F(\xi_{k}(t),t)-\F(a_{k}(t),t), \dot{a}_{k})\,dt+o(1).
 \end{multline*}
 We represent the vector $\F(\xi_{k})-\F(a_{k})$ with the help of the Mean Value Theorem~by
 \[\F(\xi_{k}(t),t)-\F(a_{k}(t),t)=(\textstyle \int_{0}^{1}\nabla \F(a_{k}(t)+s\cdot\eta_{k}(t),t)\,ds)\cdot\eta_{k}(t).
 \]
 
 For $B_{3}$, we use \eqref{eq:ConvOfVelocity} and the Mean Value Theorem~to get
 \begin{multline*}
 B_{3}=\pi\sum_{k=1}^{N}d_{k}\int\limits_{t_{1}}^{t_{2}}(i\gG(\xi_{k}(t),t),\dot{\eta}_{k})\,dt\\
+\pi\sum_{k=1}^{N}d_{k}\int\limits_{t_{1}}^{t_{2}}(\textstyle\int_{0}^{1}\nabla (i\gG)(a_{k}(t)+s\cdot\eta_{k}(t),t)\,ds)\cdot\eta_{k}(t), \dot{a}_{k})\, dt+o(1).
 \end{multline*}
 
We sum the estimates for $B_{j}$ and arrive at \eqref{EnExcAndPosDiff}.
\qquad\end{proof}

\begin{proposition}
For every $T<\bar{T}$, there exists a constant $C=C(T, \vortexvector^{0})$ such that, for $0\leqslant t_{1}< t_{2}\leqslant T$, there holds
\begin{equation}\label{GronwallOneDir}
D_{\eps}(t_{2})+|\eeta(t_{2})|\leqslant D_{\eps}(t_{1})+|\eeta(t_{1})|+C\int\limits_{t_{1}}^{t_{2}}|\eeta|\, dt +C\int\limits_{t_{1}}^{t_{2}}|\rR|\, dt+o(1).
\end{equation}
\end{proposition}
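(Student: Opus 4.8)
The plan is to derive \eqref{GronwallOneDir} from the previous proposition, \eqref{EnExcAndPosDiff}, by estimating the three integrand terms and adding a clean bound on $|\eeta(t_2)| - |\eeta(t_1)|$. The first step is to handle the position error: since $\eta_k(t_2) - \eta_k(t_1) = \int_{t_1}^{t_2}\dot\eta_k\,dt$, we get $|\eeta(t_2)| \leqslant |\eeta(t_1)| + \int_{t_1}^{t_2}|\dot\eeta|\,dt$, and by \eqref{eq:RelationBetweenRandXi} this is $|\eeta(t_1)| + (\lambda_0^2+1)^{-1/2}\int_{t_1}^{t_2}|\rR|\,dt$. So the $|\eeta|$ contribution on the left of \eqref{GronwallOneDir} is already absorbed into a $C\int|\rR|$ term; it remains to control $D_\eps(t_2) - D_\eps(t_1)$ via \eqref{EnExcAndPosDiff}.

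The key observation for the three integrands in \eqref{EnExcAndPosDiff} is that each must be re-expressed in terms of $\dot\eeta$ and $\eeta$ (equivalently $\rR$ and $\eeta$) rather than $\dot a_k$, because $\dot a_k$ is only bounded in $L^2$ and not pointwise small, whereas we want a Gronwall-type inequality linear in $|\eeta|$ and $|\rR|$. From the ODE \eqref{GLDetVortexMotion}, $\dot a_k = (\lambda_0+i)^{-1}(-\tfrac1\pi\partial_{a_k}W + \Q_k - (\Q_k - \F(a_k) - d_k i\gG(a_k)))$; more directly, I would write $\dot a_k = \dot\xi_k - \dot\eta_k$ and substitute. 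The term $-2\lambda_0(\dot a_k,\dot\eta_k)$ becomes $-2\lambda_0(\dot\xi_k,\dot\eta_k) + 2\lambda_0|\dot\eta_k|^2$; the $|\dot\eta_k|^2$ piece is bounded by $C|\rR|^2 \leqslant C|\rR|$ once we know $|\rR|$ is bounded (which follows from Propositions \ref{prop:Compactness of the Jacobian} and \ref{prop:GLEnergyUpperBound}, giving $\dot\xi_k \in L^2$ and $\partial_{a_k}W$, $\F$, $\gG$ bounded on $[0,T]$ since $T < T^*$), and the cross term $-2\lambda_0(\dot\xi_k,\dot\eta_k)$ is handled by Cauchy-Schwarz plus $|\dot\xi_k| \leqslant C + C|\rR|$. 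Similarly $(\Q_k,\dot\eta_k)$ is directly $\leqslant C|\dot\eta_k| \leqslant C|\rR|$ since $\Q_k$ is bounded, and $(S_k\cdot\eta_k,\dot a_k)$ splits as $(S_k\cdot\eta_k,\dot\xi_k) - (S_k\cdot\eta_k,\dot\eta_k)$ with $S_k$ bounded (smoothness of $\F,\gG$ and boundedness of $a_k,\xi_k$ away from collisions), so the first piece is $\leqslant C|\eta_k|(1+|\rR|) \leqslant C|\eta_k| + C|\rR|$ after Young's inequality and the second $\leqslant C|\eta_k| + C|\rR|$.

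The main obstacle — or rather the point requiring the most care — is the interplay between the $L^2$-only control on $\dot\xi_k$ (and $\dot a_k$) and the desire for a pointwise-in-$t$ integrand bounded by $C(|\eeta| + |\rR|)$: one must consistently absorb every factor of $\dot\xi_k$ using $|\dot\xi_k|^2 \leqslant C(1 + |\dot\eta_k|^2) = C(1 + c|R_k|^2)$ and then use that $|\rR|$ is bounded on $[0,T]$ to downgrade $|\rR|^2$ to $C|\rR|$, while being careful that the constant $C$ depends only on $T$ and $\vortexvector^0$ through the uniform separation $\rho_0$ and the energy bound. Collecting all terms, integrating, and adding the position-error estimate from the first step yields \eqref{GronwallOneDir} with the stated structure (constant $C = C(T,\vortexvector^0)$, the $o(1)$ inherited from \eqref{EnExcAndPosDiff}).
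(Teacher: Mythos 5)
Your overall strategy --- estimate the right-hand side of \eqref{EnExcAndPosDiff} term by term and then add the bound $|\eeta(t_2)|-|\eeta(t_1)|\leqslant \int_{t_1}^{t_2}|\dot{\eeta}|\,dt = (\lambda_0^2+1)^{-1/2}\int_{t_1}^{t_2}|\rR|\,dt$ --- is the right one and matches the paper. But the execution contains a genuine gap, rooted in a misreading of which velocity is only $L^2$-controlled. You assert that $\dot{a}_k$ ``is only bounded in $L^2$ and not pointwise'' and therefore substitute $\dot{a}_k=\dot{\xi}_k-\dot{\eta}_k$ throughout. In fact it is the other way around: $\vortexvector(t)$ solves the smooth ODE \eqref{GLDetVortexMotion}, and for $T<\bar{T}<T^{*}$ the vortices stay separated, so the right-hand side of that ODE, and hence $\dot{a}_k$, is bounded \emph{pointwise} on $[0,T]$ by a constant $C(T,\vortexvector^{0})$. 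It is $\dot{\xi}_k$ that is only known to lie in $L^2$ (Propositions \ref{prop:Compactness of the Jacobian} and \ref{prop: mobility bound}; the paths $\xi_k$ are merely $H^1$). Your substitution replaces the well-controlled quantity by the poorly controlled one.

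The concrete failure is in the term $-2\lambda_0(\dot{a}_k,\dot{\eta}_k)$. After your substitution it produces $+2\lambda_0|\dot{\eta}_k|^2\sim|R_k|^2$, which you downgrade to $C|R_k|$ by claiming that $|\rR|$ is pointwise bounded ``since $\dot{\xi}_k\in L^2$''. An $L^2$ bound on $\dot{\xi}_k$ gives only $\int|\rR|^2\,dt\leqslant C$, not $\sup_t|\rR(t)|\leqslant C$, and no preceding result supplies a pointwise bound on $\dot{\xi}_k$. An uncontrolled additive term $C\int|\rR|^2\,dt$ that is merely bounded (rather than dominated by $C\int(|\eeta|+|\rR|)\,dt+o(1)$) would ruin the Gronwall argument in Proposition \ref{prop:short-time}, which needs exactly the linear structure of \eqref{GronwallOneDir} to conclude $\eeta\equiv 0$. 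The same implicit circularity appears when you write $|\dot{\xi}_k|\leqslant C+C|\rR|$, which already presupposes the pointwise bound on $\dot{a}_k$ you denied. The repair is the paper's proof: do not substitute at all. Using $\sup_{[0,T]}|\dot{\vortexvector}|\leqslant C$ together with the boundedness of $\Q_k$ and $S_k$, the three integrands in \eqref{EnExcAndPosDiff} are bounded directly by $C|\dot{\eta}_k|$, $C|\dot{\eta}_k|$ and $C|\eta_k|$ respectively, and \eqref{eq:RelationBetweenRandXi} converts $\int|\dot{\eeta}|\,dt$ into $\int|\rR|\,dt$ with no quadratic terms ever appearing.
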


\begin{proof}
We estimate the right-hand side of \eqref {EnExcAndPosDiff} in terms of $\rR$. The time $T$ is strictly smaller than the collision time for the vortex motion law \eqref{GLDetVortexMotion}. Therefore, the right-hand side of \eqref{GLDetVortexMotion} is bounded on $[0, T]$. The functions $\Q_{k}$ and $S_{k}$ are bounded on this interval as well, because the fields $\F$ and $\gG$ are smooth. Hence, there exists a constant $C=(T, \vortexvector^{0})$ such that 
\[\sup\limits_{t\in[0, T]}|\dot{\vortexvector}(t)|\leqslant C,\, \max\limits_{k}\sup\limits_{t\in[0, T]}|\Q_{k}(t)| \leqslant C,  \text{ and
 }\max\limits_{k}\sup\limits_{t\in[0, T]}|S_{k}(t)| \leqslant C.\]

With this $C$ and $0\leqslant t_{1}< t_{2}\leqslant T$, the inequality \eqref{EnExcAndPosDiff} simplifies to
\[D_{\eps}(t_{2})\leqslant D_{\eps}(t_{1})+C\int\limits_{t_{1}}^{t_{2}}|\eeta(t)|\, dt +C\int\limits_{t_{1}}^{t_{2}}|\dot{\eeta}(t)|\, dt+o(1).
\]
We add to both sides of the inequality the term $|\eeta(t_{2})|-|\eeta(t_{1})|$. On the right-hand side, we estimate this term from above. We have 
$\big| |\eeta(t_{2})|-|\eeta(t_{1})|\big|\leqslant \int_{t_{1}}^{t_{2}}\big| \tfrac{d}{dt}|\eeta(t)|\big|\,dt.$
On the other hand, we have that
$\big|\tfrac{d}{dt}|\eeta(t)|\big|=|\dot{\eeta}(t)|.
$
This way, we see that
\[D_{\eps}(t_{2})+|\eeta(t_{2})|-|\eeta(t_{1})|\leqslant D_{\eps}(t_{1})+C\int\limits_{t_{1}}^{t_{2}}|\eeta(t)|\, dt +C\int\limits_{t_{1}}^{t_{2}}|\dot{\eeta}(t)|\, dt+o(1).
\]
This inequality together with \eqref{eq:RelationBetweenRandXi} yields \eqref{GronwallOneDir}. 
\qquad\end{proof}
 
The proposition below is the last major ingredient of the proof of Theorem~\ref{ThmGLDet}. 
The estimate has been proven in \cite{KMMS} with the tools developed in \cite{JerrardSpirnRefined}. The argument relies on the energy estimates, the concentration of the Jacobians, and the fact that $\diverg \jj(u_{\eps})$ converges to zero. Grace to Propositions \ref{prop:GLEnergyUpperBound}, \ref{prop:Compactness of the Jacobian}, and~\ref{DivCurrent}, the proof works in our case as well. Therefore, we do not repeat it.
\begin{proposition}[\cite{KMMS}, Proposition 5.4 ]\label{prop:DiffFluidTensorAndCHM}
There exists a constant $K>1$ depending only on $D$ and $N$ with the following property. Suppose that at a time $t_{0}\geqslant 0$, $D_{\eps}(t_{0})=o(1)$ and $\eeta(t_{0})=0$. Let $\tau>0$ be so small that
$k_{\eps}\smallint_{t_0}^{t_{0}+\tau}\smallint_{D}|\partial_{t}u_{\eps}|^{2}\leqslant C$. Set $\sigma:=\tfrac{1}{2}\rho_{\vortexvector(t_{0})}$. Then for every $\delta\leqslant \tau $ such that $D_{\eps}(t)\leqslant C$ and $|\eeta(t)|\leqslant \sigma/(4K)$ on the interval $[t_{0}, t_{0}+\delta]$, and for every $\varphi\in W_{0}^{2,\infty}(D)$ that is affine in each $B_{\sigma}(a_{k}(t_{0}))$, there holds 
\begin{multline}\label{DiffFluidTensorAndCHM}
\int\limits_{t_{0}}^{t_{0}+\delta}\Big|\int\limits_{D}( \grad \curl\varphi):\big ((\nabla u_{\eps}\otimes \nabla u_{\eps})-(\jj(u_{*})\otimes\jj(u_{*}))\big) \, dx\Big|\, dt\\
\leqslant C \int\limits_{t_{0}}^{t_{0}+\delta} D_{\eps}(t)+ |\eeta(t)|\,dt +o(1)
\end{multline}
with $u_{*}=u_{*}(\vortexvector(t),\dd)$.
\end{proposition}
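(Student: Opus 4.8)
The plan is to reproduce the argument of \cite{KMMS}, Proposition~5.4, which itself rests on the refined Jacobian estimates of \cite{JerrardSpirnRefined}, and to verify that all its inputs are available here. Fix $t\in[t_0,t_0+\delta]$ and abbreviate $u_*=u_*(\vortexvector(t),\dd)$. The first observation is that since $\varphi$ is affine in each $B_\sigma(a_k(t_0))$, every second derivative of $\varphi$ -- and hence the whole matrix field $\grad\curl\varphi$ -- vanishes there. Therefore both terms under the integral in \eqref{DiffFluidTensorAndCHM} are supported in the vortex-free region $A:=D\setminus\bigcup_k B_\sigma(a_k(t_0))$. Moreover, by the mobility bound \eqref{eq:mobility bound} the paths $\xi_k$ are equicontinuous, so after shrinking $\delta$ we may assume $\xi_k(t),a_k(t)\in B_{\sigma/2}(a_k(t_0))$ for all $t$ in the interval; then for small $\eps$ the set $\{|u_\eps(\cdot,t)|\leqslant 1/2\}$ is contained in small balls about the $\xi_k(t)$, hence disjoint from $A$, and $\grad\curl\varphi$ is bounded on $A$. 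So the whole problem reduces to estimating $\int_A(\grad\curl\varphi):\big((\nabla u_\eps\otimes\nabla u_\eps)-(\jj(u_*)\otimes\jj(u_*))\big)\,dx$, with both fields now smooth (for $u_*$) or vortex-free (for $u_\eps$) on $A$.

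\textbf{Comparison in the vortex-free region.} On $A$ one first passes from $\nabla u_\eps\otimes\nabla u_\eps$ to $\jj(u_\eps)\otimes\jj(u_\eps)$: their difference is a combination of terms controlled pointwise by $\nabla|u_\eps|$ and $1-|u_\eps|^2$, and when $D_\eps(t)$ is small the ``modulus part'' of the excess, $\int_A\big(|\nabla|u_\eps||^2+\eps^{-2}(1-|u_\eps|^2)^2\big)$, is bounded by $C(D_\eps(t)+|\eeta(t)|)+o(1)$, because saturation of the Ginzburg-Landau lower bound forces the modulus of $u_\eps$ to be nearly constant on $A$; so this replacement costs only $C(D_\eps(t)+|\eeta(t)|)+o(1)$. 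It then remains to compare $\jj(u_\eps)$ with $\jj(u_*)$ on $A$. The discrepancy $h:=\jj(u_\eps)-\jj(u_*)$ has $\diverg h=\diverg\jj(u_\eps)\to 0$ (Proposition~\ref{DivCurrent}), while $\curl h=2J(u_\eps)-2\pi\sum_k d_k\delta_{a_k(t)}$ is small in a negative norm, since $J(u_\eps)\to\pi\sum_k d_k\delta_{\xi_k(t)}$ (Proposition~\ref{prop:Compactness of the Jacobian}) and $|\xi_k(t)-a_k(t)|=|\eta_k(t)|$. Feeding this, together with the $H^1$-Lipschitz dependence of the canonical harmonic map on its vortex positions (uniform on $[0,T]$ with $T<T^*$, where the vortices stay mutually separated and interior to $D$) and with the bound $\int_A|\nabla u_\eps|^2\leqslant C$ from Proposition~\ref{prop:GLEnergyUpperBound}, into the divergence-curl argument of \cite{JerrardSpirnRefined} yields
\[
\Big|\int_A(\grad\curl\varphi):\big(\jj(u_\eps)\otimes\jj(u_\eps)-\jj(u_*)\otimes\jj(u_*)\big)\,dx\Big|\leqslant C\big(D_\eps(t)+|\eeta(t)|\big)+o(1).
\]

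\textbf{Integration in time.} Combining the two previous paragraphs gives, for every $t\in[t_0,t_0+\delta]$, a pointwise bound $\big|\int_D(\grad\curl\varphi):((\nabla u_\eps\otimes\nabla u_\eps)-(\jj(u_*)\otimes\jj(u_*)))\,dx\big|\leqslant C(D_\eps(t)+|\eeta(t)|)+\omega_\eps(t)$. Integrating over $[t_0,t_0+\delta]$ produces \eqref{DiffFluidTensorAndCHM} once one knows that $\int_{t_0}^{t_0+\delta}\omega_\eps(t)\,dt=o(1)$; this follows exactly as in \cite{KMMS} from the standing assumptions $D_\eps(t)\leqslant C$ on the interval and $k_\eps\int_{t_0}^{t_0+\delta}\int_D|\partial_t u_\eps|^2\leqslant C$, which through \eqref{eq:mobility bound} and Proposition~\ref{prop:Compactness of the Jacobian} keep the relevant quantities uniformly integrable on the interval. (When this estimate is invoked later, Proposition~\ref{prop:GradOfW} identifies $\int_D(\grad\curl\varphi):(\jj(u_*)\otimes\jj(u_*))\,dx$ with $-\sum_k\curl\varphi(a_k(t_0))\cdot\partial_{a_k}W(\vortexvector(t),\dd)$, but that identity is not needed inside the present proof.)

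\textbf{Main obstacle.} The delicate point -- and the reason this is not a mechanical estimate -- is the \emph{linear} control of the bilinear term in the second paragraph: a plain Cauchy-Schwarz bound on $\int_A(\grad\curl\varphi):(h\otimes\jj(u_*)+\jj(u_*)\otimes h+h\otimes h)$ would only produce the square root of the right-hand side, which is too weak for the Gronwall scheme of Section~\ref{proof_complete}. Obtaining the linear bound is exactly where one must exploit the full structure of $\jj(u_\eps)$ -- that $\diverg\jj(u_\eps)$ is small \emph{and} $\curl\jj(u_\eps)=2J(u_\eps)$ concentrates -- as in \cite{JerrardSpirnRefined}; it is also the feature responsible for the well-preparedness hypothesis \eqref{EnergyAtTimeZero} and for the requirement $D_\eps(t_0)=o(1)$ in the statement. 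A secondary, routine nuisance is tracking the motion of $\xi_k(t)$ and $a_k(t)$ relative to the fixed affine neighbourhoods $B_\sigma(a_k(t_0))$ of $\varphi$, which is handled by taking $\delta$ small.
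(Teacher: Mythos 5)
Your proposal matches the paper's treatment: the paper does not reprove this estimate but simply cites \cite{KMMS}, Proposition 5.4 (built on \cite{JerrardSpirnRefined}) and notes that the proof carries over because its three inputs --- the energy bound of Proposition~\ref{prop:GLEnergyUpperBound}, the concentration of the Jacobians from Proposition~\ref{prop:Compactness of the Jacobian}, and the vanishing of $\diverg \jj(u_{\eps})$ from Proposition~\ref{DivCurrent} --- are available here, and these are exactly the ingredients your sketch invokes. Your reconstruction of the internal div-curl argument and of the reason a linear (rather than square-root) bound is needed is a faithful outline of the cited proof, so the two routes are essentially the same.
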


\section{Proof of Theorem~\ref{ThmGLDet}}\label{proof_complete}

In this section, we finish the proof of Theorem~\ref{ThmGLDet}. We first establish a short-time result in Proposition \ref{prop:short-time}. With its help, we finally obtain the full statement on the time-interval $[0,T^{*})$.

\begin{proposition} \label{prop:short-time}
There exists a time $\tau>0$ such that the conclusions of Theorem~\ref{ThmGLDet} hold on $[0,\tau]$.
\end{proposition}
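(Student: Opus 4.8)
\textbf{Proof strategy for Proposition~\ref{prop:short-time}.}
The plan is to run a Gronwall argument on the combined quantity $D_{\eps}(t)+|\eeta(t)|$ on a short time interval, using \eqref{GronwallOneDir} for one direction and \eqref{DiffFluidTensorAndCHM} to control the ODE error $\rR$ for the other. First I would choose $\tau>0$ small enough that the solution $\vortexvector(t)$ of \eqref{GLDetVortexMotion} stays away from collisions and from $\partial D$ on $[0,\tau]$, that $\tau<\bar T$, and that the kinetic-energy bound $k_{\eps}\smallint_{0}^{\tau}\smallint_{D}|\partial_{t}u_{\eps}|^{2}\leqslant C$ holds (Proposition~\ref{prop:kinetic_energy}). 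Set $\sigma:=\tfrac12\rho_{\vortexvector(0)}$ and let $K$ be the constant from Proposition~\ref{prop:DiffFluidTensorAndCHM}; by shrinking $\tau$ further I can arrange that $|\eeta(t)|\leqslant\sigma/(4K)$ and $D_{\eps}(t)\leqslant C$ on $[0,\tau]$ hold a priori (these will be recovered by the Gronwall estimate, so there is a continuation/bootstrap argument to be spelled out, starting from $D_{\eps}(0)=o(1)$, $\eeta(0)=0$).

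The core of the argument is to bound $|\rR(t)|$, equivalently $|\dot\eeta(t)|$ by \eqref{eq:RelationBetweenRandXi}, in terms of $D_{\eps}+|\eeta|$. To this end I would test the conservation law for the Jacobian \eqref{ConsOfJacobian} against a stream function: for a fixed index $k$ pick $\varphi\in W^{2,\infty}_{0}(D)$ that is affine in $B_{\sigma}(a_{l}(t_0))$ for every $l$ (so $\grad\curl\varphi$ is constant near each vortex, and $\curl\varphi(a_k)=e_j$, $\curl\varphi(a_l)=0$ for $l\ne k$). Integrating \eqref{ConsOfJacobian} against $\varphi$ over $D\times[t_1,t_2]$, the term $\partial_t J(u_\eps)$ produces, after using Proposition~\ref{prop:Compactness of the Jacobian}, the boundary contributions $\pi d_k(\dot\xi_k, i\,\curl\varphi(\xi_k))$; the term $\lambda_\eps\curl\pp(u_\eps)$ gives, via the momentum concentration of Proposition~\ref{prop:convergence of the momentum} (after multiplying by $k_\eps$ and noting $\lambda_\eps=\lambda_0 k_\eps$), the contribution $\pi\lambda_0(\dot\xi_k,\curl\varphi(\xi_k))$; the stress-tensor term $\curl\diverg(\nabla u_\eps\otimes\nabla u_\eps)$, after integrating by parts twice, becomes $\int_D(\grad\curl\varphi):(\nabla u_\eps\otimes\nabla u_\eps)$, which by Proposition~\ref{prop:DiffFluidTensorAndCHM} and then Proposition~\ref{prop:GradOfW} equals $-\curl\varphi(a_k)\cdot\partial_{a_k}W(\vortexvector(t))$ up to an error controlled by $\int D_\eps+|\eeta|$; the $k_\eps\curl(\F\cdot(\nabla u_\eps\otimes\nabla u_\eps))$ term converges by stress-tensor concentration (Proposition~\ref{prop:convergence of the stress tensor}) to $\pi\,\curl(\F\cdot\ldots)$ evaluated suitably, producing $\pi\F(\xi_k)$ after integrating by parts against $\varphi$; and the $\curl((i\gG)J(u_\eps))$ term converges by Jacobian concentration to the $\pi d_k\, i\gG(\xi_k)$ term. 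Collecting all of this, and comparing with the definition \eqref{def:R} of $R_k$ (replacing $\F(\xi_k),\gG(\xi_k)$ by $\F(a_k),\gG(a_k)$ at the cost of an $O(|\eeta|)$ error via the Mean Value Theorem), one gets, for each component $j$,
\begin{equation*}
\Big|\int\limits_{t_1}^{t_2}(R_k(t),e_j)\,dt\Big|\leqslant C\int\limits_{t_1}^{t_2}\big(D_\eps(t)+|\eeta(t)|\big)\,dt+o(1),
\end{equation*}
hence $\int_{t_1}^{t_2}|\rR|\,dt\leqslant C\int_{t_1}^{t_2}(D_\eps+|\eeta|)\,dt+o(1)$.

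Feeding this bound back into \eqref{GronwallOneDir} gives
\begin{equation*}
D_\eps(t_2)+|\eeta(t_2)|\leqslant D_\eps(t_1)+|\eeta(t_1)|+C\int\limits_{t_1}^{t_2}\big(D_\eps(t)+|\eeta(t)|\big)\,dt+o(1).
\end{equation*}
Starting from $t_1=0$ with $D_\eps(0)+|\eeta(0)|=o(1)$ and applying the integral form of Gronwall's lemma, I conclude $\sup_{[0,\tau]}\big(D_\eps(t)+|\eeta(t)|\big)=o(1)$. This simultaneously proves well-preparedness on $[0,\tau]$ (part (1)), forces $\eeta\equiv o(1)$ so that $\xxi(t)=\vortexvector(t)+o(1)$ and $J(u_\eps(t))\to\pi\sum d_k\delta_{a_k(t)}$ (part (2) and part (3), since then $\dot\eeta\to 0$ in the appropriate sense and the limiting paths solve \eqref{GLDetVortexMotion}); it also closes the bootstrap, since $|\eeta(t)|\leqslant\sigma/(4K)$ and $D_\eps(t)\leqslant C$ are recovered for $\eps$ small. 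I expect the main obstacle to be the careful testing of \eqref{ConsOfJacobian}: one must choose the stream function $\varphi$ with the right affine behaviour near every vortex, justify all the integrations by parts (including the vanishing of boundary terms under both Dirichlet and Neumann conditions, using \eqref{CondOnG}), and match the limits of the five right-hand-side terms against the exact algebraic structure of \eqref{def:R} — in particular keeping track of the factor $k_\eps$ so that the rescaled quantities in Propositions \ref{prop:convergence of the stress tensor}--\ref{prop:convergence of the momentum} apply, and absorbing the discrepancy between evaluation at $\xi_k$ and at $a_k$ into the $|\eeta|$ term.
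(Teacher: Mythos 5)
Your proposal is correct and follows essentially the same route as the paper: the paper also tests the Jacobian conservation law \eqref{ConsOfJacobian} with the stream function $\pphi(x)=(ix)\chi_{\sigma}(x-a_{k}^{0})$ (affine near $a_{k}^{0}$, vanishing near the other vortices), identifies the limits of the resulting terms via Propositions \ref{prop:Compactness of the Jacobian}, \ref{prop:convergence of the stress tensor}, \ref{prop:convergence of the momentum}, \ref{prop:GradOfW} and \ref{prop:DiffFluidTensorAndCHM} to get $\int_{0}^{\tau}|\rR|\,dt\leqslant C\int_{0}^{\tau}(D_{\eps}+|\eeta|)\,dt+o(1)$, and closes with \eqref{GronwallOneDir} and Gronwall. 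The only cosmetic difference is that the paper sidesteps your bootstrap by choosing $\tau$ directly from the a priori continuity of the paths $\xi_{k}$ and the boundedness of $\dot{\vortexvector}$ on $[0,\bar{T}]$.
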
 
\begin{proof}
%
The time $\bar{T}$ is strictly smaller than the time of the first collision for the solution to \eqref{GLDetVortexMotion}. Therefore, the number $A_{*}:=\sup\{\,|\dot{\vortexvector}(t)|, t\in [0, \bar{T}]\}$ is well-defined. We set $\rho_{*}:=\inf\{\rho_{\vortexvector(t)}, t\in[0, \bar{T}]\}$ and $\sigma:=\rho_{*}/2$. The radius $\sigma$ is strictly positive. For the constant $K>1$ from Proposition \ref{prop:DiffFluidTensorAndCHM}, we define 
$\tau_{0}:=\min\{\bar{T},\sigma/(4KA_{*})\}
$.
Then, $a_{k}(t)\in B_{\sigma/(4K)}(a_{k}^{0})$, for every $k\in\{1,...,N\}$ and all $t\in[0,\tau_{0}]$. Since  the paths $\xi_{k}(t)$ are continuous, there exists a time $\tau_{1}>0$ such that  $\xi_{k}(t)\in B_{\sigma}(a_{k}^{0})$, for every $k\in\{1,...,N\}$ and all $t\in[0,\tau_{1}]$. We finally set
\[\tau:=\min\{\tau_{0}, \tau_{1}\}.
\]
We show that Theorem~\ref{ThmGLDet} holds true on the time interval $[0,\tau]$. 

We fix a cut-off function $\chi_{\sigma}(x)\in C^{\infty}(\R^{2})$ such that $\chi(x)=1$ in $B_{\sigma}(0)$ and $\chi(x)=0$ outside of $B_{2\sigma}(0)$. We fix a number $k\in \{1,...,N\}$ and define a vector-valued test function $\pphi=(\phi^{1},\phi^{2})$ with
$\pphi(x)=(i x)\cdot \chi_{\sigma}(x-a_{k}^{0})
$.
In the ball $B_{\sigma}(a_{k}^{0})$, there holds
\[
\phi^{1}(x)=-x_{2}, \; \phi^{2}(x)=x_{1}, \; \curl \phi^{1}(x)=(1,0), \; \curl \phi^{2}(x)=(0,1).
\]
Outside of the ball $B_{2\sigma}(a_{k}^{0})$, all these functions are equal to zero. 

We apply Propositions \ref{prop:Compactness of the Jacobian} and \ref{prop:convergence of the momentum} to the test functions $\phi^{j}$, $j=1,2$. We obtain
\[\int\limits_{0}^{\tau}\partial_{t}\int\limits_{D}J(u_{\eps}(t))\phi^{j}\,dx dt-\lambda_{\eps}\int\limits_{0}^{\tau}\int\limits_{D}(\curl \phi^{j},\pp (u_{\eps}))\,dx dt
\rightarrow \pi\int\limits_{0}^{\tau}\partial_{t}(d_{k}(i\xi_{k})^{j}+\lambda_{0}\xi_{k}^{j})\, dt.
\]
For the equation error 
\[R_{k}^{j}=\lambda_{0}\dot{\xi}_{k}^{j}+d_{k}i\dot{\xi}_{k}^{j}+\frac{1}{\pi}\partial_{a_{k}}W(\vortexvector(t))^{j}-F^{j}(a_{k}(t), t)-d_{k}(i\gG)^{j}(a_{k}(t), t),\]
there holds 
\begin{multline}\label{Estimate on equation error general form}
\pi\int\limits_{0}^{\tau}| R_{k}^{j}|\,dt
\leqslant \lim_{\eps\to0}\int\limits_{0}^{\tau}\Big|\partial_{t}\int\limits_{D}J(u_{\eps}(t))\phi^{j}\,dx -\lambda_{\eps}\int\limits_{D}(\curl \phi^{j},\pp (u_{\eps}))\,dx\\
+\partial_{a_{k}}W(\vortexvector(t))^{j}-\pi F^{j}(a_{k}(t), t)-\pi d_{k}(i\gG)^{j}(a_{k}(t), t)\Big|\,dt.
\end{multline}
We substitute the first two terms on the right-hand side of \eqref{Estimate on equation error general form} from \eqref{ConsOfJacobian}.
This yields the inequality
\[\pi\int\limits_{0}^{\tau}| R_{k}^{j}|\,dt\leqslant \lim_{\eps\to 0} (I_{1}+I_{2}+I_{3}),
\]
where the terms $I_{1}$, $I_{2}$, $I_{3}$ are given by
\begin{equation*}
I_{1}=\int\limits_{0}^{\tau}\Big|\int\limits_{D}\phi^{j} \cdot \curl \diverg (\nabla u_{\eps}\otimes \nabla u_{\eps})\,dx+\partial_{a_{k}}W(\vortexvector(t))^{j}\Big|\,dt,
\end{equation*}

\begin{equation*}
I_{2}=\int\limits_{0}^{\tau}\Big|\int\limits_{D}k_{\eps}(\curl \phi^{j},\F\cdot  (\nabla u_{\eps}\otimes \nabla u_{\eps}))\,dx-\pi F^{j}(a_{k}(t), t)\Big|\,dt,
\end{equation*}

\begin{equation*}
I_{3}=\int\limits_{0}^{\tau}\Big|\int\limits_{D}(\curl \phi^{j}, i\gG\cdot J( u_{\eps}))\,dx-\pi d_{k}(i\gG)^{j}(a_{k}(t), t)\Big|\,dt.
\end{equation*}

We estimate them one by one. 

By the choice of $\pphi$, 
$\partial_{a_{k}}W(\vortexvector(t))^{j}=\curl\phi^{j}\cdot \partial_{a_{k}}W(\vortexvector(t))
$
in $B_{\sigma}(a_{k}^{0})$.
Therefore, by Propositions \ref{prop:GradOfW} and \ref{prop:DiffFluidTensorAndCHM}, we have
\begin{multline*}
I_{1}=\int\limits_{0}^{\tau}\Big|\int\limits_{D}(\grad \curl \phi^{j}):((\nabla u_{\eps}\otimes \nabla u_{\eps})-(\jj(u_{*}(\vortexvector))\otimes \jj(u_{*}(\vortexvector))))\Big|\\
\leqslant C \int\limits_{0}^{\tau} D_{\eps}(t)+ |\eeta(t)| +o(1).
\end{multline*}
For the term $I_{2}$ we obtain, by Proposition \ref{prop:convergence of the stress tensor} and the smoothness of $\F$, that
\[I_{2}=\pi\int\limits_{0}^{\tau}\Big|F^{j}(\xi_{k}(t), t)-F^{j}(a_{k}(t),t)\Big|+o(1)\leqslant C\int\limits_{0}^{\tau}|\eta_{k}|+o(1).
\]

Analogously, by Proposition \ref{prop:Compactness of the Jacobian} we have that
\[I_{3}=\pi\int\limits_{0}^{\tau}\Big|(i\gG)^{j}(\xi_{k}(t),t)-(i\gG)^{j}(a_{k}(t),t)\Big|\,dt+o(1)\leqslant C\int\limits_{0}^{\tau}|\eta_{k}|\, dt+o(1).
\]
We sum over $j$ and $k$ to see that
\[\int\limits_{0}^{\tau}| \rR|\,dt\leqslant \lim_{\eps\to0}C\int\limits_{0}^{\tau}|\eeta(t)|+D_{\eps}(t)\, dt.
\]
We combine this inequality with \eqref{GronwallOneDir}. At $t=0$, $\eeta(0)=0$ and $D_{\eps}(0)=o(1)$. With the Gronwall Lemma we conclude that $\eeta(t)=0$ and $D_{\eps}(t)=o(1)$ for all $t\in[0,\tau]$. The first identity establishes the vortex motion law. The second shows that the solution remains well-prepared on $[0,\tau]$.
\qquad\end{proof}

\begin{proof}[Proof of Theorem~\ref{ThmGLDet}, completed]
We iterate our argument starting at the time $\tau$, which is found in Proposition \ref{prop:short-time}. We apply successively Propositions \ref{prop:kinetic_energy} -- \ref{prop:short-time} on a new time-interval $[\tau, \tau+\bar{\tau}]$ with $\bar{\tau}>0$. It remains to check that we can reach, by iterating so, the terminal time $T^{*}$. 

We argue by contradiction. Let $\tilde {T}$ be the supremum over all times for which Theorem~\ref
{ThmGLDet} holds. By Proposition \ref{prop:short-time}, $\tilde {T}$ is positive. Suppose that $\tilde {T}<T^{*}$. We shall show that we can actually apply Proposition \ref{prop:short-time} at the time $\tilde {T}$. This will contradict the assumption on maximality of $\tilde {T}$. 

We have to prove that $\xi_{k}(\tilde {T})=a_{k}(\tilde {T})$ and $D_{\eps}(\tilde {T})=o(1)$. For every $T\in [0, \tilde {T})$,  \eqref{eq:EnergyEvolution} gives
\begin{equation}\label{eq:EnergyFinal}
\lambda_{\eps}\int\limits_{0}^{T}\int\limits_{D}|\partial_{t}u_{\eps}|^{2}=E_{\eps}(u_{\eps}^{0})-E_{\eps}(u_{\eps}(T))
-k_{\eps}\int\limits_{0}^{T}\int\limits_{D}(\F, \pp(u_{\eps}))
+\int\limits_{0}^{T}\int\limits_{D}(\gG, \V(u_{\eps})).
\end{equation}
Using Propositions \ref{prop:short-time}, \ref{prop:convergence of the momentum}, and \ref{prop:Compactness of the Jacobian}, we see that the right-hand side is equal to 
\[ W(\vortexvector^{0})-W(\vortexvector(T))-\pi\sum\limits_{k=1}^{N}\int\limits_{0}^{T}(\F(a_{k}(t), t),\dot{a}_{k})-\pi\sum\limits_{k=1}^{N}d_{k}\int\limits_{0}^{T}(i\gG(a_{k}(t), t), \dot{a}_{k})+o(1).
\]
The paths $a_{k}$ remain distinct for $T=\tilde{T}$ because $\tilde{T}<T^{*}$. Thus, the quantity above remains bounded. This means that
$\lambda_{\eps}\smallint_{0}^{\tilde{T}}\smallint_{D}|\partial_{t}u_{\eps}|^{2}\leqslant C$
.
By Theorem~3 of \cite{SSProd}, the functions $\xi_{k}$ are continuous on the \textit{closed} interval $[0,\tilde{T}]$. Since $\vortexvector(T)$ is equal to $\xxi(T)$ on $[0, \tilde{T})$, and both collections of functions are continuous, we obtain $\vortexvector(\tilde{T})=\xxi(\tilde{T})$.

We now consider \eqref{eq:EnergyFinal} on the interval $[T, \tilde T]$ instead of $[0, T]$. Estimating the integrals with $\F$ and $\gG$ as before, we get
\[E_{\eps}(u_{\eps}(\tilde{T}))\leqslant E_{\eps}(u_{\eps}(T))-\pi\sum\limits_{k=1}^{N}\int\limits_{0}^{T}(\F(a_{k}(t), t)+d_{k}i\gG(a_{k}(t), t), \dot{a}_{k})+o(1).\] 
By the well-preparedness of $(u_{\eps})$ at all times $T<\tilde{T}$, we have that
\[E_{\eps}(u_{\eps}(T))=\pi N \log(1/\eps)+N\gamma+W(\vortexvector(T))+o(1).
\]
For each $k\in\{1,..., N\}$, we have by \eqref{eq:mobility bound} and the regularity of $\F$ and $\gG$ that
\[
\big|\int\limits_{T}^{\tilde{T}}(\F(a_{k})+d_{k}i\gG(a_{k}), \dot{a}_{k})\big|
\leqslant C\big(\int\limits_{T}^{\tilde{T}}|\gG(a_{k})|^{2}+|\F(a_{k})|^{2}\big)^{\tfrac{1}{2}}\cdot\big(\int\limits_{T}^{\tilde{T}}|\dot{a}_{k}|^{2}\big)^{\tfrac{1}{2}}
= C(\tilde{T}-T)^{\tfrac{1}{2}}.
\]

This gives 
\[E_{\eps}(u_{\eps}(\tilde{T}))\leqslant \pi N \log(1/\eps)+N\gamma+W(\vortexvector(T))+C(\tilde{T}-T)^{\tfrac{1}{2}}+o(1).
\]
Since $W(\vortexvector(T))\to W(\vortexvector(\tilde{T}))$ for $T\to \tilde{T}$, we obtain that
\[\lim\limits_{\eps\to 0}D_{\eps}(\tilde{T})\leqslant \lim\limits_{\eps\to 0}\lim\limits_{T\to \tilde{T}}(W(\vortexvector(T))+C(\tilde{T}-T)^{\tfrac{1}{2}}+o(1)-W(\vortexvector(\tilde{T})))=0.
\]
The proof is now complete.
\qquad\end{proof}

\begin{appendix}
\section{Equipartition of the Ginzburg-Landau energy}
\begin{proposition}[\cite{KurzkeSpirnEquipartition}, Theorem~1 and \cite{KurzkeVortexLiquids}, Proposition 15]\label{KurzkeSpirn}
Suppose that for some $\sigma>0$, we have
\[\lVert J(u_{\eps})-\pi\delta_{0}\rVert_{\left(C_{c}^{0,1}(B_{\sigma})\right)^{*}}\leqslant \frac{\sigma}{4}
\]
and 
\[\int\limits_{B_{\sigma}}e_{\eps}(u_{\eps})\, dx\leqslant \pi\log\frac{\sigma}{\eps}+K_{0}.
\]
Then there holds 
$\Big| \int\limits_{B_{\sigma}}k_{\eps}(\nabla u_{\eps}\otimes \nabla u_{\eps})-\pi\Id dx\Big|\leqslant K_{1}\sqrt{k_{\eps}}$,
with $K_{1}$ depending continuously on $K_{0}$.  
\end{proposition}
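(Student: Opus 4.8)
The statement is \cite{KurzkeSpirnEquipartition}, Theorem~1, together with \cite{KurzkeVortexLiquids}, Proposition~15; here I only indicate the strategy. The guiding principle is that the logarithmically divergent part of the Dirichlet energy sits in the \emph{trace} of $\nabla u_\eps\otimes\nabla u_\eps$ and is distributed isotropically, while the traceless symmetric part is only of order one. The plan is therefore to split $\int_{B_\sigma}k_\eps(\nabla u_\eps\otimes \nabla u_\eps)$ into its trace and its traceless symmetric part and estimate the two separately.

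First I would record the consequences of the two hypotheses. The bound on $\lVert J(u_\eps)-\pi\delta_0\rVert_{(C_c^{0,1}(B_\sigma))^*}$ forces $u_\eps$ to carry a net vorticity $\pi\delta_{x_\eps}$ with a single core $x_\eps\in B_{\sigma/2}$, so the standard vortex-ball lower bounds (see e.g.\ \cite{JerrardSoner}) give $\int_{B_\sigma}\tfrac12|\nabla u_\eps|^2\geqslant \pi\log(\sigma/\eps)-C$ with $C=C(K_0)$ locally bounded. Subtracting this from the energy upper bound shows that the excess $\Sigma_\eps:=\int_{B_\sigma}e_\eps(u_\eps)-\pi\log(\sigma/\eps)$ satisfies $|\Sigma_\eps|\leqslant C(K_0)$ and that $\tfrac{1}{\eps^2}\int_{B_\sigma}(1-|u_\eps|^2)^2\leqslant C(K_0)$. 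Hence
\[
k_\eps\int_{B_\sigma}|\nabla u_\eps|^2=2k_\eps\int_{B_\sigma}e_\eps(u_\eps)-k_\eps\tfrac{1}{2\eps^2}\int_{B_\sigma}(1-|u_\eps|^2)^2=2\pi+O(k_\eps),
\]
which settles the trace with an error even of order $k_\eps$.

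For the traceless part I would invoke the refined Jacobian estimates of \cite{JerrardSpirnRefined} together with a clearing-out argument: since $\Sigma_\eps=O(1)$, there is $\ell=\ell(K_0)$ with $|u_\eps|\geqslant \tfrac12$ on $B_\sigma\setminus B_{\ell\eps}(x_\eps)$ and, with $u_*$ the canonical harmonic map of the single vortex $(x_\eps,1)$ in $B_\sigma$ carrying the boundary phase of $u_\eps$,
\[
\int_{B_\sigma\setminus B_{\ell\eps}(x_\eps)}|\nabla(u_\eps-u_*)|^2\,dx\leqslant C(K_0).
\]
On $B_\sigma\setminus B_{\ell\eps}(x_\eps)$ one has $\int \nabla u_*\otimes\nabla u_*=\pi\log\bigl(\tfrac{\sigma}{\ell\eps}\bigr)\Id+O(1)$: the only divergent contribution comes from the rotationally symmetric field $\nabla\arg(x-x_\eps)$, whose tensor square has vanishing traceless part by angular symmetry, while the regular harmonic part of $u_*$ and the mismatch between $B_\sigma$ and a disk centred at $x_\eps$ are bounded by $|B_\sigma|$ times bounded sup norms. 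Expanding $\nabla u_\eps\otimes\nabla u_\eps-\nabla u_*\otimes\nabla u_*$ bilinearly and using Cauchy--Schwarz with $\int_{B_\sigma\setminus B_{\ell\eps}(x_\eps)}|\nabla u_*|^2\leqslant C/k_\eps$ gives
\[
\Bigl|\int_{B_\sigma\setminus B_{\ell\eps}(x_\eps)}\bigl(\nabla u_\eps\otimes\nabla u_\eps-\nabla u_*\otimes\nabla u_*\bigr)\Bigr|\leqslant C(K_0)+C(K_0)/\sqrt{k_\eps}.
\]
Multiplying by $k_\eps$ makes this $O(\sqrt{k_\eps})$; the core contributes $k_\eps\int_{B_{\ell\eps}(x_\eps)}|\nabla u_\eps|^2\leqslant C(K_0)\,k_\eps=o(\sqrt{k_\eps})$, since the energy inside $B_{\ell\eps}(x_\eps)$ is $O(1)$. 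Combining the trace estimate, the traceless estimate, and the $\Id$-valued main term $k_\eps\pi\log(\tfrac{\sigma}{\ell\eps})\Id\to\pi\Id$ yields $\int_{B_\sigma}k_\eps(\nabla u_\eps\otimes\nabla u_\eps)=\pi\Id+O(\sqrt{k_\eps})$, with $K_1$ assembled from the constants above and hence continuous in $K_0$.

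The main obstacle is the second displayed inequality: that the energy excess controls, uniformly and continuously in $K_0$, the $H^1$ distance from $u_\eps$ to the canonical harmonic map outside a core of size $O(\eps)$. This is precisely the refined-Jacobian / clearing-out input, and it is the only substantial ingredient; granted it, the rest is the elementary bilinear bookkeeping sketched above. This is why \cite{KurzkeSpirnEquipartition,KurzkeVortexLiquids} state the equipartition estimate as a black box, and we follow suit.
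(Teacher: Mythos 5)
The paper offers no proof of this proposition: it is stated in the appendix as an imported result, quoted verbatim from \cite{KurzkeSpirnEquipartition}, Theorem~1 and \cite{KurzkeVortexLiquids}, Proposition~15, so there is nothing internal to compare against. Your sketch faithfully reproduces the strategy of those references --- the trace handled by playing the energy upper bound against the vortex-ball lower bound, the traceless part by comparison with the canonical harmonic map --- and correctly isolates the excess-controlled $H^1$ closeness of $u_{\eps}$ to $u_{*}$ (the refined Jacobian estimates of \cite{JerrardSpirnRefined}) as the one non-elementary input, so deferring to the cited works for that step is exactly what the paper itself does.
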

\end{appendix}

\textbf{Acknowlegements.} This paper is part of the author\rq{}s PhD Thesis. The author wishes to thank her advisor Professor Christof Melcher for pointing out the problem and many fruitful discussions. The author was partially supported by the RWTH Research Fellowship Program Russia.

\end{document}